\definecolor{mygreen}{rgb}{0,0.6,0}
\definecolor{mygray}{rgb}{0.5,0.5,0.5}
\definecolor{mymauve}{rgb}{0.58,0,0.82}
\tiny\color{mygray}, 
\newtheoremstyle{custom}{3pt}{3pt}{}{}{\bfseries}{:}{.5em}{}
\theoremstyle{custom}
\newtheorem{example}    {Example}
\newtheorem{theorem}    [example]{Theorem}
\newtheorem{lemma}      [example]{Lemma}
\newtheorem{remark} 		[example]{Remark}
\newcommand{\R}{\mathbb{R}}
\newcommand{\N}{\mathbb{N}}
\newcommand{\X}{\mathbb{X}}
\newcommand{\dd}{\mathrm{d}}
\newcommand{\set}[1]{\mathbb{#1}}		
\newcommand{\mat}[1]{\mathbf{#1}}		
\newcommand{\eps}{\varepsilon}
\renewcommand{\P}{\mathcal{P}}
\newcommand{\T}{\mathcal{T}}
\newcommand{\F}{\mathcal{F}}
\newcommand{\U}{{U}}
\newcommand{\Dep}{D_{\eps}}
\newcommand{\cDep}{\mathcal{D}_{\eps}}
\def \prob {\mathsf{P}}
\def \expect {\mathsf{E}}
\numberwithin{equation}{section}
\begin{document}

\title{
Understanding the geometry of transport:\\
diffusion maps for Lagrangian trajectory data unravel coherent sets
}
\author{Ralf~Banisch\thanks{School of Mathematics, University of Edinburgh, Edinburgh EH9 3FD, UK. E-mail: ralf.banisch@ed.ac.uk}
\and P\'eter~Koltai\thanks{Institute of Mathematics, Freie Universit\"at Berlin, 14195 Berlin, Germany. E-mail: peter.koltai{@}fu-berlin.de} }

\date{}

\lstset{language=Matlab}  


\maketitle

\begin{abstract}
Dynamical systems often exhibit the emergence of long-lived coherent sets, which are regions in state space that keep their geometric integrity to a high extent and thus play an important role in transport. In this article, we provide a method for extracting coherent sets from possibly sparse Lagrangian trajectory data. Our method can be seen as an extension of diffusion maps to trajectory space, and it allows us to construct ``dynamical coordinates'' which reveal the intrinsic low-dimensional organization of the data with respect to transport. The only a priori knowledge about the dynamics that we require is a locally valid notion of distance, which renders our method highly suitable for automated data analysis. We show convergence of our method to the analytic transfer operator framework of coherence in the infinite data limit, and illustrate its potential on several two- and three-dimensional examples as well as real world data.
\end{abstract}


\section{Introduction} \label{sec:intro}

The term \emph{coherent sets}, as used here, was coined in recent studies~\cite{FrLlQu10, FrLlSa10, FrSaMo10}. They are understood to be sets (one at each time point), in the state space of a flow governed by a possibly non-autonomous (time-variant) system, which keep their geometric integrity to a high extent, and allow little transport in and out of themselves. Natural examples are moving vortices in atmospheric~\cite{RypEtAl07,FrSaMo10}, oceanographic~\cite{TBBM03,DFHPG09,FrEtAl15}, and plasma flows~\cite{PHJJ07}.

Dynamical systems techniques have been developed for the qualitative and quantitative study of transport problems. Most of these are either geometric or transfer operator based (probabilistic) methods, but topological~\cite{AlTh12} and ergodicity-based~\cite{BuMe12} methods appeared recently as well.
Geometric approaches are mainly aiming at detecting transport barriers (\emph{Lagrangian coherent structures}), and include studying invariant manifolds, lobe dynamics~\cite{RKWi90}, finite-time material lines~\cite{Hal00}, geodesics~\cite{HaBeV12} and surfaces~\cite{Hal01}. The notions of shape coherence~\cite{MaBo14} and flux optimizing curves~\cite{BaFrSa14}  are also of geometric nature. Transfer operator based methods aim at detecting sets (i.e.\ full-dimensional objects in contrast to codimension one transport barriers), and consider almost-invariant~\cite{DeJu99, DeFr03} and coherent sets~\cite{FrLlQu10, FrLlSa10, FrSaMo10}. Efforts have been made to compare geometric and probabilistic methods and understand the connection between them~\cite{FrPa09, FrPa12, Froyland2015, AlPe15}.

Increasing computational and storage capacities, just as improving measurement techniques supply us with large amounts of data. Even if a tractable computational model is not available, analysis of this data can reveal much of the desired properties of the system at hand. Recently, different approaches emerged that compute coherent sets and coherent structures based on \emph{Lagrangian trajectory data}, such as GPS coordinates from ocean drifters: Ser-Giacomi et al~\cite{SGetal15} use graph theoretical tools to perform a geometric analysis of transport, the works~\cite{FrPa15,HaEtAl15} introduce dynamical distances and clustering to extract coherent sets as tight bundles of trajectories in space-time, while Williams et al~\cite{WRR15} use a meshfree collocation-based approach for a transfer operator based classification. 

Here, we introduce a method based on Lagrangian trajectory data, which (i) uses only \emph{local} distances between the data points, and which (ii) can be shown to ``converge'' to the \emph{analytical} transfer operator based framework of Froyland~\cite{Froyland2013} in the infinite-data limit; hence it can be viewed as a natural extension of the functional analytic framework to the sparse data case. Moreover, our approach provides \emph{dynamical coordinates} which shed light on the connectivity of coherent sets, and reveal how transport is occurring. One key ingredient here is to use \emph{diffusion maps}~\cite{coifman2006,nadler2006diffusion,lafon2006diffusion}, which were successfully applied to extract intrinsic geometrical properties from high-dimensional data. The basic idea there is to introduce a diffusion operator on the data points, whose eigenvectors will give a good low-dimensional parametrization of the data set, if this is possible.

This paper is organized as follows. In section~\ref{sec:cohset} we introduce the analytic transfer operator based framework of coherent sets. In section~\ref{sec:dmaps}, we first review the construction of diffusion maps. This is followed by our main result: The extension of diffusion maps to trajectory data and Theorem \ref{thm:main}, which shows that our method coincides with the analytic transfer operator approach in the rich data limit. We also discuss algorithmic aspects and show how to extract coherent sets. In~\cite{Froyland2013} Froyland draws a connection between the analytic transfer operator approach and geometric properties of coherent sets, which he formalizes in~\cite{Froyland2015}. In section~\ref{sec:epsfree} we seek \emph{direct} connections between our data-based framework and this geometry-oriented construction. Finally, section~\ref{sec:numerics} demonstrates our method for diverse numerical examples.

In this paper, we denote sets by double-stroke symbols~(e.g.~$\set{A}$), matrices whose size is compatible with the data by upper case bold face symbols~(e.g.~$\mat{P}$), and operators on (weighted)~$L^2$-spaces by calligraphic symbols~(e.g.~$\mathcal{P}$). $\|\cdot \|$ is always the Euclidean norm on~$\R^n$, for some~$n\in\N$, unless stated otherwise.


\begin{figure}[h]
\centering

\includegraphics[width = 0.49\textwidth]{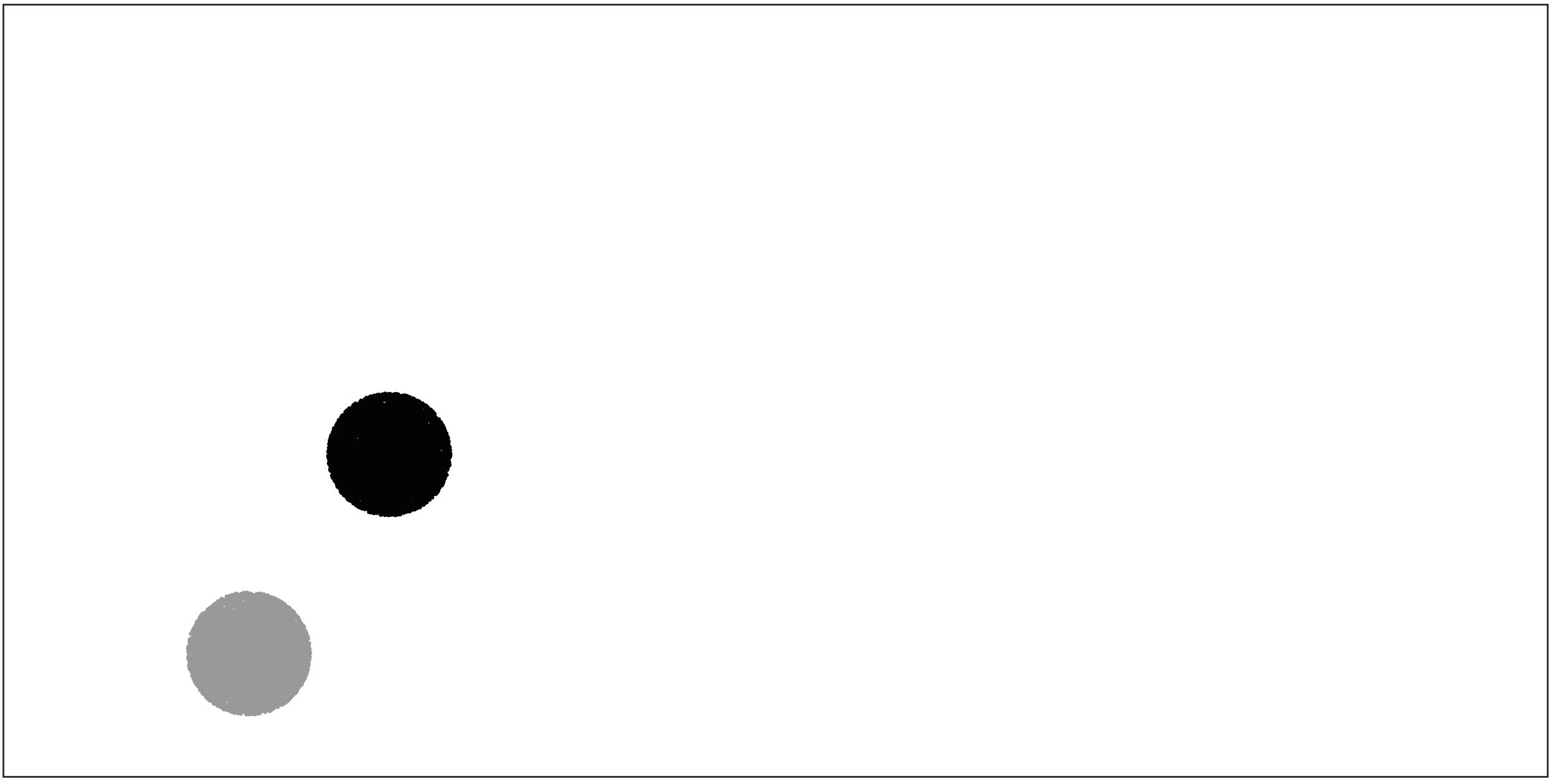}
\hfill
\includegraphics[width = 0.49\textwidth]{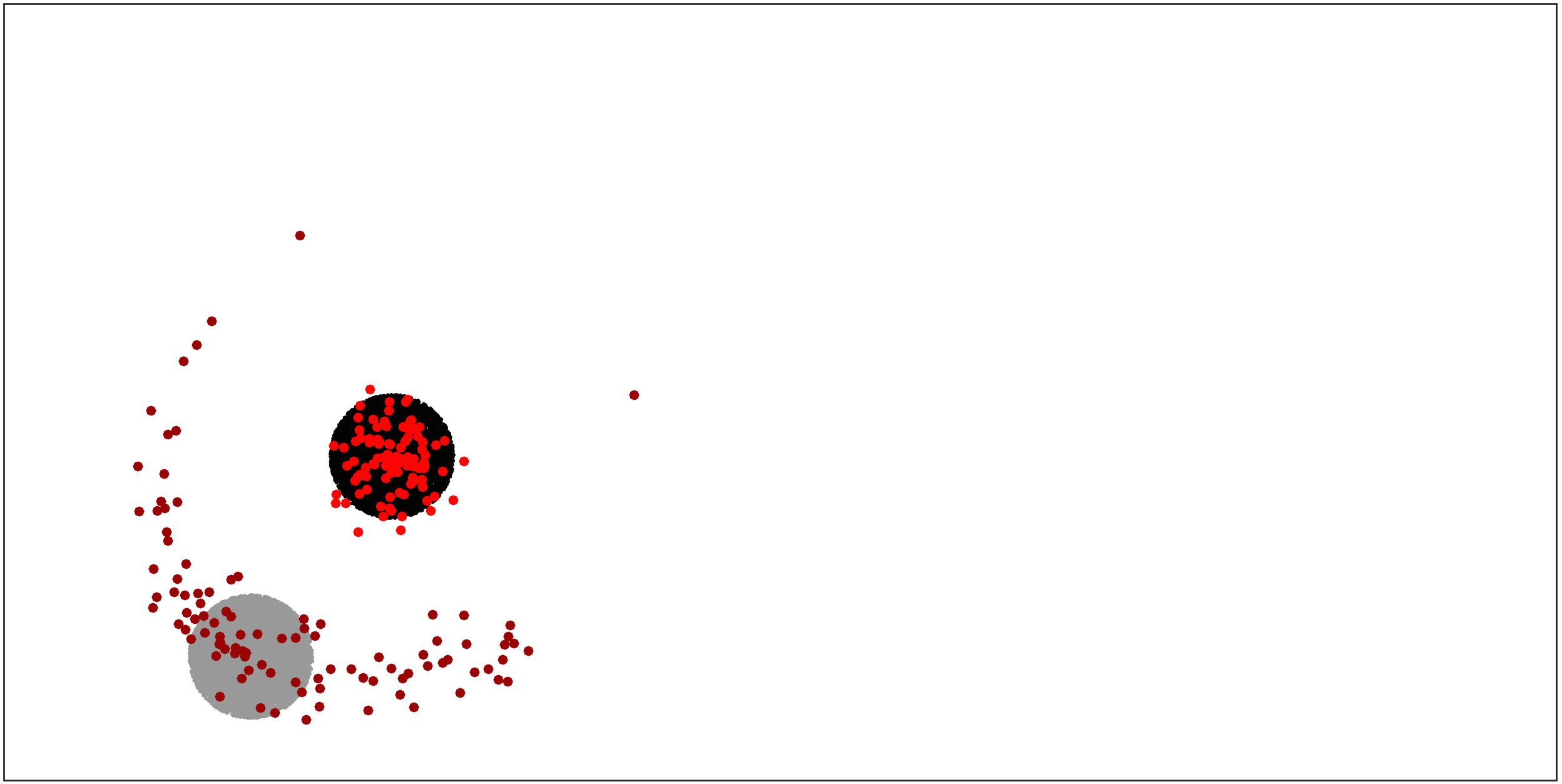}

\includegraphics[width = 0.49\textwidth]{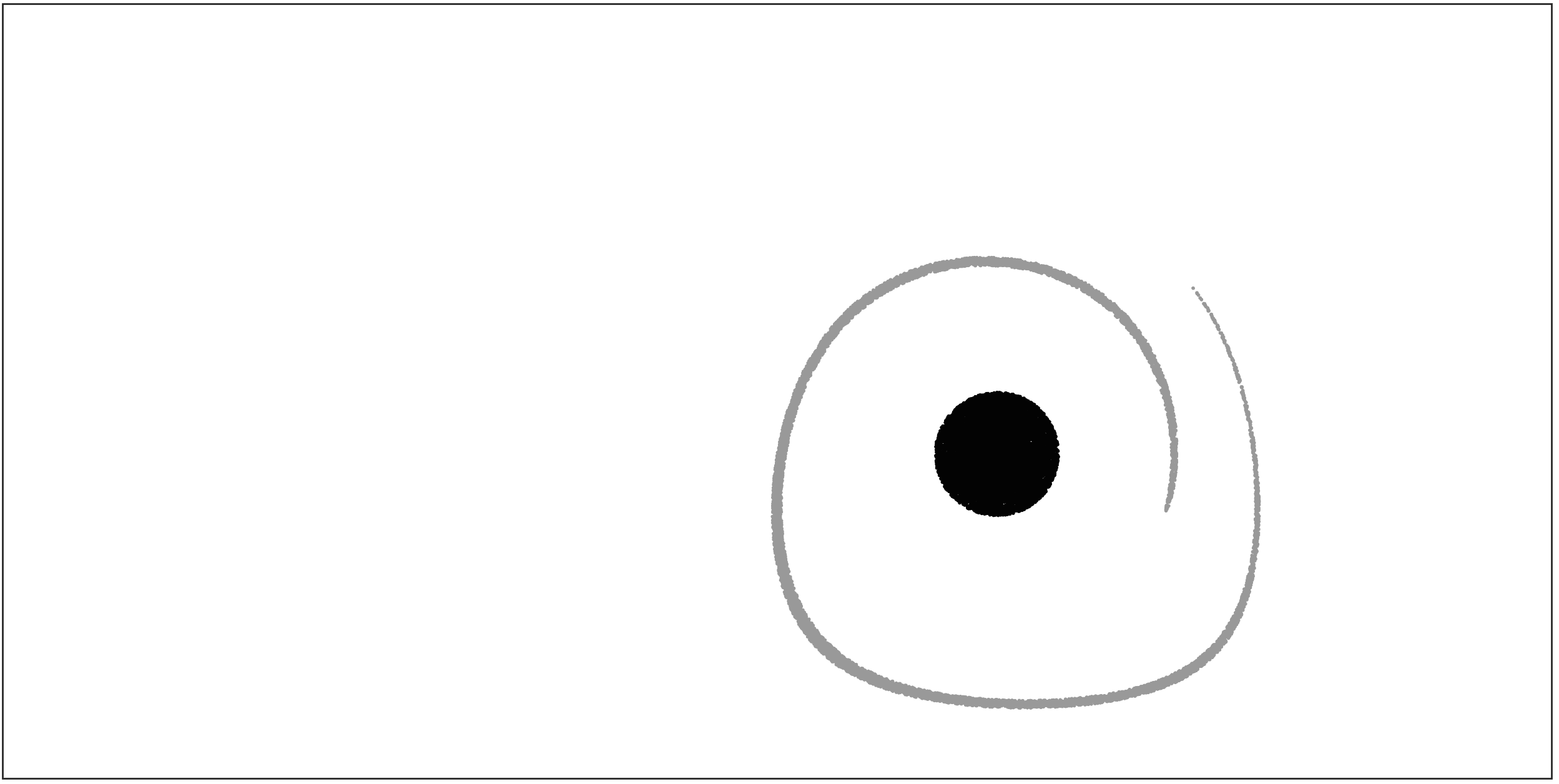}
\hfill
\includegraphics[width = 0.49\textwidth]{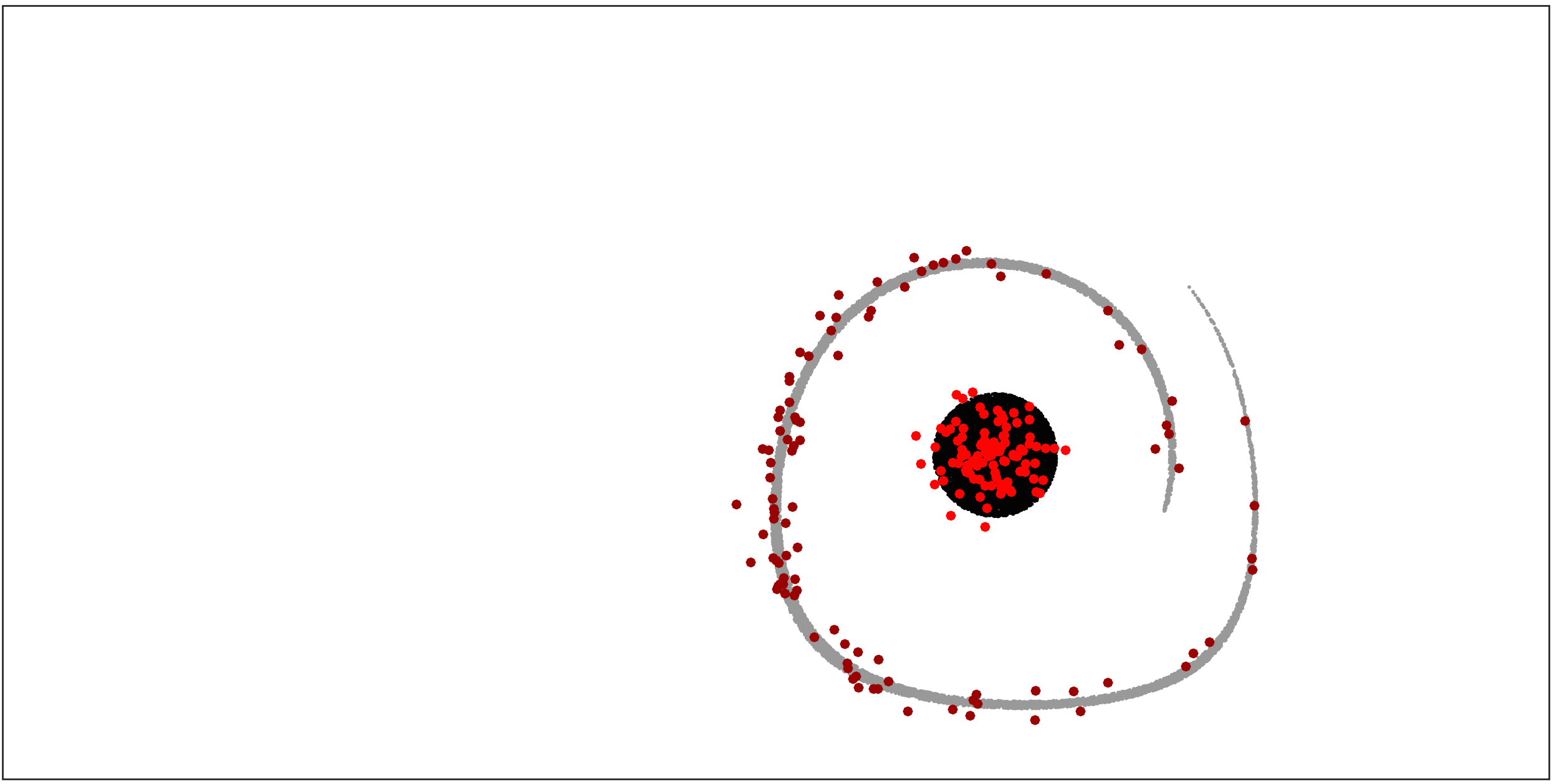}
\caption{Coherent pairs are robust under small perturbations. Top left: two sets at initial time. Bottom left: the image of these two sets under the dynamics. Bottom right: the images of 100 random test points (taken in the respective sets at initial time) under the dynamics, perturbed by a small additive random noise. Top right: preimages of the perturbed image points under the dynamics, which reveal that the black set and its image under the dynamics form a coherent pair, whereas the grey set and its image do not.}
\label{fig:forward_backward_points}
\end{figure}

\section{The analytic framework for coherent sets} \label{sec:cohset}

In this section we quantitatively formalize what we mean by a coherent set. To this end we review Froyand's analytic framework~\cite{Froyland2013} for \emph{coherent pairs}. In particular, we make a small simplifying modification to this, which we will comment on in (d) below.


Let a map~$\Phi:\set{X}\to \set{Y}$ be given, describing the evolution of states under the dynamics from some initial to some final time. We assume~$\set{X},\set{Y}\subset\R^d$, $d\in\N$, to be bounded sets. Consider a pair of sets,~$\set{A}\subset \set{X}$ at initial and $\set{B}\subset \set{Y}$ at final time. In order for $\set{A}$ and $\set{B}$ to form a \emph{coherent pair}, we must have $\Phi\set{A}\approx \set{B}$. This is however not enough, as Figure \ref{fig:forward_backward_points} readily suggests: If we are to distinguish between sets that keep their geometric integrity under the dynamics to a high degree and sets which do not, then we additionally need a robustness property under small perturbations.

Let~$\Phi_{\eps}:\set{X}\to \set{Y}$ denote a small random perturbation of~$\Phi$. The meaning of this is made precise below, for now, one may think of~$\Phi_{\eps} x$ as~$\Phi x$ plus some zero-mean noise with variance~$\eps$, where~$\eps$ is small. Now, a coherent pair has to satisfy~``$\Phi_{\eps}\set{A}\approx \set{B}$'', and~``$\Phi_{\eps}^{-1}\set{B} \approx \set{A}$'' in a suitable sense. In particular,~$\set{A}\subset \set{X}$ can be part of a coherent pair only if~``$\Phi_{\eps}^{-1}\left(\Phi_{\eps} \set{A}\right)\approx \set{A}$''. Note that exactly this is depicted in Figure~\ref{fig:forward_backward_points}, if we start at the top left image and proceed counterclockwise: applying forward dynamics, then diffusion, then the backward dynamics to the points of a coherent set, most of these points should return to the set.

In formalizing the expression~$\Phi_{\eps}^{-1}\left(\Phi_{\eps} \set{A}\right)\approx \set{A}$, \emph{randomness} plays an important role.
We define a non-deterministic dynamical system~$\Psi:\set{X}\to \set{Y}$ by its \emph{transition density function} $k\in L^2(\set{X}\times \set{Y},\mu\times\ell)$. Here~$L^2$ denotes the usual space of square-integrable functions in the Lebesgue sense,~$\mu$ is a probability measure (some reference measure of interest), and~$\ell$ is the Lebesgue measure. We have for the probability that~$\Psi x\in \set{S}$ for some Lebesgue-measurable set~$\set{S}$, that
\begin{equation}
\prob[\Psi x\in \set{S}] = \int_{\set{S}} k(x,y)\,dy\,.
\label{eq:transdens}
\end{equation}
In particular, (i)~$k\ge 0$ almost everywhere; and (ii)~$\int k(\cdot,y)dy = \mathbf{1}$, the constant one function.\footnote{If the range of an integral is not specified, then it is meant to be the whole domain of the integrand.}
From~\eqref{eq:transdens} we can compute that if~$\Psi x = \Phi_{\eps} x := \Phi x  + \sqrt{\eps}\bm\eta$, where~$\eps>0$, and~$\bm\eta$ is a random variable with density~$h$ with respect to~$\ell$, then~$k(x,y) = h\left(\eps^{-1/2}\left(\Phi x -y\right)\right)$.

We introduce the \emph{forward operator}~$\F:L^2(\set{X},\mu)\to L^2(\set{Y},\ell)$ associated with~$\Psi$, by~$\F f = \int k(x,\cdot)f(x)d\mu(x)$.
The operator~$\F$ describes how an ensemble of states which has distribution~$f(x)d\mu(x)$ is mapped by the dynamics; i.e.~$\F f$ is the distribution (given as a density with respect to~$\ell$) of the ensemble after it has been mapped state-by-state by~$\Psi$.

It is not necessary for the initial distribution to be stationary, thus we normalize our transfer operator.\footnote{We call every operator, transporting some object (a distribution, or an observable) by the dynamics, a \emph{transfer operator}.} Let $q_{\nu}:=\F \mathbf{1}$ be the image density of the initial distribution, defining a measure~$\nu$ through~$d\nu(x) = q_{\nu}(x)dx$. The normalized forward operator~$\T:L^2(\set{X},\mu)\to L^2(\set{Y},\nu)$ is then defined as
\begin{equation}
\T f = (\F\mathbf{1})^{-1}\F f = \int \frac{k(x,\cdot)}{q_{\nu}(\cdot)}f(x)d\mu(x)\,,
\label{eq:Tf}
\end{equation}
and its adjoint~$\T^*:L^2(\set{Y},\nu)\to L^2(\set{X},\mu)$ turns out to be
\begin{equation}
\T^* g = \int \frac{k(\cdot,y)}{q_{\nu}(y)}g(y)\,d\nu(y) = \int k(\cdot,y)g(y)\,dy \,,
\label{eq:T*f}
\end{equation}
i.e.\ $\langle \T f,g\rangle_{\nu} = \langle f,\T^* g\rangle_{\mu}$ for every $f\in L^2(\set{X},\mu)$, $g\in L^2(\set{Y},\nu)$, where~$\langle\cdot,\cdot\rangle_{\mu}$ and~$\langle\cdot,\cdot\rangle_{\nu}$ are the usual inner products in the respective spaces. Note that $\T \mathbf{1} = \mathbf{1}$, which just encodes the fact that the initial reference distribution~$\mu$ is mapped onto the final distribution~$\nu$ by the dynamics. Now, if~$\T$ is associated with~$\Phi_{\eps}$, then~$\set{A}$ and~$\set{B}$ being a coherent pair reads as~$\T\mathbf{1}_{\set{A}} \approx \mathbf{1}_{\set{B}}$. Note that this approximation can be made quantitative, since~$\tfrac{1}{\mu(\set{A})}\langle\T\mathbf{1}_{\set{A}},\mathbf{1}_{\set{B}}\rangle_{\nu}$ is the probability that a~$\mu$-distributed initial state from~$\set{A}$ gets mapped by~$\Psi$ into~$\set{B}$. Furthermore, $\T^*$ is the forward operator of the \emph{time-reversed} dynamics (see Appendix \ref{sec:adjoint} for a short proof). 

Froyland~\cite{Froyland2013} extracts coherent pairs from the left and right singular vectors of~$\T$ for dominant singular values. Right singular vectors of~$\T$ are eigenvectors of~$\T^*\T$.
Moreover,~$\T^*\T$ is the transfer operator of the ``forward-backward system''. Here, the ``backward system'' denotes the time-reversed forward system, and the forward system is described by the forward operator~$\T$. Coherent sets are those sets which are hard to exit under the forward-backward dynamics, i.e.~$\langle\T^*\T\tfrac{\mathbf{1}_{\set{A}}}{\mu(\set{A})},\mathbf{1}_{\set{A}}\rangle_{\mu}\approx 1$, which is the probability that the forward-backward system ends up in~$\set{A}$, provided it started there. This statement is a quantitative version of~``$\Phi_{\eps}^{-1}\left(\Phi_{\eps}\set{A}\right)\approx \set{A}$''.
Thus, the method described in Ref.~\cite{Froyland2013} is a \emph{spectral clustering}~\cite{von2007tutorial, Lambiotte09} of the forward-backward system.

A few remarks are in order:

(a). We see from~\eqref{eq:Tf} and~\eqref{eq:T*f} that
\begin{equation}
\T^*\T f(z) = \int f(x) \underbrace{\int \frac{k(z,y)k(x,y)}{q_{\nu}(y)}\,dy}_{=: \kappa(x,z)}\,d\mu(x)\,.
\label{eq:T*T}
\end{equation}
The kernel~$\kappa$ is trivially symmetric, but also doubly stochastic: $\int \kappa(x,\cdot)\,d\mu(x) = \linebreak \int \kappa(\cdot,z)\,d\mu(z) = \mathbf{1}$. Symmetry of~$\kappa$ implies that the forward-backward process is \emph{reversible} with respect to~$\mu$.

(b). If~$\Psi = \Phi$ is the deterministic dynamics, the forward operators~$\F$ and~$\T$ are often called the Perron--Frobenius operator~\cite{LaMa94}. In this case we denote the normalized forward operator by~$\P$. Note that here the kernel~$k(x,y) = \delta(\Phi x -y)$ is only formally an~$L^2$ function, where~$\delta$ is the \emph{Dirac distribution}, satisfying~$\delta(u)=0$ for~$u\neq 0$, and~$\int \delta(u)du=1$. By~\eqref{eq:T*f} we have~$\P^*g(y) = g(\Phi y)$, which is called the \emph{Koopman operator}. We will denote the formal Koopman operator by~$\U$, given by~$\U g(x) = g(\Phi x)$, to decouple its definition from the function spaces in consideration. However, it always holds true that if~$\U$ is considered as an operator from~$L^2(\set{Y},\nu)$ to~$L^2(\set{X},\mu)$, where~$\mu,\nu$ are arbitrary measures such that~$\U$ is well-defined, then the adjoint of the Koopman operator,~$\U^*$, is the Perron--Frobenius operator~$\P$ describing the dynamical transport of~$\mu$-densities to~$\nu$-densities. That is,~$\langle \P f,g\rangle_{\nu} = \langle f, Ug\rangle_{\mu}$ for all~$f\in L^2(\set{X},\mu)$,~$g\in L^2(\set{Y},\nu)$.

(c). We will also consider~$\Psi = \Phi_{\eps} = \Phi + \sqrt{\eps} \bm \eta$, where~$\bm\eta$ is a standard normally distributed\footnote{The distribution of~$\bm\eta$ is cut off at some specific distance from the mean, such that we can work on bounded sets. This might necessitate the enlargement of~$\set{Y}$, which we tacitly assume has been done, and denote the result by~$\set{Y}$ again.} random variable. This implies
\[
k(x,y) = \frac{1}{Z_{\eps}}\exp\left(-\eps^{-1}\|\Phi x-y\|^2\right)=: \frac{1}{Z_{\eps}}k_{\eps}(\Phi x,y)\,,
\]
with~$Z_{\eps}$ being a normalizing constant, independent on~$x$, and~$\|\cdot\|$ being the Euclidean norm on~$\R^d$. The noisy dynamics~$\Psi$ is given by two steps (\emph{first} apply~$\Phi$, \emph{then} add noise), hence the associated forward operator is a concatenation of the forward operators of the two components:~$\T = \cDep\P$. Here,~$\P:L^2(\set{X},\mu)\to L^2(\set{Y},\nu_{\Phi})$ is the normalized Perron--Frobenius operator, where~$\nu_{\Phi}$ is the image of the distribution~$\mu$ under~$\Phi$. The diffusion operator~$\cDep:L^2(\set{Y},\nu_{\Phi})\to L^2(\set{Y},\nu)$ is the normalized forward operator of the noise, where~$\nu$ is the image of~$\nu_{\Phi}$ under the noise. If we denote~$q_{\nu}$ the density of~$\nu$ with respect to~$\ell$, i.e.~$d\nu(x) = q_{\nu}(x)dx$, then
\begin{equation*}
\cDep f(x) = \frac{1}{Z_{\eps}q_{\nu}(x)}\int k_{\eps}(x,y)f(y)d\nu_{\Phi}(y)\,.
\end{equation*}
The evaluation chain can now be represented as
\[
\T: L^2(\set{X},\mu) \stackrel{\P}{\longrightarrow} L^2(\set{Y},\nu_{\Phi}) \stackrel{\cDep}{\longrightarrow} L^2(\set{Y},\nu)\,.
\]
For later reference, we also define the formal diffusion operator~$\Dep$ (i.e.\ without the spaces it acts on) by~$\Dep f(x) = \frac{1}{Z_{\eps}}\int k_{\eps}(x,y)f(y)\,dy$, and note that formally, due to the symmetry of the kernel~$k_{\eps}$, we have~$\langle \cDep f,g\rangle_{\nu} = \langle f,\Dep g\rangle_{\nu_{\Phi}}$. Hence, viewed as an operator between the right spaces,~$\Dep = \cDep^*$.

(d). In Froyland's construction~\cite{Froyland2013},~$\T$ is the forward operator associated with a process where a small diffusion is applied \emph{both before and after} the deterministic dynamics takes place.\footnote{In our setting, this would mean~$\Phi_{\eps}x = \Phi(x+\sqrt{\eps}\bm\eta_1) + \sqrt{\eps}\bm\eta_2$, where~$\bm\eta_1,\bm\eta_2$ are independent random variables.} This assures that both the sets~$\set{A}$ and~$\set{B}$ are geometrically nice (cf.~Figure~\ref{fig:forward_backward_points}). This can be circumvented as follows. If one would like to have coherence at several time instances, it is natural to average the operators~$\T^*\T$ for all the different time instances, and compute the dominant eigenfunctions of the resulting operator~\cite{FrPa14,Froyland2015}.\\
To make this precise, let for arbitrary time instances~$s \le t$, the forward operator~$\T_{s,t}$ correspond to the deterministic dynamics from~$s$ to~$t$, plus random noise scaled by a parameter~$\eps$ (as above). Given a set of final time instances,~$I_T:=\{t_0,\ldots,t_{T-1}\}$, at which we would like to find coherent sets (now \emph{tuples}, instead of pairs), one can consider the dominant eigenfunctions of
\begin{equation}
\frac{1}{T}\sum_{t\in I_T} \T_{t_0,t}^*\T_{t_0,t}\,.
\label{eq:avgT*T}
\end{equation}
Note that~$t_0\in I_T$ in~\eqref{eq:avgT*T}, hence we automatically account for the geometrical smoothness of the sets at initial time too. Since we will adopt this construction in the current work, it suffices to take~$\T$ as the forward operator associated with the deterministic forward dynamics plus some small diffusion (at final time). Our data-based construction in section~\ref{sec:main} is going to approximate the operator~\eqref{eq:avgT*T}.

(e). So far, the choice of the small random perturbation (diffusion) which we apply to the dynamics was arbitrary. In practice, choosing the size of this perturbation does not have to be obvious. In~\cite{Froyland2015}, Froyland hence developed an~``$\eps$-free'' version of the notion of coherent pairs. In fact, he derives a first order perturbation expansion for~$\eps\to 0$ for the construction from~(d).\\
We summarize this for coherent pairs, i.e.\ where only two time instances are involved, say~$s$ and~$t$, $s<t$. Note that then~$\T_{s,s} = \cDep\P_{s,s} = \cDep$ corresponds to only diffusion, since~$\P_{s,s}=\mathrm{Id}$, the identity. Froyland shows that if the deterministic dynamics is volume-preserving for every time, and the noise has zero mean and covariance equal to the identity matrix, then\footnote{Equation~\eqref{eq:Tepsfree} follows from Froyland's result if we take two differences into account: (i) we take~$\eps$ as the \emph{variance} of the noise, and not the \emph{standard deviation}, as he does, and (ii) we apply noise only after the dynamic evolution, and not before and after.}
\begin{equation}
\T_{s,s}^*\T_{s,s} + \T_{s,t}^*\T_{s,t} = \mathrm{Id} + \frac{\eps}{2} (\Delta + \P^*\Delta\P) + o(\eps)\,,
\label{eq:Tepsfree}
\end{equation}
where~$\Delta$ denotes the Laplace operator on~$\set{X}$, and~$o(\eps)$ means some function such that~$o(\eps)/\eps\to 0$ as~$\eps\to 0$. Equation~\eqref{eq:Tepsfree} holds pointwise in~$x$, if the operators therein are applied to a sufficiently smooth function~$f$. Froyland calls~$(\Delta + \P^*\Delta\P)$ the \emph{dynamic Laplacian}, and its eigenfunctions yield coherent pairs. We give a data-based version of \eqref{eq:Tepsfree} at the end of section \ref{sec:main}. In section~\ref{sec:epsfree} we further elaborate on data-based approximations of the dynamic Laplacian.

\section{Diffusion in trajectory space} \label{sec:dmaps}

\subsection{Diffusion maps}

To set the stage, we give a brief review of the method of diffusion maps. For details and the proofs of the statements presented in this section, we refer to \cite{coifman2006} and the references therein.

The goal of diffusion maps is to learn global geometric information from point-cloud data by imposing local geometric information only. Suppose that we have~$m$ data points~$x^i\in \R^n$ which are i.i.d.\ realizations of random variables distributed according to an unknown density~$q$ on a likewise unknown submanifold~$\set{M}\subset \R^n$ of dimension~$\dim\set{M} = d$. We assume throughout that $\set{M}$ is compact and $C^\infty$ and $q\in C^3(\set{M})$. The idea behind diffusion maps is that the Euclidean distance in~$\R^n$ is a good local approximation for distances in~$\set{M}$. 
Now, a Markov chain is constructed on the data points by the following procedure: Fix a rotation-invariant kernel\footnote{Due to the apparent connection between the kernels~$k_{\eps}$ from section~\ref{sec:cohset}~(c) and from~\eqref{eq:keps}, we abuse notation by denoting these objects by the same symbol. From now on, this latter definition applies.}

\begin{equation}
k_\eps(x^i,x^j) = h\left(\frac{\|x^i-x^j\|^2}{\eps}\right).
\label{eq:keps}
\end{equation}
Here, $\eps>0$ is a scale parameter and we will always choose $h(x) = c_r \exp(-x) \mathbf{1}_{x\leq r}$ with some cutoff radius~$r$ and the constant~$c_r$ chosen such that $\int h(\|x\|^2)\dd x = 1$. We will comment more on the choice of $\eps$ and $r$ below, but in practice we choose~$r$ large enough such that the second moment~$\int h(\|x\|)x_1^2\dd x \approx 1/2$ up to reasonable precision.\footnote{This is in order to have an explicit factor $\tfrac14$ in~\eqref{eq:Dmaps_eps_convergence}. For our work here, the exact value of this factor is irrelevant; we assume in our theoretical considerations that the second moment of~$h$ is~$\frac12$.}
Now, we let
\begin{equation*}
k_\eps(x^i) = \sum_{j=1}^m k_\eps(x^i,x^j)
\end{equation*}
and form the new kernel
\begin{equation*}
k_\eps^{(\alpha)}(x^i,x^j) = \frac{k_\eps(x^i,x^j)}{k_\eps(x^i)^\alpha k_\eps(x^j)^\alpha}
\end{equation*}
for some $\alpha\in [0,1]$. Finally, the transition matrix $\mathbf{P}_{\eps,\alpha}$ of the Markov chain is constructed by row-normalising $k_\eps^{(\alpha)}$:
\begin{equation}
\label{eq:diffmaps}
\mathbf{P}_{\eps,\alpha}(i,j) := \frac{k_\eps^{(\alpha)}(x^i,x^j)}{d^{(\alpha)}_\eps(x^i)}, \qquad d_\eps^{(\alpha)}(x^i) = \sum_{j=1}^m k_\eps^{(\alpha)}(x^i,x^j).
\end{equation}
In the limit $m\rightarrow\infty$ of infinite data, the strong law of large numbers ensures that all discrete sums converge almost surely to integrals over $q$. In particular, for~$f:\set{M}\to\R$,
\begin{equation}
\lim_{m\rightarrow\infty} \frac{1}{m}\sum_{j=1}^m k_{\eps}(x,x^j)f(x^j) = \int_{\set{M}}k_{\eps}(x,y)f(y)q(y)\dd y\,.
\label{eq:lim_int}
\end{equation}
The matrix $\mathbf{P}_{\eps,\alpha}$ only exists on the data points, but using the kernel function $h$ it is straightforward to extend the kernel $p_{\eps,\alpha}(x,x^j) = k_\eps^{(\alpha)}(x,x^j)/d^{(\alpha)}_\eps(x)$ to all $x\in\R^d$. We now define the operator $P_{\eps,\alpha}$ by
\begin{equation}\label{eq:Peps}
P_{\eps,\alpha} f(x) := \lim_{m\rightarrow\infty}\sum_{j=1}^m p_{\eps,\alpha}(x,x^j)f(x^j)
\end{equation}
and let $L_{\eps,\alpha} = \eps^{-1}\left(P_{\eps,\alpha} - \mathrm{Id}\right)$ be the corresponding generator. The error in (\ref{eq:Peps}) for finite~$m$ is of order~$\mathcal{O}(\eps^{-d/4}m^{-1/2})$ \cite{Hein2005,Singer2006}. In~\cite{coifman2006} Coifman et al showed that\footnote{The additional factor~$1/4$ comes from the fact that they choose~$h$ to have second moment equal to~$2$, we chose it to have~$\tfrac12$.}
\begin{equation}
\label{eq:Dmaps_eps_convergence}
\lim_{\eps\rightarrow 0}L_{\eps,\alpha} f = \frac{\Delta (f q^{1-\alpha})}{4q^{1-\alpha}} - \frac{\Delta (q^{1-\alpha})}{4q^{1-\alpha}}f
\end{equation}
holds uniformly on the space spanned by the first $K$ eigenfunctions of $\Delta$, for any fixed $K>0$. Here~$\Delta = \mathrm{div}\circ\mathrm{grad}$ is the (negative semi-definite) Laplace-Beltrami operator on~$\set{M}$. In particular, for $\alpha = 1$ one has~$\lim_{\eps\rightarrow 0}L_{\eps,1} f = \frac14\Delta f$.
In other words, the random walk generated by~$\mathbf{P}_{\eps,1}$ on the data points converges to Brownian motion on~$\set{M}$ as~$m\rightarrow\infty$ and~$\eps\rightarrow 0$. The method now proceeds by analysing the dominant spectrum of~$\mathbf{P}_{\eps,1}$. The number~$\Lambda$ of leading non-trivial eigenvalues is an estimator of the dimension of $\set{M}$, and the corresponding eigenfunctions~$\xi_i$, which converge in probability to those of~$\Delta$ for~$m\rightarrow\infty$ and~$\eps\rightarrow 0$, are good global intrinsic coordinates on~$\set{M}$. The~$\xi_i$ are the so-called \emph{diffusion maps} since they provide a map~$x^i\mapsto (\xi_1(x^i),\ldots, \xi_{\Lambda}(x^i))$ from~$\set{M}$ to the embedding space~$\set{E} = \mbox{span}\{\xi_1,\ldots, \xi_{\Lambda}\}$ \cite{coifman2006}. Figure~\ref{fig:dmap_example} gives an example.

\begin{figure}[h]
\centering
\includegraphics[width=0.35\textwidth]{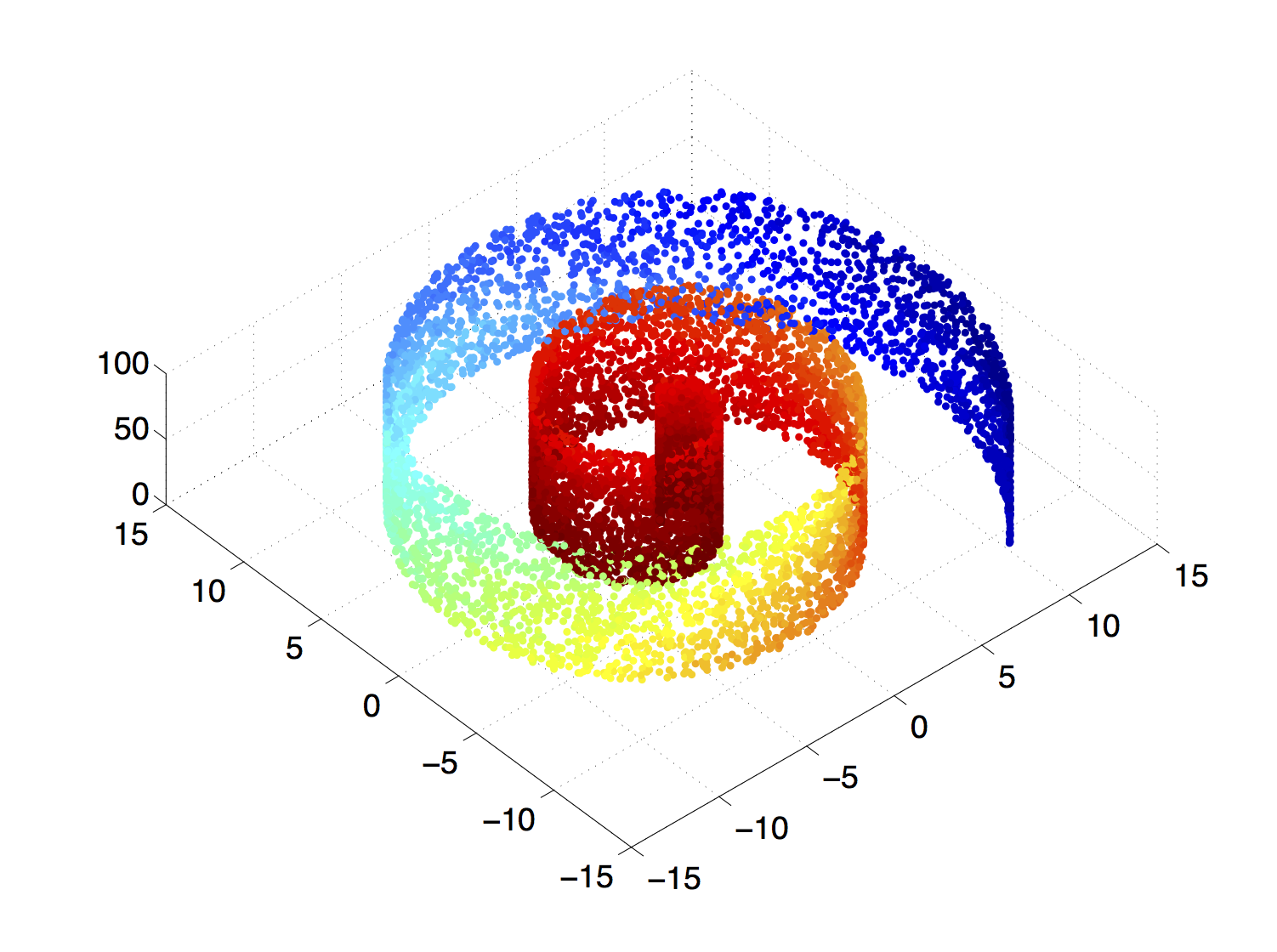}
\includegraphics[width=0.3\textwidth]{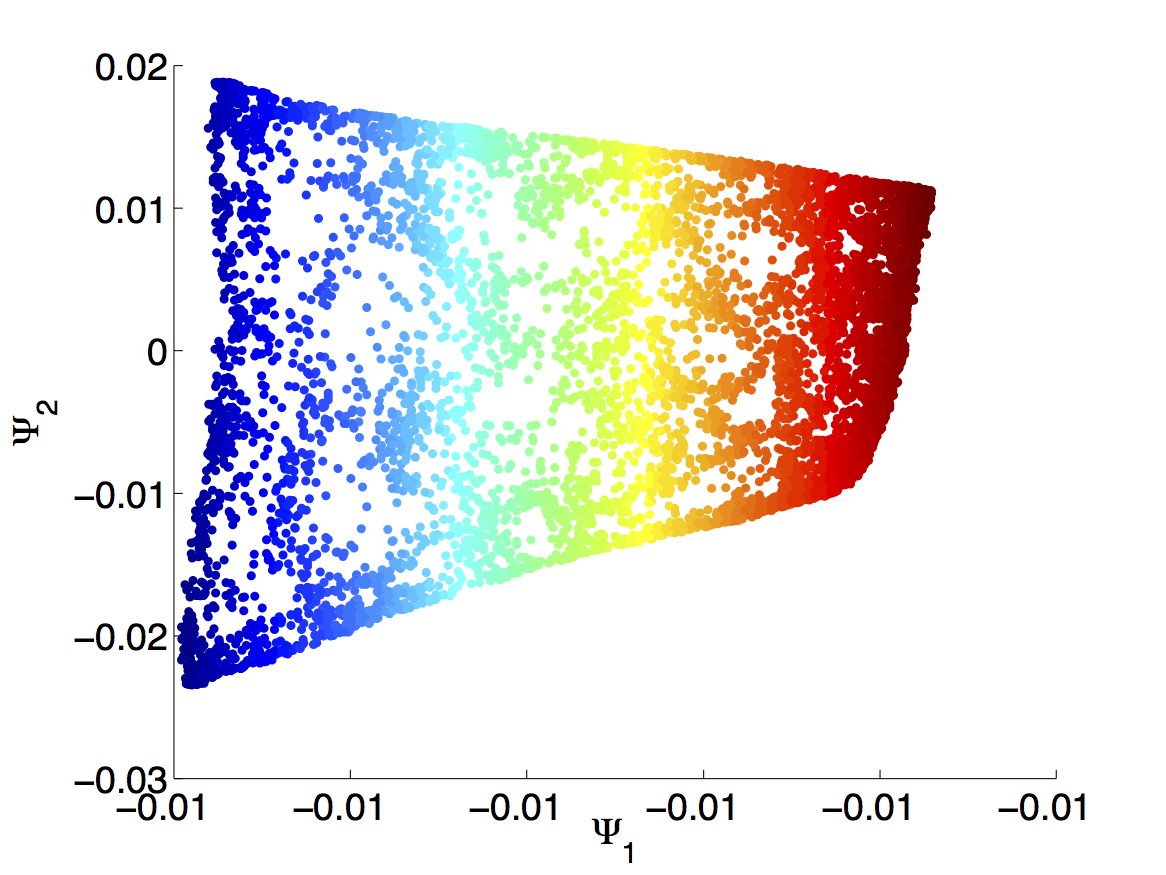}
\includegraphics[width=0.3\textwidth]{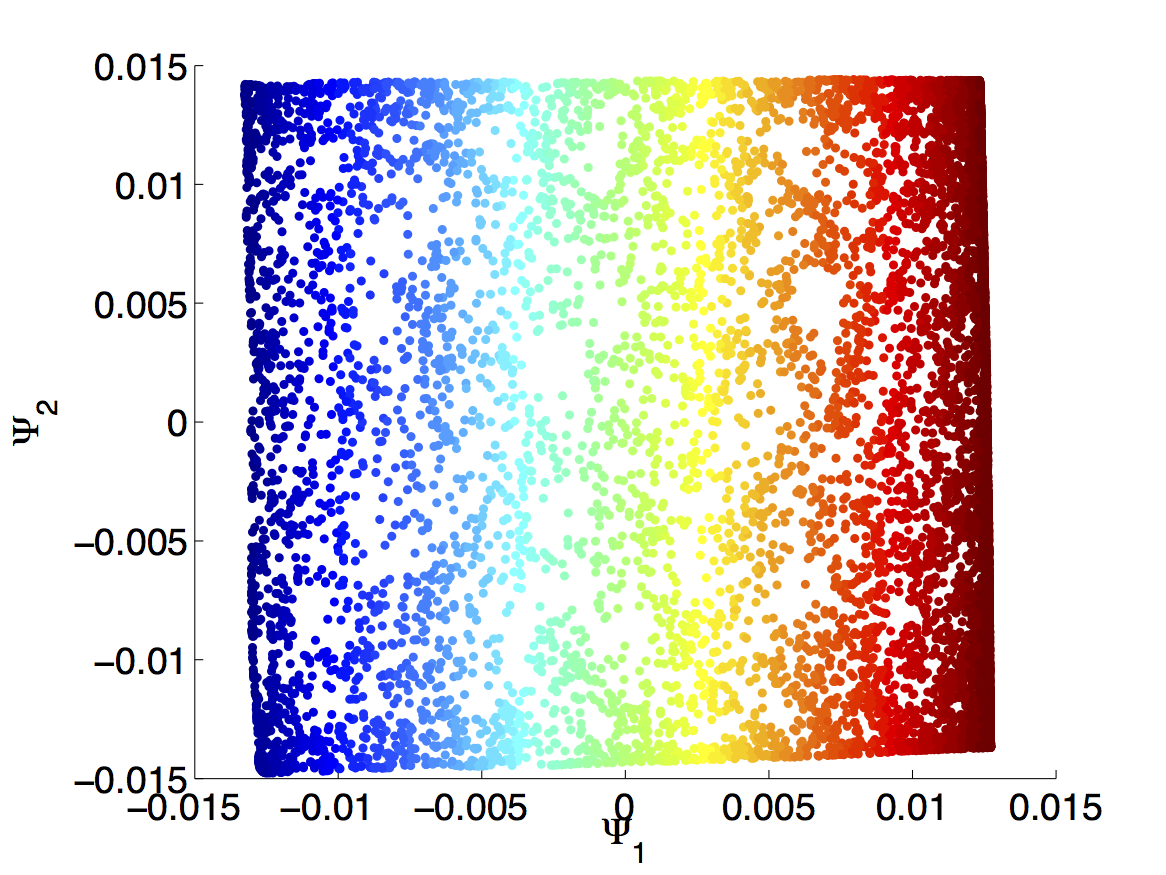}
\caption{Diffusion map example. Left: $m=10.000$ data points sampled from a rectangular strip embedded in~$\R^3$. The sampling density is non-uniform and decreases with distance from the origin. Color according to~$\xi_1$. Middle: Diffusion map embedding for~$\alpha=0$. Right: Diffusion map embedding for $\alpha=1$.}
\label{fig:dmap_example}
\end{figure}

\paragraph{Forward-backward diffusion maps.} 
Let us recast the diffusion operators from section~\ref{sec:cohset} (c) to our current setting, by defining
\begin{equation}\label{eq:diffusion_operators}
D_\eps f = \frac{1}{\eps^{d/2}}\int_{\set{M}}k_\eps(\cdot,y)f(y)\dd y, \quad \mathcal{D}_\eps f = \frac{D_\eps(qf)}{D_\eps q}\,.
\end{equation}
Further, for a matrix~$\mat{A}\in\R^{m\times m}$, let~$\mathbf{A} f(x^i) := \sum_j \mat{A}(i,j)f(x^j)$, and $\mathbf{A} f := (\mathbf{A} f(x^1),\ldots,\linebreak[3]\mathbf{A} f(x^m))$.

In order to make contact with the forward-backward dynamics developed in section~\ref{sec:cohset}, we will need a forward-backward version of diffusion maps. To minimize technical difficulties, we present this in Lemma \ref{lemma:fbdiffmaps} below for the case where $\set{M}$ has no boundary. If $\set{M}$ does have a boundary, then all the statements below hold uniformly on the set $\set{M}_\eps$ of points with distance at least $\eps^{\gamma}$ from the boundary for a fixed $0 < \gamma < \frac12$, while for points in $\set{M}\setminus \set{M}_\eps$ the presence of first-order derivatives in Taylor expansions of \eqref{eq:diffusion_operators} results in slightly worse asymptotics \cite{coifman2006}. Note that $\set{M}\setminus \set{M}_\eps$ is a set of measure $\mathcal{O}(\eps^\gamma)$, so this only has a mild effect.

\begin{lemma}\label{lemma:fbdiffmaps}
For $q$-distributed data points~$\{x^i\}_{i=1}^m$ on~$\set{M}\subset\R^n$ and~$\mathbf{P}_{\eps,0}$ as in (\ref{eq:diffmaps}), let us define the forward-backward diffusion matrix
\begin{equation}\label{eq:Bmatrix}
\mathbf{B}_\eps = (b_{\eps}(x^i,x^j))_{i,j=1}^m = \left(\mbox{diag}(\mathbf{P}^T_{\eps,0} \mathbf{1})\right)^{-1}\mathbf{P}_{\eps,0}^T\mathbf{P}_{\eps,0}\,,
\end{equation}
with~$\mbox{diag}(v)$ denoting the diagonal matrix with entries given by vector~$v$ on the diagonal.
By extending the kernel~$b_{\eps}$ from the data points to~$\R^d$ analogously as described after~\eqref{eq:lim_int}, we define the operator~$\mathcal{B}_\eps : L^2(\R^n,\mu) \rightarrow L^2(\R^n,\mu)$, where~$d\mu(x) = q(x)dx$, by
\begin{equation}\label{eq:Boperator}
\mathcal{B}_\eps f(x) := \lim_{m\rightarrow\infty} \sum_{j=1}^m b_\eps(x,x^j)f(x^j)\,,
\end{equation}
and denote its adjoint in~$L^2(\R^n,\mu)$ by~$\mathcal{B}_\eps^*$. Let $\set{M}$ have no boundary and let $f$ be bounded on $\set{M}$. Then we have the following properties:
\begin{enumerate}[(i)]
\item $\mathbf{B}_\eps \mathbf{1} = \mathbf{1}$ and $\mathcal{B}_\eps \mathbf{1} = \mathbf{1}$.
\item $|\mathbf{B}_\eps f(x^i) - \mathcal{B}_\eps f(x^i)| = \mathcal{O}(\eps^{-d/4}m^{-1/2})$.
\item $\mathcal{B}_\eps f = D_\eps\mathcal{D}_\eps f + \mathcal{O}(\eps^2)$ uniformly on $\set{M}$. As a consequence, $\mathcal{B}_\eps$ is almost self-adjoint: $|\mathcal{B}_\eps f(x) - \mathcal{B}^*_\eps f(x) | = \mathcal{O}(\eps^2)$ uniformly on $\set{M}$.
\item $\mathbf{B}_\eps$ is almost symmetric: $\|\mathbf{B}_\eps - \mathbf{B}_\eps^T\| \leq \mathcal{O}(\eps^2) + \mathcal{O}(\eps^{-d/4}m^{-1/2})$ for any compatible matrix norm~${\|\cdot \|}$.
\item If~$D_\eps q = q$, then~$\mathcal{B}_\eps f = D_\eps\mathcal{D}_\eps f$.
\item If, additionally,~$f\in C^3(\set{M})$, then $\lim_{\eps\rightarrow 0} \frac{1}{\eps}(\mathcal{B}_\eps f - f) = \tfrac12 q^{-1}\nabla \cdot \left (q\nabla f\right)$ holds pointwise on $\set{M}$.
\end{enumerate}
\end{lemma}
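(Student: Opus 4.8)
The plan is to use part (v) to reduce $\mathcal{B}_\eps$ to a nested application of the single integral operator $D_\eps$, and then to read off the $\eps$-linear coefficient from a second-order small-$\eps$ expansion of $D_\eps$. Since $D_\eps q = q$ is assumed, part (v) gives
\begin{equation*}
\mathcal{B}_\eps f = D_\eps \mathcal{D}_\eps f = D_\eps\!\left(\frac{D_\eps(qf)}{q}\right),
\end{equation*}
so the whole statement hinges on expanding $D_\eps$ to order $\eps^2$ and bookkeeping the result.

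The first step is the local expansion of $D_\eps$ on $C^3$ functions. Substituting $y = x + \sqrt{\eps}\,u$ in the definition \eqref{eq:diffusion_operators} and Taylor-expanding the integrand, the normalization $\int h(\|u\|^2)\,du = 1$, the vanishing first moment (rotation invariance of $h$), and the prescribed second moment $\tfrac12$ combine to give $D_\eps g = g + \tfrac{\eps}{4}\Delta g + \mathcal{O}(\eps^2)$ uniformly on $\set{M}$, with $\Delta$ the intrinsic Laplace--Beltrami operator. On a curved submanifold the passage from the ambient Euclidean kernel to the intrinsic $\Delta$ (and, near a boundary, the first-order correction terms) is governed by exactly the asymptotics of \cite{coifman2006} that already underlie \eqref{eq:Dmaps_eps_convergence}; the cleanest route is to quote these rather than re-derive them. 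Feeding $g = q$ into this expansion and invoking the hypothesis $D_\eps q = q$ forces the $\eps$-linear term to vanish, i.e.\ $\Delta q = \mathcal{O}(\eps)$; this is the sole role of the hypothesis, and it is what removes the term proportional to $\Delta q$ from the final limit.

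It remains to substitute the expansion into the nested expression. Using $D_\eps(qf) = qf + \tfrac{\eps}{4}\Delta(qf) + \mathcal{O}(\eps^2)$ and dividing by $q$ gives $\tfrac{1}{q}D_\eps(qf) = f + \tfrac{\eps}{4q}\Delta(qf) + \mathcal{O}(\eps^2)$; a second application of $D_\eps$, which acts as the identity plus $\mathcal{O}(\eps)$ on the already-$\mathcal{O}(\eps)$ correction, yields
\begin{equation*}
\mathcal{B}_\eps f = f + \eps\Big(\tfrac14\Delta f + \tfrac{1}{4q}\Delta(qf)\Big) + \mathcal{O}(\eps^2).
\end{equation*}
Dividing by $\eps$, passing to the limit, expanding $\Delta(qf) = q\Delta f + 2\,\nabla q\cdot\nabla f + f\Delta q$ by the product rule, and dropping the $f\Delta q$ contribution (which vanishes in the limit by the hypothesis) collapses the right-hand side to $\tfrac12\Delta f + \tfrac{1}{2q}\nabla q\cdot\nabla f = \tfrac12 q^{-1}\nabla\cdot(q\nabla f)$, which is the claim.

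The main obstacle is the second-order expansion of $D_\eps$ itself. For $\set{M}\subset\R^n$ the kernel is built from the ambient distance, so producing the clean coefficient $\tfrac14$ in front of the \emph{intrinsic} Laplacian requires controlling curvature (and boundary) corrections, and --- since we divide by $\eps$ --- verifying that all remainders, including those generated by the two nested applications, are genuinely and uniformly $\mathcal{O}(\eps^2)$ so that none of them contaminate the limit. Once that expansion is in hand, the remaining work --- the product-rule identity and the cancellation enabled by $\Delta q = \mathcal{O}(\eps)$ --- is purely algebraic.
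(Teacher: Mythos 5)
There is a genuine gap: the statement you are asked to prove is a six-part lemma, and your proposal addresses only part~(vi), while explicitly taking part~(v) as an input. Parts~(i)--(v) are where most of the work lies and none of it appears in your argument. In particular, (ii) requires identifying the limiting operator as $\mathcal{B}_\eps f = \tfrac{1}{d_\eps^\infty}P_{\eps,0}^*P_{\eps,0}f$ with $P_{\eps,0}^*$ the $L^2(\mu)$-adjoint of $P_{\eps,0}=\cDep$, and then importing the finite-sample variance bound $|\mathbf{P}_{\eps,0}f(x^i)-P_{\eps,0}f(x^i)|=\mathcal{O}(\eps^{-d/4}m^{-1/2})$ of Hein et al.\ together with a splitting of the error into normalization, outer, and inner contributions; (iii) needs the expansion $q_\eps^{-1}=q^{-1}\bigl(1+\eps(\tfrac{\Delta q}{q}-\omega)\bigr)+\mathcal{O}(\eps^2)$ (with the embedding-dependent potential $\omega$) and, crucially, the commutator-type estimate $D_\eps[h_q g]-D_\eps[h_q]\,D_\eps[g]=\mathcal{O}(\eps)$ to get the self-adjointness defect down to $\mathcal{O}(\eps^2)$ rather than $\mathcal{O}(\eps)$; (iv) is then a triangle inequality combining (ii) and (iii); and (v) is the observation that $q_\eps=q$ forces $d_\eps^\infty=\mathbf{1}$ and $P_{\eps,0}^*=D_\eps$. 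None of these steps is routine enough to omit, and (iii)--(iv) in particular cannot be recovered from the expansion strategy you describe.

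For part~(vi) itself your route is essentially the paper's --- both reduce to first-order expansions of $D_\eps$ and $\mathcal{D}_\eps$ and add the two contributions --- with one cosmetic difference worth flagging. The paper keeps $\mathcal{D}_\eps f=D_\eps(qf)/D_\eps q$ as a quotient, so the two $f\Delta q/(4q)$ terms cancel identically and no extra hypothesis on $q$ is needed at that stage; you instead substitute $D_\eps q=q$ into the denominator, which discards the quotient-rule term, and must then separately invoke $\Delta q=0$ to remove the surviving $f\Delta q/(4q)$. That is legitimate (if $D_\eps q=q$ holds for all small $\eps$, the expansion forces $\Delta q=0$ exactly, not merely ``$\Delta q=\mathcal{O}(\eps)$'' as you write --- $\Delta q$ does not depend on $\eps$), but it makes the conclusion look more dependent on the hypothesis than it is. Two smaller cautions: the bare expansion $D_\eps g=g+\tfrac{\eps}{4}\Delta g+\mathcal{O}(\eps^2)$ on a curved embedded $\set{M}$ carries an additional curvature potential term $\tfrac{\eps}{4}\omega g$ which cancels in the quotient defining $\mathcal{D}_\eps$ but not in the outer application of $D_\eps$ alone, so it must be tracked (the paper does this via Lemma~\ref{lemma:diffusion_finite_eps}); and the uniformity of all $\mathcal{O}(\eps^2)$ remainders, which you correctly identify as the main obstacle, is exactly what the quoted results of Coifman--Lafon provide and should be cited rather than asserted.
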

\begin{proof}
See Appendix~\ref{app:fbdiffmaps}.
\end{proof}
Property (vi) shows that in the small $\eps$ limit, $\mathcal{B}_\eps f$ approximates the action of the $q$-weighted Laplace-Beltrami operator on $\set{M}$ \cite{Hein2005}. Note the following two special cases of property (vi):\\
(a) If~$q\equiv \mathrm{const}$ is the uniform distribution, then~$\lim_{\eps\rightarrow 0} \frac{1}{\eps}(\mathcal{B}_\eps f - f) = \tfrac12 \Delta f$.\\
(b) If~$q = e^{-V}$ with some potential energy function $V:\set{X}\rightarrow \R$, then~$\lim_{\eps\rightarrow 0} \frac{1}{\eps}(\mathcal{B}_\eps f - f) = \tfrac12 \left(\Delta f - \nabla V\cdot \nabla f\right)$. Up to a factor $\tfrac12$, this is the infinitesimal generator of the diffusion process given by the stochastic differential equation~$d\bm x_t = - \nabla V(\bm x_t)dt +  d\bm w_t$, where~$\bm w_t$ is a standard Wiener process (Brownian motion). Note that~$q$ is the invariant distribution of this process.

\begin{remark}[Limiting kernel] \label{rem:limitkernel}
Note that (\ref{eq:Boperator}) can be written as
\begin{equation}
\label{eq:Bkernel_infty}
\mathcal{B}_\eps f(x) = \int b_\eps^\infty(x,y)q(y)f(y)\dd y\,,
\end{equation}
with the~$m$-independent \emph{limiting kernel}
\begin{equation}
b_\eps^\infty(x,y) = \frac{1}{d^\infty_\eps(x)}\frac{1}{\eps^d}\int \frac{k_\eps(x,z)k_\eps(z,y)}{q^2_\eps(z)}q(z)\dd z,
\label{eq:bInf}
\end{equation}
where we introduced the shorthands
\begin{equation}\label{eq:dinfty}
q_\eps(x) := D_\eps q(x), \quad d^\infty_\eps(x) = \frac{1}{\eps^{d/2}}\int \frac{k_\eps(x,y)}{q_\eps(y)}q(y)\dd y.
\end{equation}
One can think of~$q_\eps$ as the best way to represent~$q$ with the kernel functions~$k_\eps(x,y)$, and of~$d^\infty_\eps$ as the best way to represent~$\mathbf{1}$. If~$q_\eps = q$ holds, then~$d^\infty_\eps = \mathbf{1}$, and~$b^\infty_\eps(x,y)$ reduces to a symmetric, doubly stochastic kernel quite similar to~$\kappa$; cf.~\eqref{eq:T*T}. Intuitively, this observation will allow us to connect our diffusion maps construction below to the analytic framework of coherence, introduced in section~\ref{sec:cohset}.
\end{remark}

\subsection{Space-time Diffusion maps}\label{sec:main}

\paragraph{General setting.}

In this section, we combine the geometric ideas of diffusion maps with dynamics. Let~$\Phi_{s,t} : \X_s \rightarrow \X_t$ for~$s,t\in \R$ be the unknown, possibly non-autonomous flow map from~$\X_s\subset\R^d$ to~$\X_t \subset \R^d$,~$s,t\in\R$. The data set we have at our disposal consists of~$m$ trajectories evaluated at~$T\in\N$ time slices~$I_t = \{t_0,\ldots, t_{T-1}\}$. To simplify notation, we set~$\Phi_t:=\Phi_{t_0,t}$. That is, we have access to the data set
\[
X = \{x^i_t := \Phi_t x^i\: :\: i=1,\ldots,m;\; t\in I_t\}
\]
with initial points~$x^i\in \X$ that we assume to be i.i.d.\ realizations of random variables distributed according to the distribution~$q_0$. We call~$q_t$ the distribution of the points~$x_t^i$ at time~$t$. At every timeslice~$t\in I_t$, we can construct diffusion map matrices $\mathbf{P}_{\eps,\alpha,t}$ and a forward-backward diffusion matrix~$\mathbf{B}_{\eps,t}$ via (\ref{eq:Bmatrix}) by using the~$m$ data points~$\left\{\Phi_tx^i\right\}_{i=1}^m$. Then~$\mat{B}_{\eps,t}(i,j) = b_{\eps,t}(\Phi_t x^i,\Phi_t x^j)$, where
\begin{equation}
\label{eq:bt}
b_{\eps,t}(x,y) = \frac{1}{d_{\eps,t}(x)}\sum_{i=1}^m \frac{k_\eps(x,\Phi_tx^i)k_\eps(\Phi_tx^i,y)}{k_{\eps,t}(\Phi_t x^i)^2}, \quad d_{\eps,t}(x) := \sum_{i=1}^m \frac{k_\eps(x,\Phi_t x^i)}{k_{\eps,t}(\Phi_t x^i)}\,,
\end{equation}
and $k_{\eps,t}(\Phi_t x^i) := \sum_j k_\eps(\Phi_t x^i,\Phi_t x^j)$.
Now we construct a Markov chain on the trajectories by specifying the following \emph{Spacetime Diffusion Map} transition matrix~$\mathbf{Q}_{\eps} \in \R^{m\times m}$:
\begin{equation}
\label{eq:sptdm_matrix}
\mathbf{Q}_{\eps}(i,j) = \frac{1}{T}\sum_{t\in I_t}\mathbf{B}_{\eps,t}(i,j) = \frac{1}{T}\sum_{t\in I_t}b_{\eps,t}(\Phi_tx^i, \Phi_tx^j)\,.
\end{equation}
We will show in Theorem~\ref{thm:main} below, that~\eqref{eq:sptdm_matrix} is a data-based version of the time-averaged forward-backward transfer operators from~\eqref{eq:avgT*T}.
The transition matrix (\ref{eq:sptdm_matrix}) describes jumps between trajectories in the following manner: Starting at trajectory~$i$, first one of the timeslices~$I_t$ is selected uniformly at random. Then, the forward-backward diffusion map transition matrix (\ref{eq:Bmatrix}) \emph{at the selected timeslice} is used to jump to a new trajectory~$j$.

\paragraph{Connection to coherence.} The connection between the transition probabilities prescribed by (\ref{eq:sptdm_matrix}) and the notion of coherence is now intuitively clear: \emph{Coherent sets are tight bundles of trajectories.}
That is, if there is a subset~$I_b = \{i_1,\ldots,i_b\}$ of trajectories such that~$\|\Phi_t x^i-\Phi_tx^j\|$ is small for all~$i,j\in I_b$ and all~$t\in I_t$, we would like to see these trajectories as a part of a coherent set.
For such a tight bundle of trajectories, all transition probabilities assigned by~$\mathbf{Q}_{\eps}$ between~$i,j\in I_b$ will be large, and we should be able to identify~$I_b$ by clustering~$\mathbf{Q}_{\eps}$.

%
%
%

Our main result is the following theorem, which links the transition matrix~$\mathbf{Q}_{\eps}$ with the analytical coherence framework.
\begin{theorem}
\label{thm:main}
With~$\mathbf{Q}_{\eps}$ as in~\eqref{eq:sptdm_matrix}, we have for fixed~$\eps>0$
\begin{equation}
\label{eq:main_alpha0}
\lim_{m\rightarrow\infty}\mathbf{Q}_{\eps}f(x^i) = \frac{1}{T}\sum_{t\in I_t} \P^*_t \mathcal{D}_{\eps,t}^*\mathcal{D}_{\eps,t}\P_tf(x^i) + \mathcal{O}(\eps^2),
\end{equation}
where~$\mathcal{D}_{\eps,t}f:= D_{\eps}(fq_t)/D_{\eps}q_t$, cf~\eqref{eq:diffusion_operators}.
Convergence in \eqref{eq:main_alpha0} is a.s.\ as~$m\rightarrow\infty$. The pointwise error for finite~$m$ is~$\mathcal{O}(\eps^{-d/4}m^{-1/2})$.
\end{theorem}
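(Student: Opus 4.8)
The plan is to reduce everything to a single time slice and then feed the result into Lemma~\ref{lemma:fbdiffmaps}. Since $\mathbf{Q}_{\eps} = \frac{1}{T}\sum_{t\in I_t}\mathbf{B}_{\eps,t}$ by~\eqref{eq:sptdm_matrix}, and since both the $m\to\infty$ limit and the $\mathcal{O}(\eps^2)$ remainder are linear in the summands, it suffices to prove that for each fixed $t$
\[
\lim_{m\to\infty}\mathbf{B}_{\eps,t}f(x^i) = \P_t^*\mathcal{D}_{\eps,t}^*\mathcal{D}_{\eps,t}\P_t f(x^i) + \mathcal{O}(\eps^2),
\]
with pointwise finite-$m$ error $\mathcal{O}(\eps^{-d/4}m^{-1/2})$; the averaged statement then follows by summing the $T$ estimates. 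Note that the right-hand side is exactly $\T_t^*\T_t f$ for $\T_t = \mathcal{D}_{\eps,t}\P_t$ the noisy forward operator of part~(c) of Section~\ref{sec:cohset}, so this indeed recovers the time-averaged operator~\eqref{eq:avgT*T}.

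The crucial step is a change of viewpoint from the initial time to time $t$, i.e.\ a conjugation by the flow. The matrix $\mathbf{B}_{\eps,t}$ is built from the transported points $x_t^j := \Phi_t x^j$, which, being pushforwards under $\Phi_t$ of the i.i.d.\ $q_0$-distributed $x^j$, are themselves i.i.d.\ and $q_t$-distributed. To move $f$, a function at initial time, into this time-$t$ picture I would use the normalized Perron--Frobenius operator $\P_t : L^2(\X,\mu)\to L^2(\X_t,\nu_{\Phi,t})$, where $\mu$ has density $q_0$ and $\nu_{\Phi,t}$ is the pushforward of $\mu$ under $\Phi_t$ (so its density is $q_t$). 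Using the adjoint relation $\langle\P_t f,g\rangle_{\nu_{\Phi,t}} = \langle f,\U_t g\rangle_{\mu}$ with $\U_t g = g\circ\Phi_t$, together with the change of variables $y=\Phi_t x$ (in which the Jacobian cancels against $q_t\circ\Phi_t = q_0/|\det D\Phi_t|$), one checks that $\P_t f = f\circ\Phi_t^{-1}$, and hence $f(x^j) = (\P_t f)(\Phi_t x^j) = (\P_t f)(x_t^j)$. Consequently, with $b_{\eps,t}$ from~\eqref{eq:bt},
\[
\mathbf{B}_{\eps,t}f(x^i) = \sum_{j=1}^m b_{\eps,t}(x_t^i,x_t^j)\,(\P_t f)(x_t^j),
\]
which is precisely the forward-backward diffusion matrix of Lemma~\ref{lemma:fbdiffmaps} for the data $\{x_t^j\}$ of density $q_t$, applied to $\P_t f$ and evaluated at $x_t^i$.

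Now I would invoke Lemma~\ref{lemma:fbdiffmaps} at the slice $t$ (with $q=q_t$, so that its $\mathcal{D}_\eps$ is the $\mathcal{D}_{\eps,t}$ of the theorem). Part~(ii) supplies the almost-sure $m\to\infty$ convergence to $\mathcal{B}_{\eps,t}(\P_t f)(x_t^i)$ with pointwise error $\mathcal{O}(\eps^{-d/4}m^{-1/2})$, and part~(iii) gives $\mathcal{B}_{\eps,t}(\P_t f) = D_\eps\mathcal{D}_{\eps,t}(\P_t f) + \mathcal{O}(\eps^2)$ uniformly. It remains to identify the two surviving operators: by the kernel symmetry recorded in part~(c) of Section~\ref{sec:cohset} one has $D_\eps = \mathcal{D}_{\eps,t}^*$, and evaluation at $x_t^i = \Phi_t x^i$ is the action of the Koopman operator $\U_t = \P_t^*$, namely $h(\Phi_t x^i) = (\P_t^* h)(x^i)$. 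Combining these,
\[
\lim_{m\to\infty}\mathbf{B}_{\eps,t}f(x^i) = \P_t^*\mathcal{D}_{\eps,t}^*\mathcal{D}_{\eps,t}\P_t f(x^i) + \mathcal{O}(\eps^2),
\]
and averaging over $t\in I_t$ produces~\eqref{eq:main_alpha0}.

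Since the bulk of the analytic work is already discharged by Lemma~\ref{lemma:fbdiffmaps}, the main obstacle in the theorem itself is the bookkeeping of function spaces in the conjugation step. One must verify that $\P_t f = f\circ\Phi_t^{-1}$ holds for the \emph{normalized} operator relative to the specific weights $q_0$ and $q_t$, which is exactly what makes the Jacobians cancel and obviates any volume-preservation hypothesis here, and one must check that the adjoints $D_\eps = \mathcal{D}_{\eps,t}^*$ and $\P_t^* = \U_t$ are taken in mutually compatible weighted $L^2$-spaces, so that the composite $\P_t^*\mathcal{D}_{\eps,t}^*\mathcal{D}_{\eps,t}\P_t$ is well defined. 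A secondary point is the uniformity of the $\mathcal{O}(\eps^2)$ and $\mathcal{O}(\eps^{-d/4}m^{-1/2})$ bounds across the finitely many slices and, if $\X_t$ carries a boundary, their restriction to the interior set $\set{M}_\eps$ flagged before the lemma; because there are only $T$ terms, the averaging in~\eqref{eq:sptdm_matrix} preserves both orders.
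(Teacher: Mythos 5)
Your proposal is correct in its overall architecture and matches the paper's strategy at the skeleton level: reduce to a single time slice, recognize $\mathbf{B}_{\eps,t}$ as the forward--backward diffusion matrix of Lemma~\ref{lemma:fbdiffmaps} built on the $q_t$-distributed samples $\{\Phi_t x^j\}$, invoke parts (ii) and (iii) of that lemma, and identify $D_\eps=\mathcal{D}_{\eps,t}^*$ and evaluation at $\Phi_t x^i$ with $\U_t=\P_t^*$. Where you genuinely diverge from the paper is in how the weights $f(x^j)$ are transported to the time-$t$ slice. You use the pointwise identity $\P_t f = f\circ\Phi_t^{-1}$, so that $f(x^j)=(\P_t f)(\Phi_t x^j)$ exactly and Lemma~\ref{lemma:fbdiffmaps}~(ii) applies verbatim; this is clean, but it requires $\Phi_t$ to be invertible (and $q_t>0$). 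The paper deliberately avoids this assumption: its proof labels the images $\{y^k\}$ with multiplicities, replaces $f$ by the conditional average $\hat g(y^k)$ of $f$ over the preimages of $y^k$, and shows in two Monte Carlo steps (each contributing $\mathcal{O}(m^{-1/2})$) that $\sum_j h(\Phi_t x^j)f(x^j)$ is an unbiased estimator of $\langle h,\P_t f\rangle_{q_t}$, so that the weighted point cloud converges weakly to $(\P_t f)q_t$ even when $\Phi_t$ is not injective and $\P_t f\neq f\circ\Phi_t^{-1}$. What your route buys is brevity and an exact (rather than $\mathcal{O}(m^{-1/2})$-approximate) transport step in the diffeomorphism case, which covers ODE flow maps; what the paper's route buys is validity for non-injective dynamics and an explicit accounting of the sampling error in the transport step, which is part of the stated $\mathcal{O}(\eps^{-d/4}m^{-1/2})$ finite-$m$ error. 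If you intend your argument to stand at the theorem's stated level of generality, you should either state the invertibility hypothesis explicitly or replace the conjugation step by the conditional-expectation argument. Your closing remarks on the compatibility of the weighted $L^2$-spaces, the boundary set $\set{M}_\eps$, and the uniformity over the finitely many slices are all pertinent and consistent with the paper.
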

\begin{proof}
See Appendix~\ref{app:mainthm} for the full proof. The idea, though, can be sketched with Figure~\ref{fig:Qexample} as follows.

Let us think of the values of the function~$f$ in the data points~$x^i$ as statistical weights. The collection of these weighted point measures approximates the distribution~$fq_0$. Fixing a time slice~$t$, if we assign the weight~$f(x^i)$ to the points~$x_t^i$, respectively, they will approximate the distribution~$(\P_t f)q_t$. Application of the matrix~$\mat{B}_{\eps,t}$ to this latter data vector redistributes the statistical weights like diffusion. Pulling back the new statistical weights to the data points at initial time (the new weight of each data point~$x_t^i$ is assigned to~$x^i$) approximates the application of~$\P_t^*$.
\end{proof}

\begin{figure}[ht]
\centering
\includegraphics[width=0.7\textwidth]{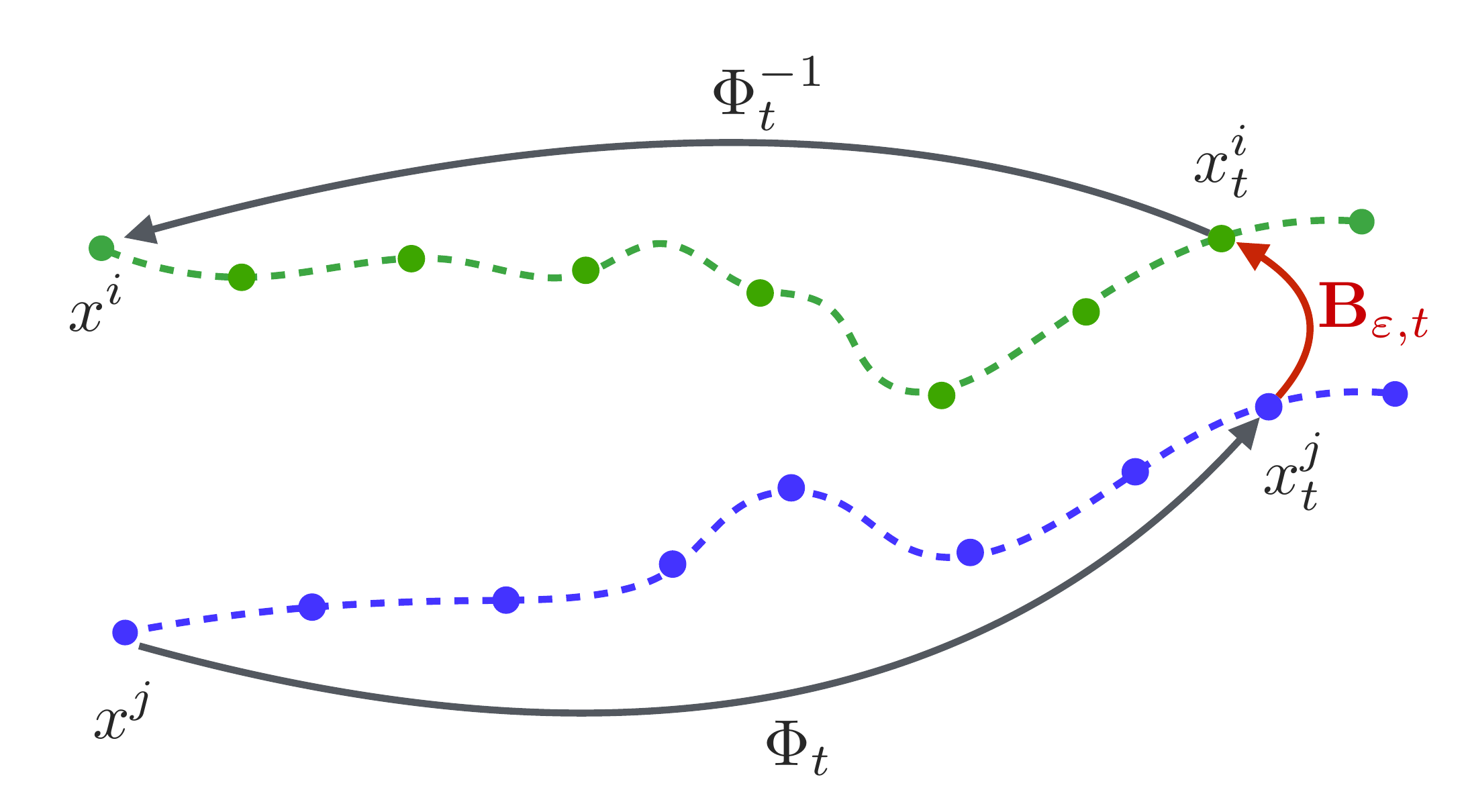}
\caption{To implement ``forward-diffuse-backward'', we start at $x^j$ at time $t_0$, then push the statistical weight of this data point,~$f(x^j)$, along the~$j^{\rm th}$ trajectory (shown in blue) to the data point~$x^j_t$, then use~$\mat{B}_{\eps,t}$ to redistribute the weights between the data points (this is diffusion, shown in red)  and finally transport the new weights along the~$i^{\rm th}$ trajectory back to initial time, to arrive at~$x^i$.}
\label{fig:Qexample}
\end{figure}

Theorem~\ref{thm:main} says that in the data-rich limit we are approximating the very analytical object that was designed to identify coherent pairs (tuples). In particular, the dominant eigenfunctions $\{\Xi_1,\ldots, \Xi_\Lambda\}$ of $\mathbf{Q}_\eps$ approximate those of the operator $\frac{1}{T}\sum_{t\in I_t} \P^*_t \mathcal{D}_{\eps,t}^*\mathcal{D}_{\eps,t}\P_t$, and have thus a significance for the dynamical system $\Phi_t$ which is similar to the significance of the diffusion map eigenfunctions $\{\xi_1,\ldots, \xi_\Lambda\}$ of $\mathbf{P}_{\eps,\alpha}$ for the purely geometrical problem. In fact $\{\Xi_1,\ldots, \Xi_\Lambda\}$ encode both dynamical and geometrical properties, and for this reason we call them \emph{spacetime diffusion maps}.

A few comments are in order:

\begin{enumerate}
\item[(a)] Although the operator~$\T^*\T$ is \emph{symmetric (self-adjoint)} and \emph{stochastic},~$\mat{Q}_{\eps}$ is merely stochastic by construction. It is, however, by Lemma~\ref{lemma:fbdiffmaps}~(iv),~$\mathcal{O}(\eps^2)$ close to a symmetric matrix, and this estimate is getting better as~$m\to\infty$, cf.~\eqref{eq:main_alpha0}. In all our numerical studies, the dominant spectrum of~$\mat{Q}_{\eps}$ was real-valued.

\item[(b)] The break of symmetry of~$\mat{Q}_{\eps}$ comes from the row-normalization by~$d_{\eps,t}$ in~\eqref{eq:bt}. As discussed in Remark~\ref{rem:limitkernel},~$d_{\eps,t}$ is our best approximation of the constant one function, hence the more data is available, the closer we get to this normalization not having any effect.

\item[(c)] Formally, we can apply our method, and construct the space-time diffusion matrix~$\mat{Q}_{\eps}$ also, if the trajectories are generated by \emph{non-deterministic dynamics}. In this case,~$\Phi_t x$ is a random variable for any fixed~$x$, and the Koopman operator of this dynamics is defined by~$\U_t g(x):=\expect [f(\Phi_t x)]$, where the expectation~$\expect[\cdot]$ is taken with respect to the law of~$\Phi_t x$.
Nevertheless, coherent sets can still be extracted from the dominant eigenmodes of~$\T^*_t\T_t$~\cite{FrSaMo10,DeJuMa15}, where~$\T_t$ is the (normalized) forward operator associated with the non-deterministic dynamics~$\Phi_t$. We study non-deterministic dynamics in a future publication.
\end{enumerate}

\begin{remark}[A data-based dynamic Laplacian] \label{rem:dynLap}
Suppose that the dynamics~$\Phi$ is a diffeomorphism\footnote{A diffeomorphism is a differentiable, everywhere invertible map, with a differentiable inverse.}, e.g.\ the solution of an ODE. Then~$\P^*_t\mathcal{P}_t = \mathrm{Id}$,\footnote{This is a consequence of~\cite[Corollary~3.2.1]{LaMa94}, by noting that the operator~$P$ therein plays the role of our~$\mathcal{F}$.} and using property~(vi) of Lemma~\ref{lemma:fbdiffmaps} it can be readily seen that
\begin{equation}\label{eq:dynLap}
\lim_{m\rightarrow\infty}\mathbf{Q}_{\eps}f(x^i) = \mathrm{Id} + \frac{\eps}{2} \frac{1}{T}\sum_{t\in I_t} \P^*_t \Delta_{q_t} \mathcal{P}_t f(x^i) + \mathcal{O}(\eps^2)
\end{equation}
with the~$q$-weighted Laplacian~$\Delta_q f := q^{-1}\nabla\cdot (q\nabla f)$. In other words, the operator~$\mathbf{L}_\eps = \eps^{-1}(\mathbf{Q}_\eps - \mat{I})$, with~$\mat{I}$ being the identity matrix matching the size of the data, is a data-based approximation of the operator~$\frac{1}{2T}\sum_{t\in I_t} \P^*_t \Delta_{q_t} \mathcal{P}_t$, which can be seen as a generalization of the dynamic Laplacian introduced in~\cite{Froyland2015} for non-uniform densities~$q$.
\end{remark}

\subsection{Clustering with Space-time diffusion maps}

A consequence of Theorem~\ref{thm:main} is that we may compute sets which are coherent for all times $t\in I_t$ by searching for a subset~$I_b = \{i_1,\ldots, i_b\}$ of trajectories which is \emph{metastable under~$\mathbf{Q}_\eps$}. This reduces the problem of computing coherent sets, which involves both geometry and dynamics, to the problem of clustering a graph. In this article, we use spectral clustering~\cite{von2007tutorial, Lambiotte09} on~$\mathbf{Q}_\eps$ to solve this problem, since
\begin{enumerate}[(i)]
\item spectral clustering can identify metastable sets~\cite{Schuette1999, Deuflhard2000}, and
\item eigenvectors of a diffusion maps transition matrix yield good coordinates to the intrinsic geometry of the data set; cf~Figure~\ref{fig:dmap_example}. We will see in section~\ref{sec:numerics} that the eigenvectors of~$\mat{Q}_{\eps}$ give natural ``\emph{transport coordinates}''.
\end{enumerate}
Alternative clustering methods, e.g.\ to save computational time, are of course possible and deserve further exploration, see for example \cite{schaeffer2007graph}. Any spectral clustering algorithm proceeds in the following three steps:
\begin{enumerate}
\item For some not too large $N$, compute the $N$ largest eigenvalues $\lambda_i$ of $\mathbf{Q}_\eps$. Identify $\Lambda$ such that $\lambda_{\Lambda} - \lambda_{\Lambda+1}$ is large (this is known as \emph{spectral gap}).
\item Compute the $\Lambda$ largest eigenfunctions $\Xi_1,\ldots, \Xi_\Lambda$.
\item Postprocessing: Extract $\Lambda$ clusters $C_1,\ldots, C_\Lambda$ from\footnote{$\Xi_1 = \mathbf{1}$ is the constant function.} $\Xi_2,\ldots, \Xi_\Lambda$.
\end{enumerate}


The justification for this approach is that a spectral gap after $\Lambda$ dominant eigenvalues indicates $\Lambda$ metastable sets, and that the eigenfunctions $\Xi_2,\ldots, \Xi_\Lambda$ are almost constant on the metastable sets \cite{Schuette1999, Deuflhard2000}. A number of different algorithms exist, depending on how the postprocessing step is handled, and whether hard or soft clusters are being sought. For example, the algorithms by Shi and Malik \cite{shi2000normalized} and Ng et al \cite{ng2002spectral} compute $\Lambda$ hard clusters which form a full partition of the state space $V = \{1,\ldots, m\}$ of $\mathbf{Q}_\eps$ by performing $k$-means on $\Xi_2,\ldots, \Xi_\Lambda$. We do the same in this paper\footnote{We normalize the $\Xi_i$ such that $\|\Xi_i\|_2 = 1$.}, mostly for reasons of simplicity and ease of implementation. However, we note that enforcing a full partition into metastable sets is often too strict. In some cases, it might be desirable to use fuzzy membership functions instead. We refer the reader to~\cite{deuflhard2005robust, bezdek2013pattern, sarich2014modularity} for more information.


\begin{remark}[Frame-independence]
The eigenfunctions~$\Xi_1,\ldots,\Xi_{\Lambda}$ are purely functions of the Euclidean distances~$\|x^i-x^j\|$ between Lagrangian observers. Since these are independent under a possibly time-dependent affine-linear transformation with orthogonal linear part, the values of the eigenfunctions in the data points are also independent of the transformation.
Thus, the algorithm is independent of the frame of reference, i.e.~objective.
\end{remark}

\subsection{Algorithmic aspects}

We describe an algorithm for extracting coherent sets from data, which we assume to be given as a~$d\times m \times T$ array of~$mT$ time-ordered data points in~$\R^d$. The algorithm has two stages:

\begin{enumerate}
\item Compute~$\mathbf{Q}_\eps$. The computational cost of this is dominated by the $m^2T$ distance computations between the~$mT$ data points. In practice, for any given point~$x^i$ only the distances to points within the cutoff radius~$r$, that is, only distances that satisfy~$\| x^i - x^j\|^2 \leq r\eps$, need to be computed and stored. This is a typical nearest neighbor search problem~\cite{arya1994optimal}, and an efficient implementation is readily available in many software packages. To illustrate, we provide a Matlab pseudocode that uses the \texttt{rangesearch} function, which solves this problem using k-d trees.

\begin{lstlisting}[frame=single]
Q = sparse(m,m);
range = sqrt(r*eps)
for t=1:T
	% retrieve data points in timeslice t
    data = pts(:,:,t);
    % compute all distances within range
    [idx, D] = rangesearch(data',data',range);
    % reshape output of rangesearch into sparse matrix
    K = assemble_sim_matrix(idx, D, eps);
    % compute diffusion map matrix B
    q = sparse(1./sum(K,2));
    Peps = diag(q)*K;
    deps = sparse(1./sum(Peps,1));
    B = diag(deps)*(transpose(Peps))*Peps;
    % add up to Q
    Q = Q + B;
end;
% normalize
Q = 1/T*Q;
\end{lstlisting}

Besides the distance computations, the only other computationally expensive task is the~$T$ sparse matrix multiplications that are needed to compute the matrices~$\mathbf{B}_{\eps,t}$. The cost for this can be estimated as~\cite{bulucc2012} $\mathcal{O}(b^2m T)$ in the best and~$\mathcal{O}(bm^2T)$ in the worst case, where~$b$ is the typical number of nonzero elements in any row of~$\mathbf{P}_{\eps,0,t}$. In many practical cases, one can avoid this cost by computing a simplified version of~$\mathbf{Q}_\eps$, e.g.\
\begin{equation}
\label{eq:tildeQ}
\mathbf{\tilde Q}_\eps = \frac{1}{T}\sum_{t\in I_t} \mathbf{P}_{\eps,\alpha,t}
\end{equation}
which requires no matrix multiplication. If~$\alpha = 1/2$ is chosen, then~$\mathbf{Q}_\eps$ and~$\mathbf{\tilde Q}_{2\eps}$ agree up to first order in~$\eps$ for large~$m$. This can be seen from comparing~(\ref{eq:Dmaps_eps_convergence}) for~$\alpha=1/2$ with Lemma~\ref{lemma:fbdiffmaps}~(vi).
If the densities $q_t$ are uniform for all~$t\in I_t$ then there will be no difference at all between~$\mathbf{Q}_\eps$ and $\mathbf{\tilde Q}_{2\eps}$. In our numerical experiments, $\mathbf{Q}_\eps$ and $\mathbf{\tilde Q}_{2\eps}$ always produced very similar results.

\item Run the spectral clustering algorithm. This requires the computation of the leading eigenvectors of~$\mathbf{Q}_\eps$, which is challenging for large~$m$ with a worst-case complexity of~$\mathcal{O}(m^3)$ even for sparse matrices~\cite{chen2011}. We do not discuss large-scale spectral clustering in here, since we have shown that our method ``converges'' to the analytical method in the data-rich limit. In that case other, Galerkin projection-based methods, are available, see~\cite{FrSaMo10,FrPa14,DeJuMa15,WRR15}, and~\cite{KlKoSch15} for an overview of methods.
For more information about fast spectral clustering algorithms we refer to~\cite{chen2006,fowlkes2004,liu2007}.
\end{enumerate}

\paragraph{Choice of parameters.} The cutoff radius~$r$ is used to tune the shape of the kernel function~$h$. For diffusion maps,~$r$ should be smaller then the scalar curvature of~$\set{M}$, which determines the length scale at which $\set{M}$ no longer looks locally flat \cite{Hein2005}. However, the scalar curvature is typically not known. We will choose~$r = 2$, which corresponds to a cutoff at $\exp(-r) \approx 0.1$. We found that increasing $r$ resulted in heavier computations due to the reduced sparsity\footnote{The sparsity of a matrix~$\mat{A}$ is the number of of nonzero entries of~$\mat{A}$ devided by the total number of entries.} of $\mathbf{Q}_\eps$, while having no significant effect on the results.

How should one choose~$\eps$ for a given amount of data, that is, for a given~$m$? There are two error terms present in~\eqref{eq:main_alpha0}, a variance term scaling as~$\mathcal{O}(\eps^{-d/4}m^{-1/2})$ and a bias term scaling as~$\mathcal{O}(\eps^2)$. This represents a trade-off: If~$\eps$ is reduced then the bias is decreased but the variance is increased. $\mathbf{Q}_\eps$ inherits this behavior from diffusion maps. The optimal choice of~$\eps$ for diffusion maps was investigated in~\cite{Singer2006} and found to be
\begin{equation}
\label{eq:epsopt}
\eps = \frac{C(\set{M})}{m^{1/(3+d/2)}}.
\end{equation}
Here~$C(\set{M})$ is an unknown constant which depends on the manifold~$\set{M}$. Equation~\eqref{eq:epsopt} tells us that if twice the amount of data is available, then we can reduce~$\eps$ by a factor of~$2^{-1/(3+d/2)}$. In practice, there are two good indicators for choosing $\varepsilon$: (i) One could choose $\eps$ such that the sparsity of~$\mathbf{Q}_\eps$ is between~$1\%$ and~$5\%$ (typically used values for sparse matrices), (ii) one may compute the dominant spectrum of $\mathbf{L}_\eps = \eps^{-1}(\mathbf{Q}_\eps - \mathbf{I})$ (see Remark \ref{rem:dynLap}) for different~$\eps$, and choose~$\eps$ based on minimal sensitivity of the eigenvalues, see section~\ref{sec:numerics} for more details.

\paragraph{Missing data.} Many real-world data sets are incomplete. For example, not all of the data points~$\left\{\Phi_t x^i\right\}_{t\in I_t}$ of any given trajectory might be available, but only some of them. Our algorithm can handle this naturally. We adopt the following convention: Whenever the distance~$\|\Phi_t x^i - \Phi_t x^j\|$ cannot be computed because e.g.~$\Phi_t x^i$ is missing, we set~$\|\Phi_t x^i - \Phi_t x^j\| = \infty$. This convention is easily implemented and leads to the~$i$-th row of~$\mathbf{B}_{\eps,t}$ being equal to $\delta_{ij}$. Hence, from the point of view of the Markov chain induced by~$\mathbf{Q}_\eps$, a missing data point~$\Phi_t x^i$ means that at the time slice~$t$, the Markov chain cannot leave or jump to trajectory~$i$.

\section{Is there an~$\eps$-free construction?} \label{sec:epsfree}

As already noted in section~\ref{sec:cohset} (e), the parameter~$\eps$ is in general artificial, and it is not immediate what are natural choices for it. Also, one could apply noise that is not Gaussian. We recall equation \eqref{eq:Tepsfree}, which provides a perturbation expansion in~$\eps$ \cite{Froyland2015} if the dynamics~$\Phi$ is a volume-preserving diffeomorphism:
\begin{equation}
\T^*\T f(x) = f(x) + \frac{\eps}{2}\P^*\Delta\P f(x) + o(\eps)\,,
\label{eq:T*Texpansion}
\end{equation}
for~$f\in C^3(\set{M})$.
Equation~\eqref{eq:T*Texpansion} also holds if the noise is not Gaussian, but has mean zero and covariance matrix~$I$. This result allows to extract coherent sets from the eigenfunctions of the dynamic Laplacian, which is an~$\eps$-free operator.

In Remark~\ref{rem:dynLap}, we extended this result to the non-volume-preserving case, where the Laplace operator has to be replaced by the~$q$-Laplacian~$\Delta_q = q^{-1}\nabla\cdot(q\nabla)$. Can one obtain a \emph{purely data-based} $\eps$-free construction that mimics the dynamic Laplacian? Remark~\ref{rem:dynLap} readily suggests to take~$\lim_{\eps\to 0}\frac{1}{\eps}(\mat{Q}_{\eps}-\mat{I})$. Note, however, that~$\eps\mapsto k_{\eps}(x,y)$ is for~$x\neq y$ an infinitely smooth function, with its derivatives \emph{of any order} being zero at~$\eps=0$. This renders the~$\eps$-derivatives of~$\mat{Q}_{\eps}$ of any order also zero. This holds true if the kernel base function~$h$ in~\eqref{eq:keps} is replaced by any compactly supported function.

It seems like something went wrong here. It turns out, we cannot exchange the limits~$m\to\infty$ and~$\eps\to 0$. Lemma~\ref{lemma:fbdiffmaps}~(ii) already indicates this: the~$\mathcal{O}(\eps^{-d/4}m^{-1/2})$ estimate diverges as~$\eps\to 0$ for finite~$m$. One can mimic~\eqref{eq:T*Texpansion} by finite data, but 
statements like~$\mat{Q}_{\eps} = \mat{I} + \eps \mat{L}_{\eps} + \mathcal{O}(\eps^2)$ only hold for~$\eps>\eps(m)$, where~$\eps(m)\to 0$ as~$m\to\infty$.

So far, we chose the exponential kernel base function~$h(x) = c_r\exp(-x) \mathbf{1}_{x\leq r}$ for our computations, since it gives rise to explicit diffusion operators~$P_{\eps,\alpha}$ via Lemma~\ref{lemma:diffusion_finite_eps}. It turns out this choice is not necessary for~(\ref{eq:Dmaps_eps_convergence}), and thus~(\ref{eq:dynLap}), to hold. In Ref.~\cite{Hein2005}, it was shown that~(\ref{eq:Dmaps_eps_convergence}) can be established with virtually any kernel base function~$h: \R^+ \rightarrow \R$, it merely has to satisfy some mild conditions, including sufficient smoothness, boundedness, and having a compact support.
Observe now that the choice~$h(x) = x^{-a/2}$ with $a>0$ would lead to $k_\eps(x,y) = \eps^{a/2}\|x-y\|^{-a}$, and when we compute $\mathbf{P}_{\eps,\alpha}$ with this kernel the $\eps^{a/2}$ factor cancels due to the row normalization in (\ref{eq:diffmaps}), apparently leading to $\mathbf{P}_{\eps,\alpha}$ being $\eps$-independent. But $h(x) = x^{-a/2}$ is neither bounded nor compact, hence we must introduce cutoff and saturation values, i.e.~$h(x) = \min\{h_{\rm max}, x^{-a/2}\mathbf{1}_{x\leq r}\}$ (actually, a mollified version of this, such that~$h\in C^2(\R^+)$). This reintroduces the $\eps$-dependence of $\mathbf{P}_{\eps,\alpha}$ via
\[
\mat{P}_{\eps,\alpha}(i,j) \neq 0\quad\Leftrightarrow\quad \|x^i-x^j\|^2 \le \eps r\,.
\]
which essentially means that we trade in the ``noise variance'' parameter~$\eps$ for a ``proximity'' parameter~$\eps r$. Due to computational reasons, one introduces such a cutoff parameter in practice anyway, since a large data set would render manipulation with fully occupied matrices impossible. But the message here is that such a parameter is actually necessary for mathematical reasons: in order for $\eps^{-1}(\mathbf{P}_{\eps,\alpha} - \mathbf{I})$ to converge to the scaled Laplace operator, as in~\eqref{eq:Dmaps_eps_convergence}, the proximity parameter must be scaled to~$0$ as~$m\rightarrow \infty$, such that~$m\eps^{d+4}/\log m\to\infty$, cf~Theorem~3 in Ref.~\cite{Hein2005}.

\begin{remark} \label{rem:trajmeths}
In contrast to our approach (\emph{first} compute a graph Laplacian for every time slice, \emph{then} perform temporal averaging), the approach in Ref.~\cite{HaEtAl15} computes a \emph{dynamical distance}~$r_{ij}$ by performing a temporal average of the Euclidean distance first. Using the dynamical distance and the weights~$r_{ij}^{-1}$, they construct a single graph Laplacian and perform spectral clustering with it. These weights would correspond to a kernel base function\footnote{To achieve boundedness at the origin, the authors in Ref.~\cite{HaEtAl15} set the diagonal terms to~$r_{ii}^{-1} = K$, where~$K\gg 1$ is some large constant.}~$h(x) = x^{-1/2}$ in our case (note that we insert the squared Euclidean distances into~$h$). Their method uses a cutoff parameter as well. The main difference to our method is that they perform time-averaging before setting up the graph Laplacian, and these two operations do not commute.
Froyland and Padberg-Gehle~\cite{FrPa15} use a fuzzy c-means clustering method on the dynamical distances directly. Hereby, their dynamical distances are squared Euclidean distances between the trajectories embedded into the high-dimensional space~$\R^{dT}$, where~$d$ is the data dimension and~$T$ is the number of time slices.
\end{remark}

\section{Numerical examples} \label{sec:numerics}

\subsection{Double gyre} \label{ssec:doublegyre}

We consider the non-autonomous system~\cite{FrPa14}
\begin{equation}
\begin{aligned}
\dot x &= -\pi A \sin\left(\pi f(t,x)\right) \cos(\pi y) \\
\dot y &= \pi A \cos\left(\pi f(t,x)\right) \sin(\pi y)\frac{df}{dx}(t,x),
\end{aligned}
\label{eq:DoubleGyre}
\end{equation}
where $f(t,x)=\alpha \sin(\omega t)x^2+(1-2\alpha\sin(\omega t))x$. We fix the parameter values $A=0.25$, $\alpha=0.25$ and~$\omega=2\pi$. The system preserves the Lebesgue measure on~$\set{X} = [0,2]\times [0,1]$.
Equation~\eqref{eq:DoubleGyre} describes two counter-rotating gyres next to each other (the left one rotates clockwise), with the vertical boundary between the gyres oscillating periodically. The period of revolution of the gyres varies with the distance from the ``center'', and is, on average, about 5 time units.

First, we consider a data-rich case. We simulate 20000 trajectories, with initial states from a~$200\times 100$ grid of~$\set{X}$, with position information obtained every~$0.1$ time instances from initial time~$0$ to final time~20. Thus~$d=2$,~$m=20000$, and~$T=201$.

We construct the space-time diffusion matrix~$\tilde{\mat{Q}}_{\eps}$ for various values of~$\eps$, and show the dominant spectrum of~$\mat{L}_{\eps} = \eps^{-1}(\tilde{\mat{Q}}_{\eps}-\mat{I})$ in Figure~\ref{fig:DG_evals}.

\begin{figure}[htb]
\centering
\includegraphics[width = 0.5\textwidth]{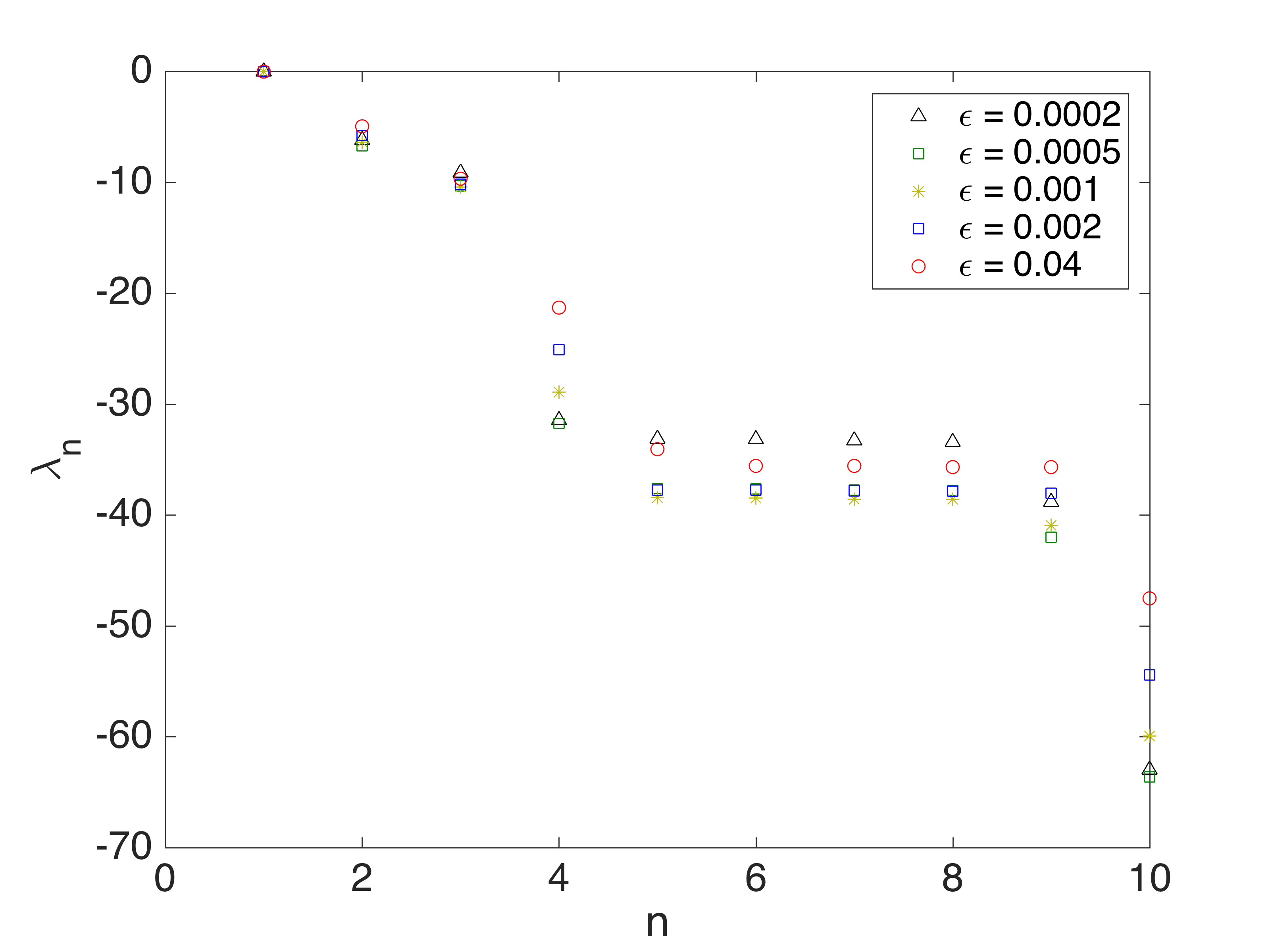}
\caption{Scaled eigenvalues of the space-time diffusion matrix.}
\label{fig:DG_evals}
\end{figure}

We can identify a gap after three eigenvalues, and expect to find~$\Lambda=3$ coherent sets. Extracting~three clusters yields for every~$\eps$ the coherent sets shown in Figure~\ref{fig:DG_3rich}.

\begin{figure}[htb]
\centering
\includegraphics[width = 0.49\textwidth]{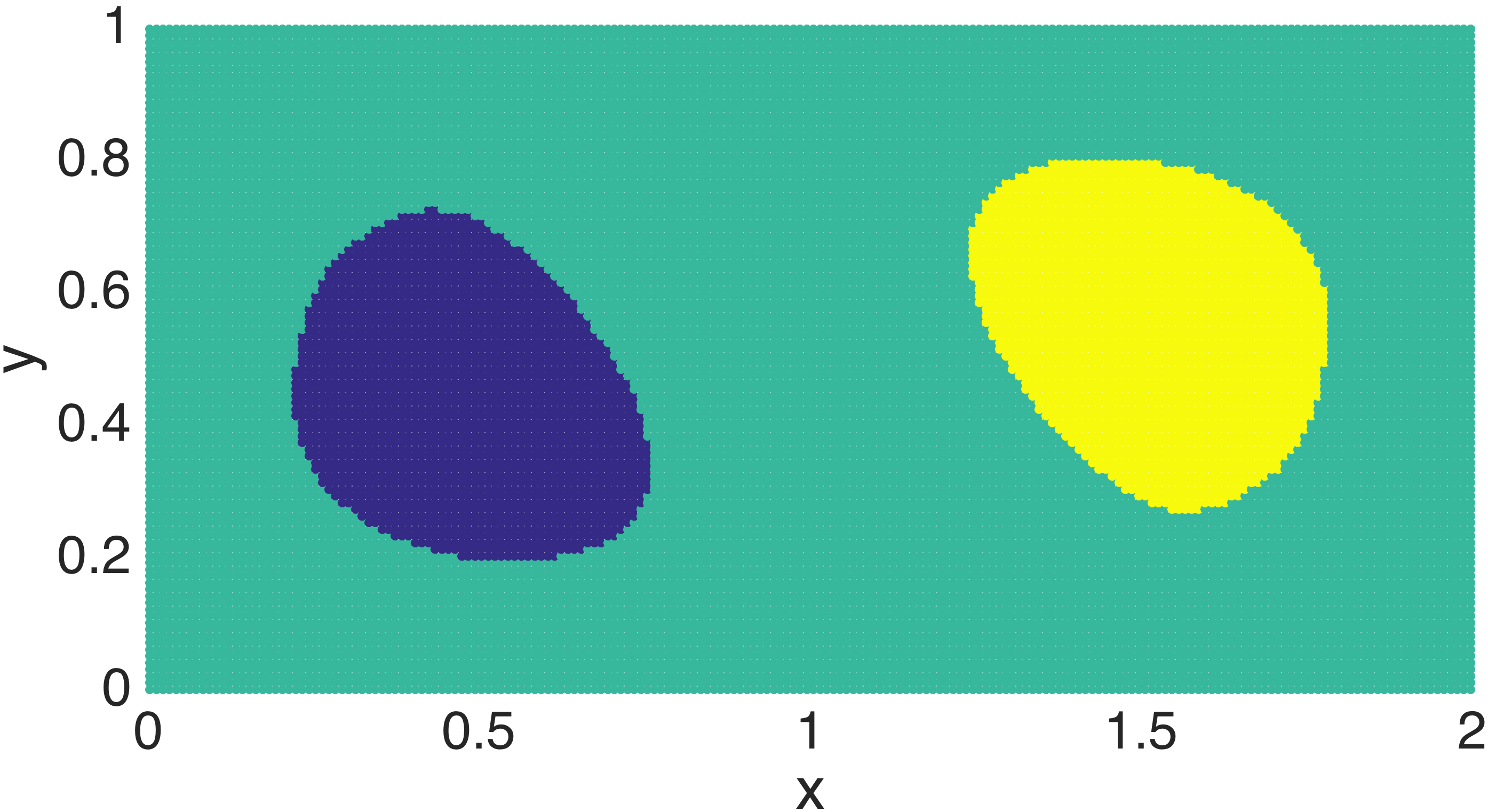}
\hfill
\includegraphics[width = 0.49\textwidth]{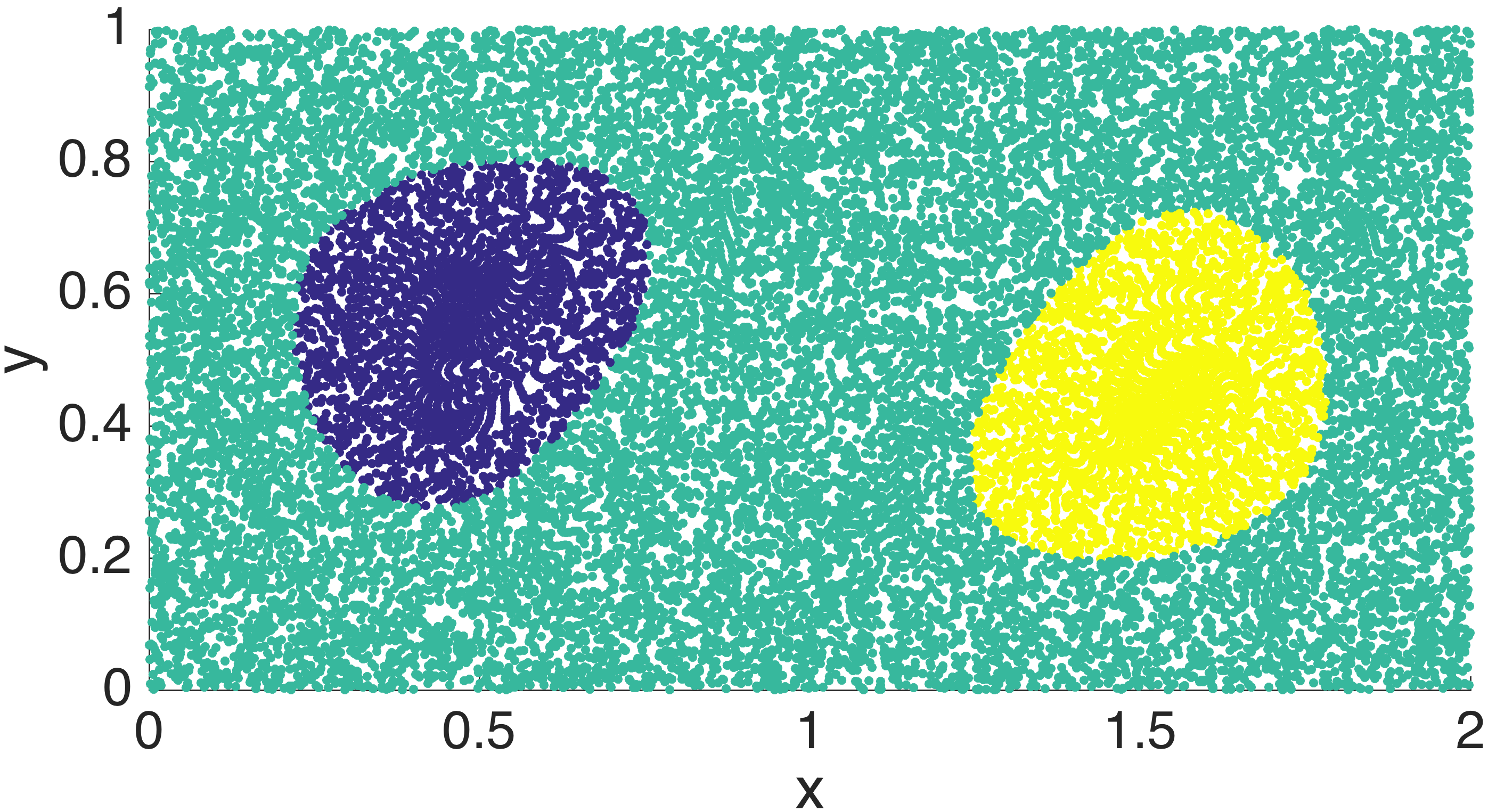}
\caption{Result of 3-clustering the double gyre trajectory data, shown at initial time ($t=0$; $1^{\rm st}$ time slice), and half a period before final time ($t=19.5$; $196^{\rm th}$ time slice). Multimedia view online.}
\label{fig:DG_3rich}
\end{figure}

We observe an interesting ``bifurcation'' in the 2-clustering of the~$2^{\rm nd}$ eigenvector~$\Xi_2$, when decreasing~$\eps$. Figure~\ref{fig:DG_2clust_diffeps} shows the eigenvectors and corresponding 2-clusterings for~$\eps=0.004$ and~$\eps = 0.0002$.

\begin{figure}[htb]
\centering
\includegraphics[width = 0.49\textwidth]{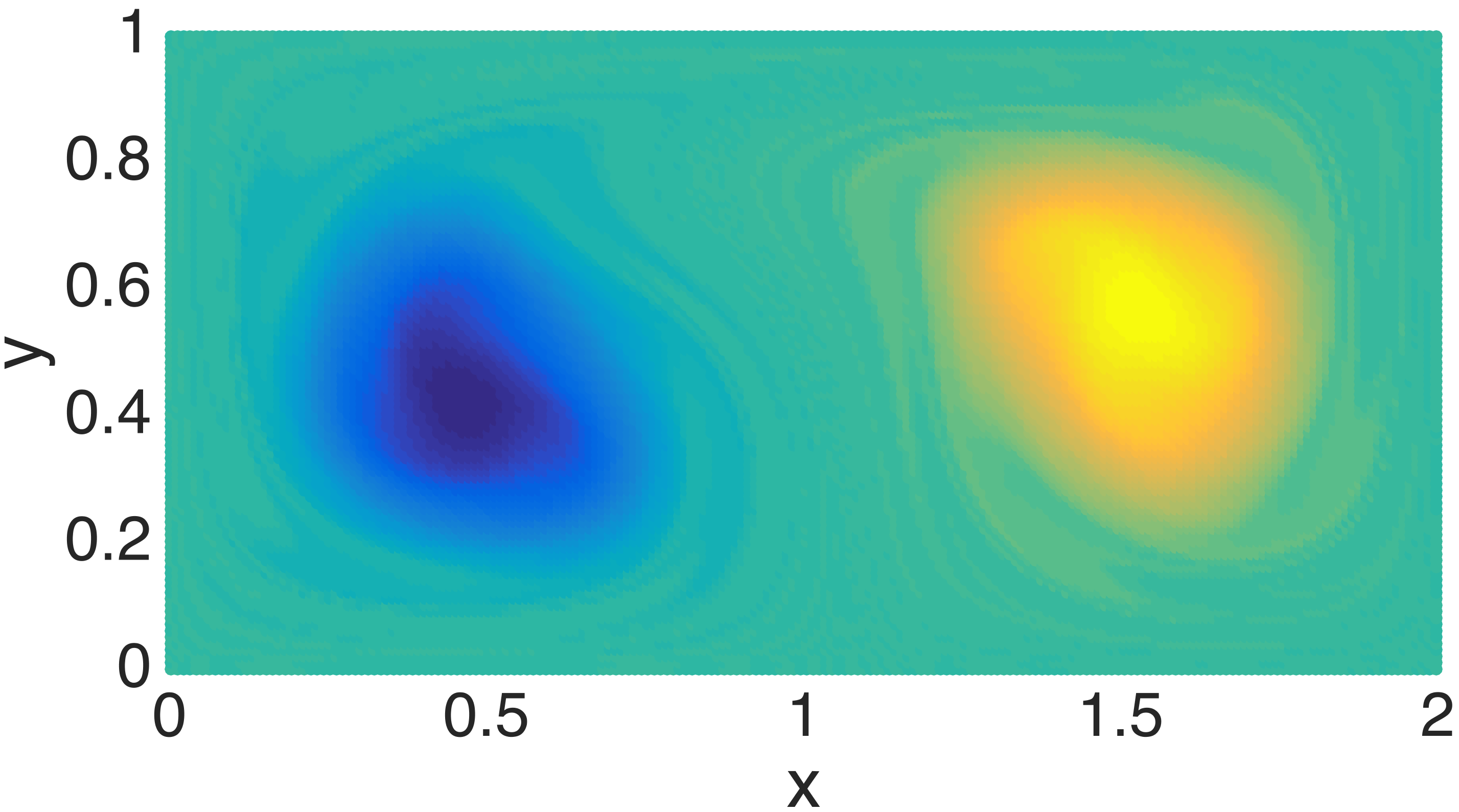}
\hfill
\includegraphics[width = 0.49\textwidth]{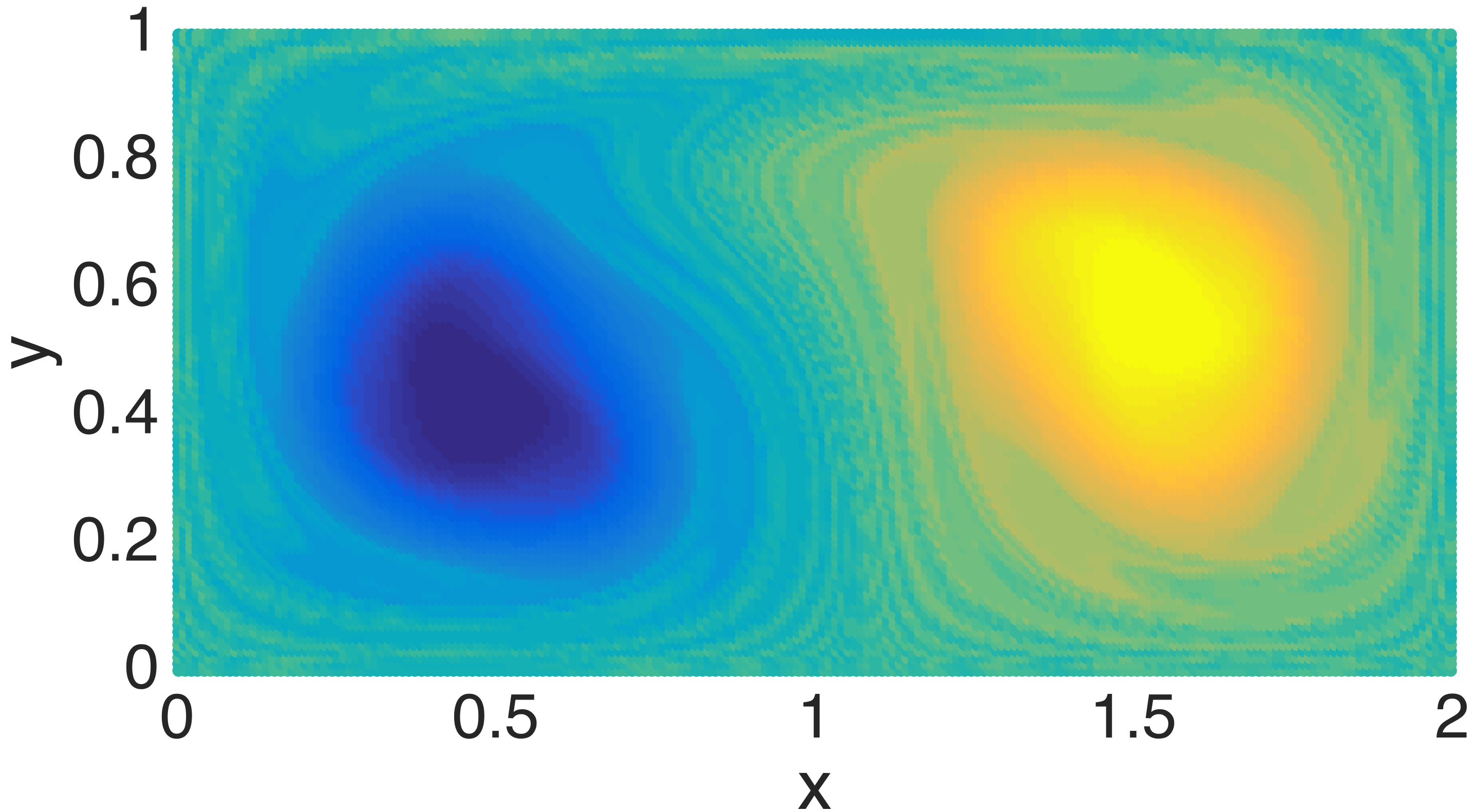}
\\
\includegraphics[width = 0.49\textwidth]{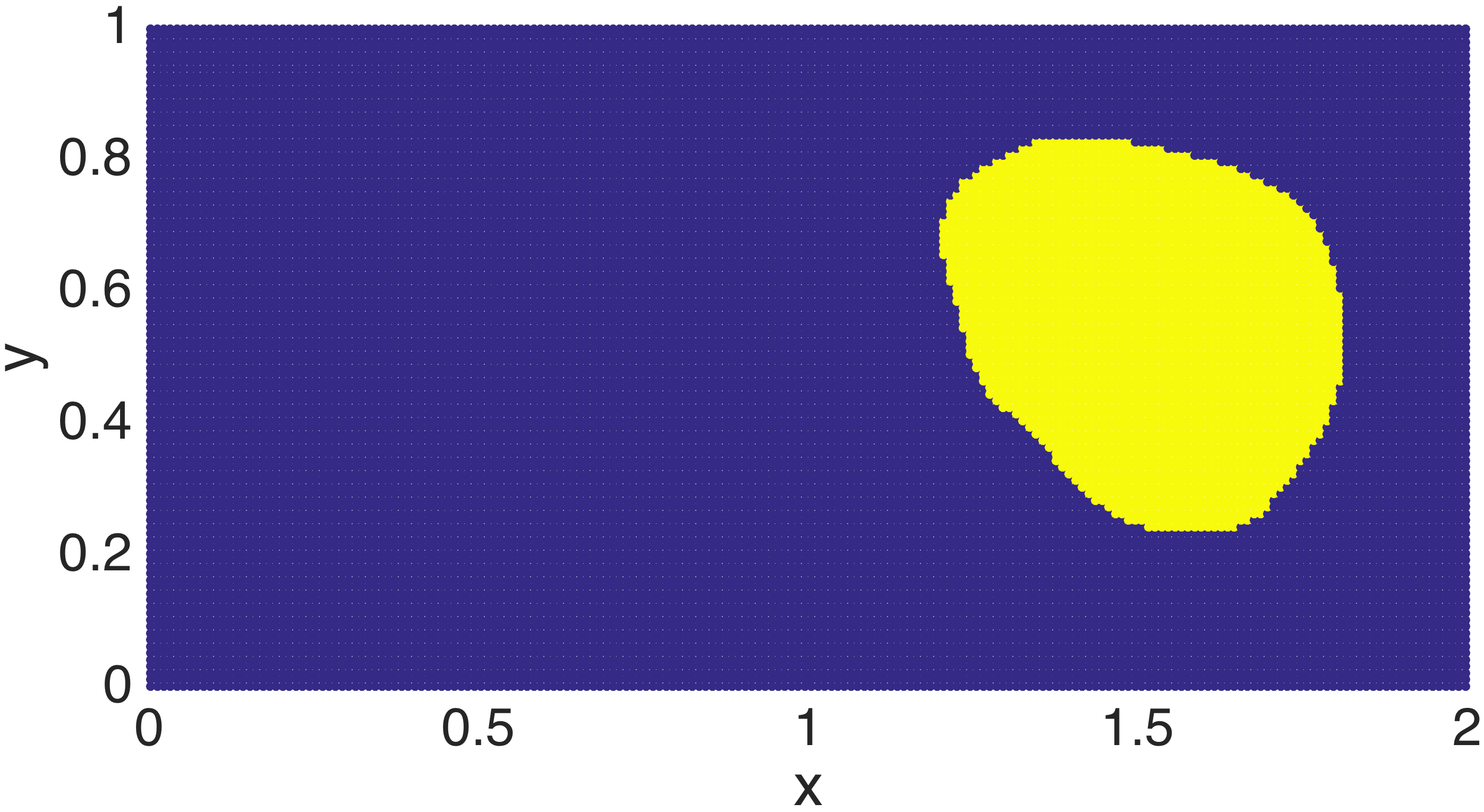}
\hfill
\includegraphics[width = 0.49\textwidth]{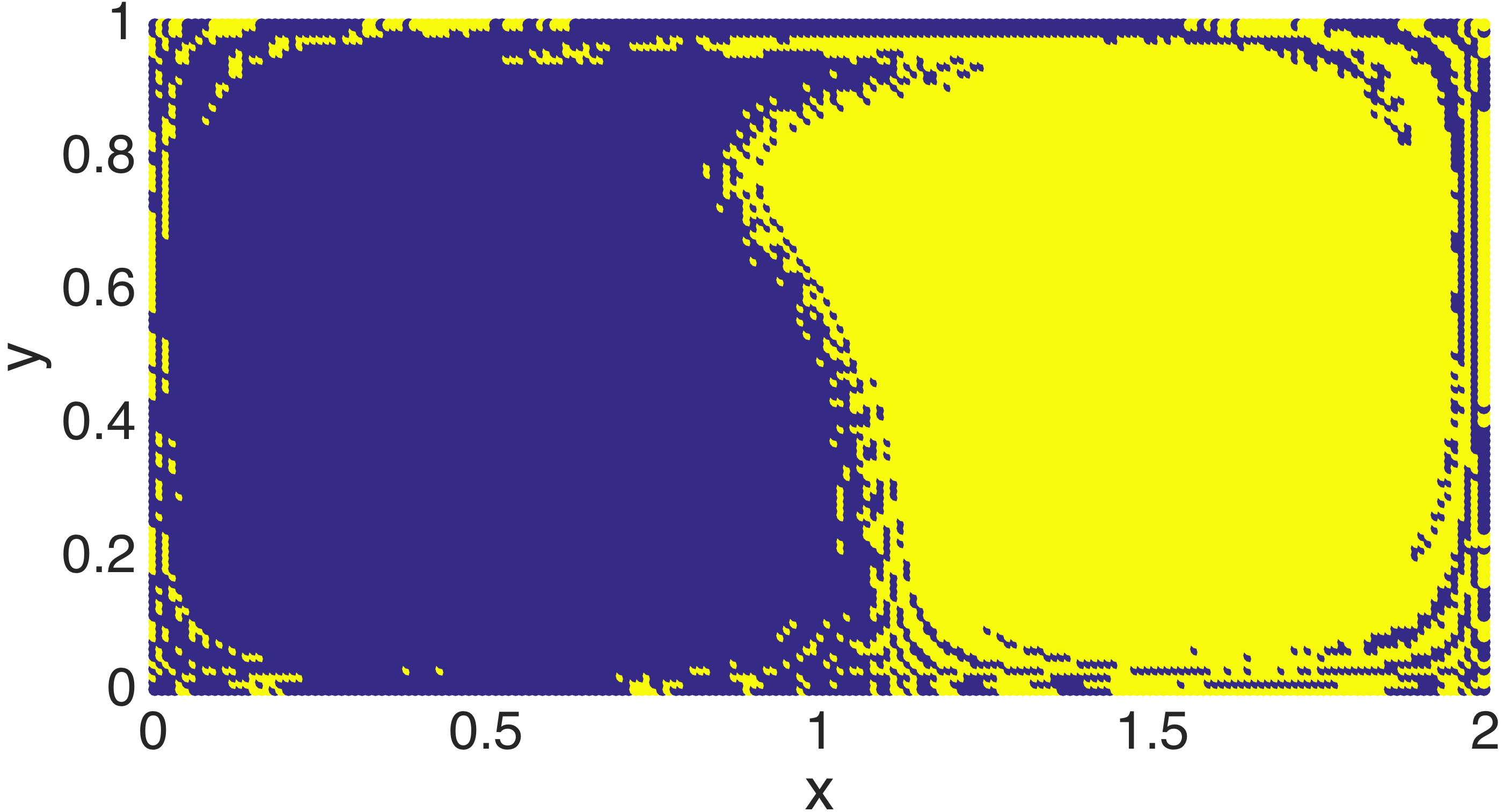}
\caption{Top: second eigenfunction~$\Xi_2$ of~$\tilde{\mat{Q}}_{\eps}$ for~$\eps = 0.0002$ (left) and~$\eps = 0.004$ (right) at initial time $t=0$. Bottom: corresponding 2-clusterings. Multimedia view online.}
\label{fig:DG_2clust_diffeps}
\end{figure}

For the smaller diffusion value, one of the gyres gets separated from the rest of phase space to yield the most coherent splitting. For the larger diffusion, however, the separation is along the stable manifold of the hyperbolic periodic orbit on the~$\{y=0\}$ boundary of~$\set{X}$. This latter case has been observed on different occasions both with transfer operator based methods and Lagrangian drifter-based techniques~\cite{FrPa14,FrPa15}. The transition between the clusters for changing~$\eps$ has been reported for almost invariant sets in~\cite[pp~27]{FrPa14}. The reason for this bifurcation is the following. If no diffusion is present, the central parts of the two gyres (from now on ``gyre cores'') are regular regions of the flow (invariant tori of the time-1 flow map, cf~\cite[Figure~1]{FrPa14}), hence they are perfectly coherent. Meanwhile, convective transport between the regions~$\{x\le 1\}$ and~$\{x\ge 1\}$ occurs along the unstable manifold of the periodic orbit on~$\{y=1\}$ (which is the image of the stable manifold of the periodic point on~$\{y=0\}$ under point reflection with respect to the point~$(1,0.5)$), which, close to~$\{y=0\}$, meanders back and forth between the two regions. Thus, if convective transport dominates diffusion, the gyre cores are the most coherent sets. However, if we increase diffusion, trajectories can leave the gyre cores. Note that there is also diffusive transport across the separatrix~$\{x=1\}$, but it is less than transport across the gyre core boundaries, because these latter boundaries are longer than the separatrix (this we can see with the naked eye). Hence, if~$\eps$ is large enough that diffusive transport dominates convective transport, the ``left-right'' separation of~$\set{X}$ reveals the most coherent sets. Of course, if diffusion is that large, there is less determinism in the fate of the single trajectories. A hard clustering of the complete state space might not be sensible, and a soft clustering shall be used instead~\cite{FrPa15}. We remark, that for e.g.\ metastability analysis of molecular dynamics, where diffusion plays a decisive role, the concept of ``soft membership'' in dynamical coarse graining is well established~\cite{SchSa13}.

We note that techniques based on Galerkin-type projections of transfer operators were always reported to reveal the ``left-right'' separation as most coherent splitting~\cite{FrPa14,WRR15}. This is most likely due to the smoothing effect of the projection onto basis functions: they introduce numerical diffusion~\cite{FrPHD,FrJuKo13}, which seems to be over our ``bifurcation threshold'' at~$\eps\approx 0.004$.

While the computational cost of Galerkin projection methods decreases with decreasing number of basis functions (which, in general leads to \emph{increased} numerical diffusion), the computational effort of our method decreases with decreasing~$\eps$, since less points are~$\mathcal{O}(\sqrt{\eps})$-close to each other, and this sparsifies~$\mat{Q}_{\eps}$. While for~$\eps = 0.004$ around~$11\%$ of the entries of~$\tilde{\mat{Q}}_{\eps}$ are nonzero, for~$\eps = 0.0002$ the fraction of nonzeros is~$0.6\%$.

Looking for additional coherent sets, we cluster the eigenvector data into~$4$ clusters, shown in Figure~\ref{fig:DG_4rich}. In the large-diffusion case we find the gyre cores along with the left-right separation according to the stable manifold as coherent sets. In the small-diffusion case we find further subdivision of the regular region. The interested reader may compare this result with that in~\cite{FrKo15}.

\begin{figure}[thb]
\centering
\includegraphics[width = 0.49\textwidth]{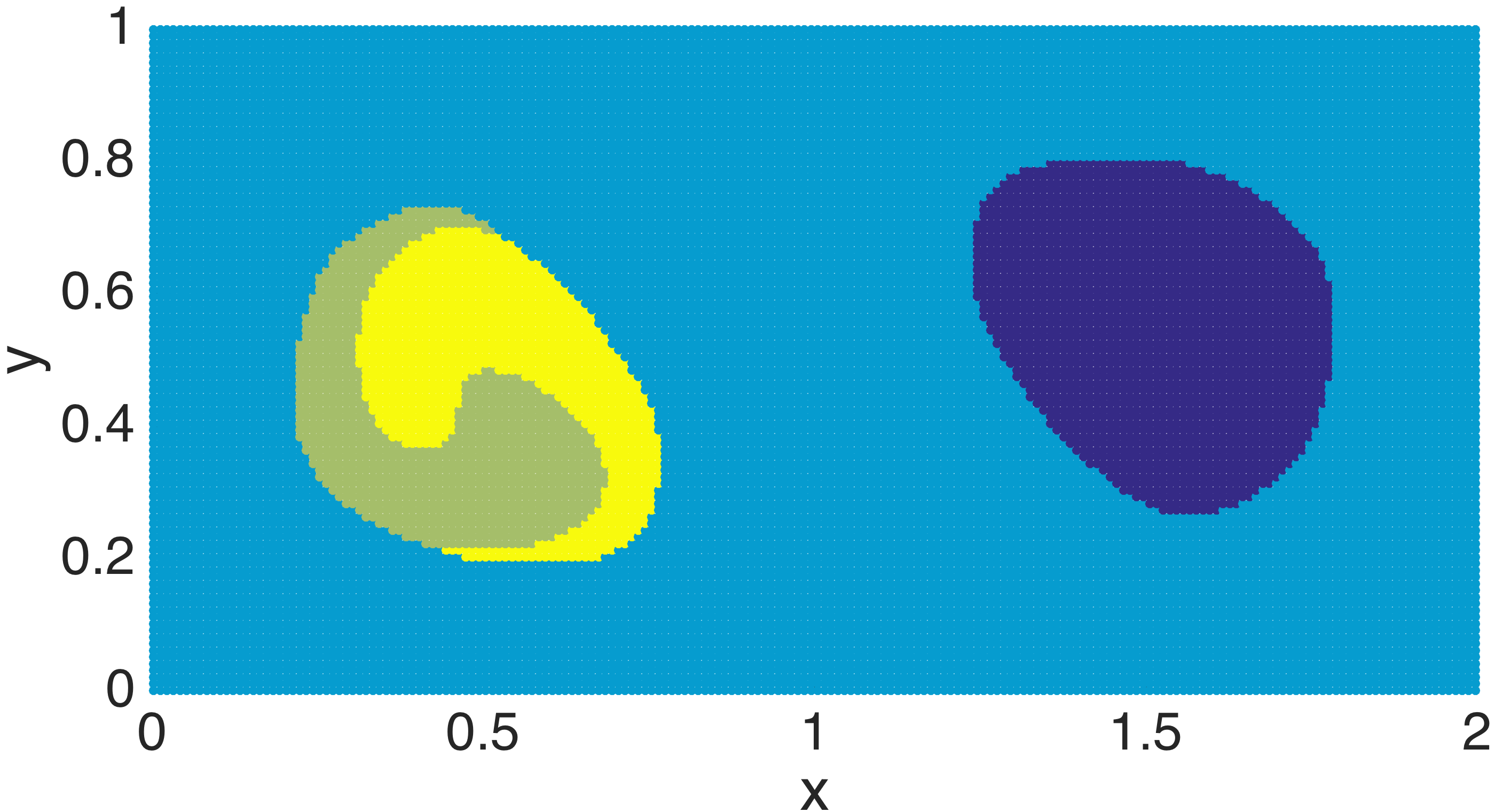}
\hfill
\includegraphics[width = 0.49\textwidth]{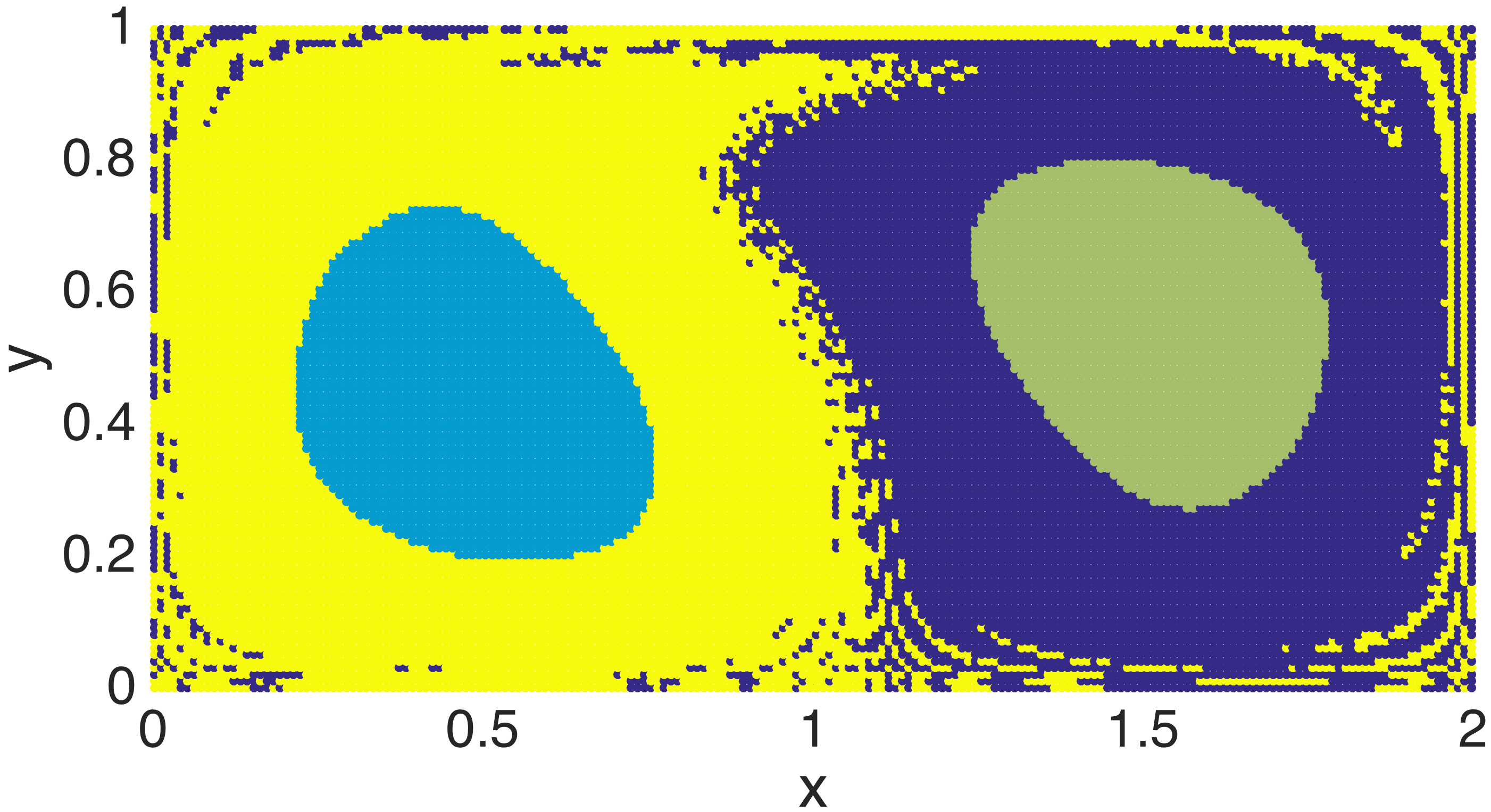}
\caption{Result of 4-clustering the double gyre trajectory data, shown at initial time ($t=0$; $1^{\rm st}$ time slice). Left: $\eps = 0.0005$, right: $\eps = 0.004$.}
\label{fig:DG_4rich}
\end{figure}

We turn now to a sparse, incomplete data case; cf also~\cite{FrPa15}. We take our previous data set, and pick~$m=500$ trajectories randomly, and discard the rest. Then, we destroy~$80\%$ of the remaining data, by setting randomly (both in time and space) entries to~\texttt{NaN} (``Not a Number'' in Matlab). To balance the sparsified neighborhoods due to the loss of data, we set~$\eps = 0.01$. Then we assemble the space-time diffusion matrix, and carry out the clustering of its eigenvectors for~2 and~3 clusters, respectively. The results are shown in Figure~\ref{fig:DG_sparse}.

\begin{figure}[htb]
\centering
\includegraphics[width = 0.49\textwidth]{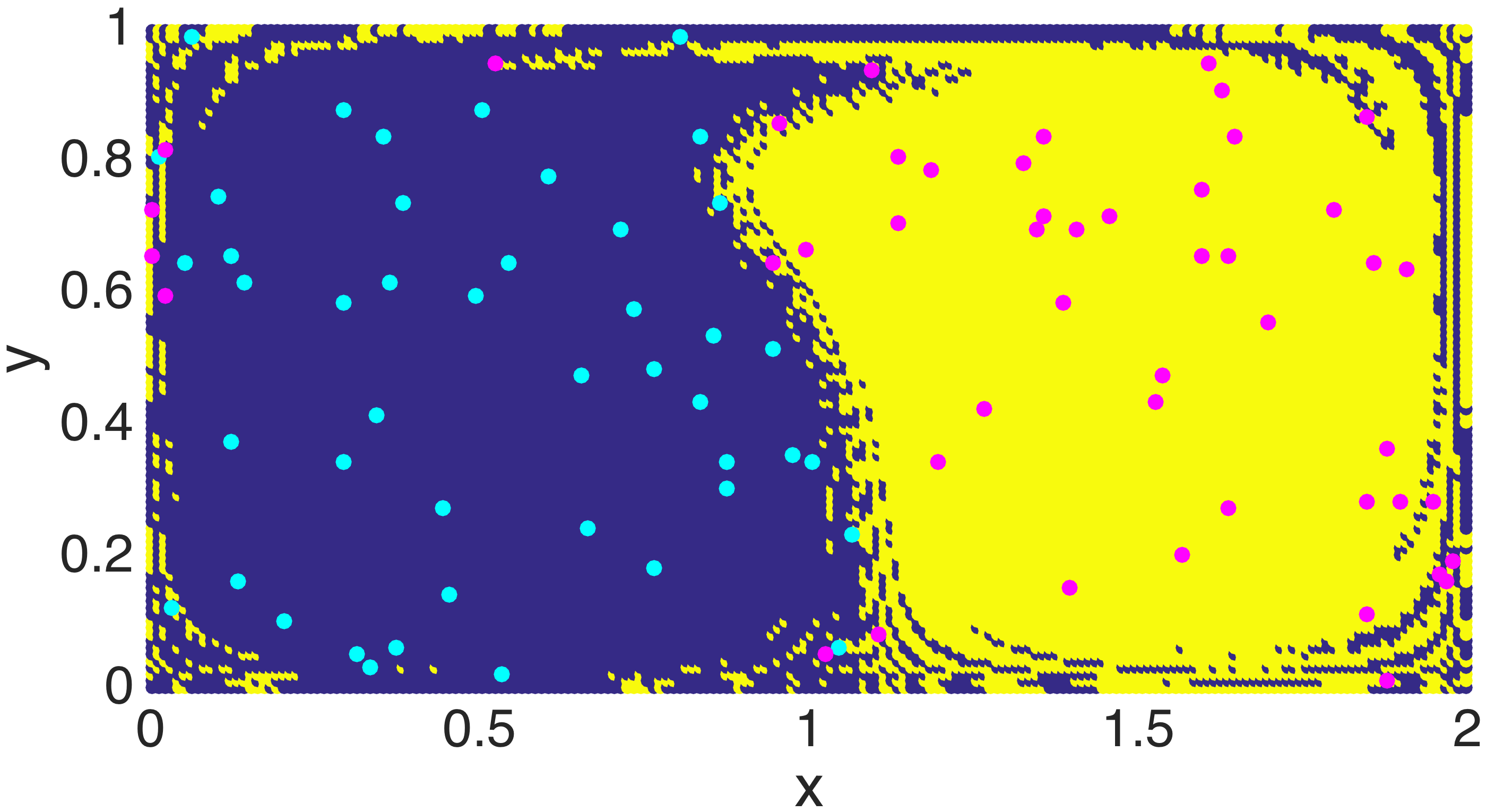}
\hfill
\includegraphics[width = 0.49\textwidth]{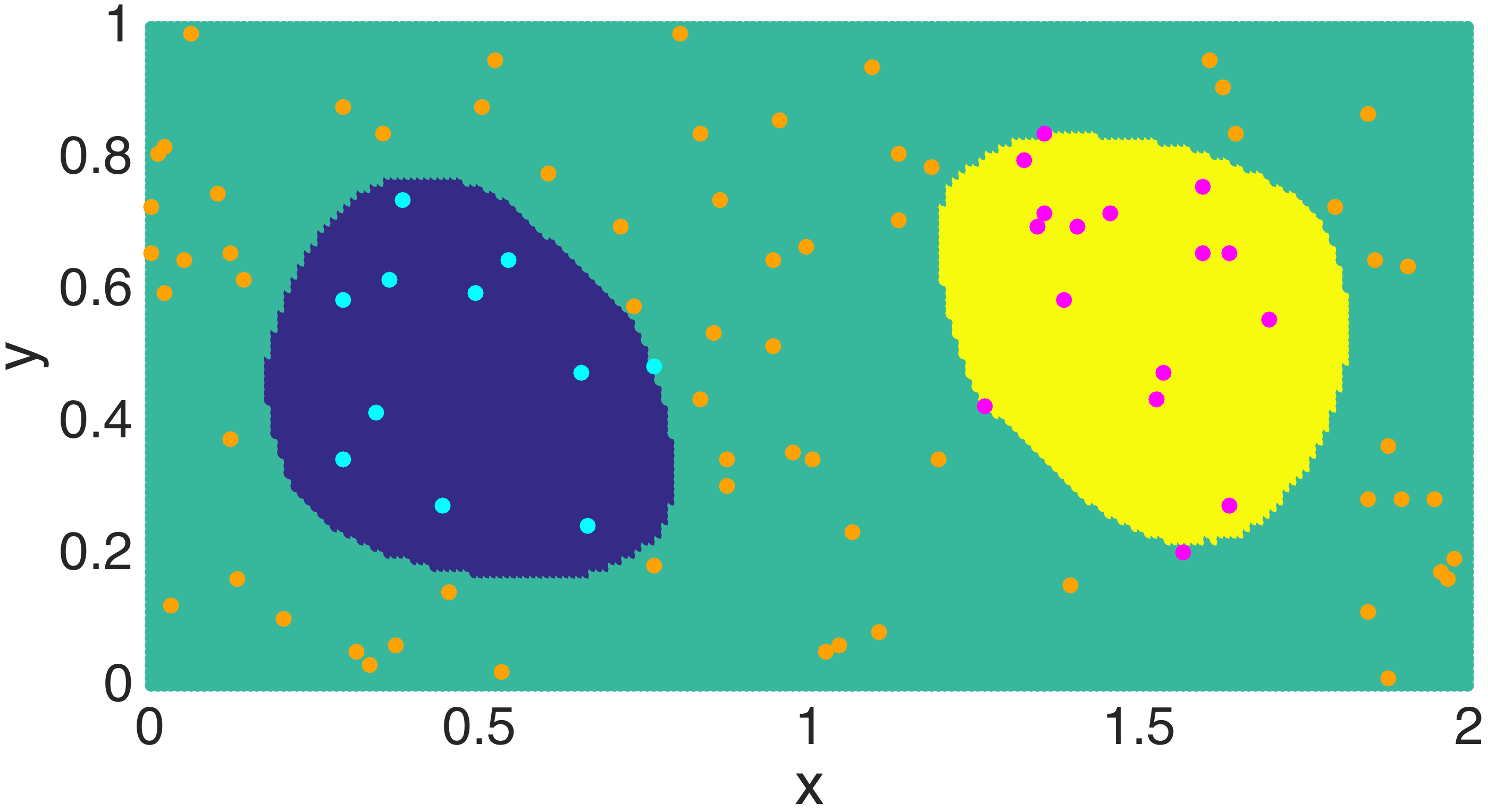}
\caption{Results for the sparse incomplete data set, compared with the results of the full data case from before. In the sparse case, $97.5\%$ of the previous trajectories is discarded, and~$80\%$ of the remaining data is destroyed. The sparse incomplete data clusters are represented by the colors cyan and magenta (2-clustering, left figure), and cyan, magenta, orange (3-clustering, right figure), respectively.}
\label{fig:DG_sparse}
\end{figure}

There, the original 2- and 3-clusterings in the large diffusion case are overlayed by the clustering of the sparse incomplete data. We observe an excellent agreement; note that in some cases even the filaments of the one cluster reaching well into the other are correctly identified.

\subsection{Bickley jet}

We consider a perturbed Bickley jet as described in~\cite{RypEtAl07}. This is an idealized zonal jet approximation in a band around a fixed latitude, assuming incompressibility, on which three traveling Rossby waves are superimposed.
The dynamics is given by $(\dot x, \dot y) = (-\frac{\partial\Psi}{\partial y}, \frac{\partial\Psi}{\partial x})$, with stream function $\Psi(t,x,y) = -U_0 L \tanh\big(y/L\big)
 + U_0L\,\mathrm{sech}^2\big(y/L\big) \sum_{n=1}^3 A_n\cos\left(k_n\left(x- c_n t\right)\right)$. The constants are chosen as in Section~4 in Ref.~\cite{RypEtAl07}, the length unit is Mm (1 Mm = $10^6$ m), the time unit is days. In particular, we set~$k_n = 2n/r_e$ with~$r_e = 6.371$, $U_0 = 5.414$, and~$L = 1.77$. The phase speeds~$c_n$ of the Rossby waves are~$c_1 = 0.1446U_0$, $c_2 = 0.2053U_0$, $c_3 = 0.4561U_0$, their amplitudes~$A_1 = 0.0075$, $A_2 = 0.4$, and~$A_3 = 0.3$. The system is usually considered on a state space which is periodic in the~$x$ coordinate with period~$\pi r_e$; we will, however, not make any use of this knowledge in our computations.

We advect $m= 12000$ particles with initial conditions at $t_0=0$ on a uniform grid inside the domain $[0,20]\times [-3,3]$. We save the positions of the particles at the $T=401$ time frames $I_t = \{0, 0.1,\ldots, 40\}$, during which each particle traverses the cylinder~$\sim 5$ times. With this data as input, we compute $\mathbf{Q}_\eps$ according to \eqref{eq:sptdm_matrix}. The dominant spectrum of $\mathbf{L}_\eps = \eps^{-1}(\mathbf{Q}_\eps - \mathbf{I})$ for different values of $\eps$ is shown in Figure \ref{fig:bickley_evals} on the left. The $\lambda_n$ for $n\leq 9$ are stable for $0.01\leq \eps \leq 0.05$. We choose~$\eps = 0.02$, yielding a sparsity of $4.5\%$. The unifying features of the spectra are large spectral gaps after the 2nd, 3rd and 9th eigenvalue, which indicates that clusterings with $\Lambda=2,3$ or $9$ are all possible. The eigenfunctions $\Psi_2, \Psi_3$ and $\Psi_4$ are shown in Figure \ref{fig:bickley_sets} on the right at time~$t=20$. Clearly,~$\Psi_2$ and~$\Psi_3$ pick out the meandering jet stream region in the middle, which constitutes the strongest dynamical boundary in this system, and the six vortices.~$\Psi_4$ distinguishes between two of the six vortices,~$\{\Xi_5,\ldots, \Xi_9\}$ distinguish between the others.

\begin{figure}[h]
\centering
\includegraphics[width = 0.49\textwidth]{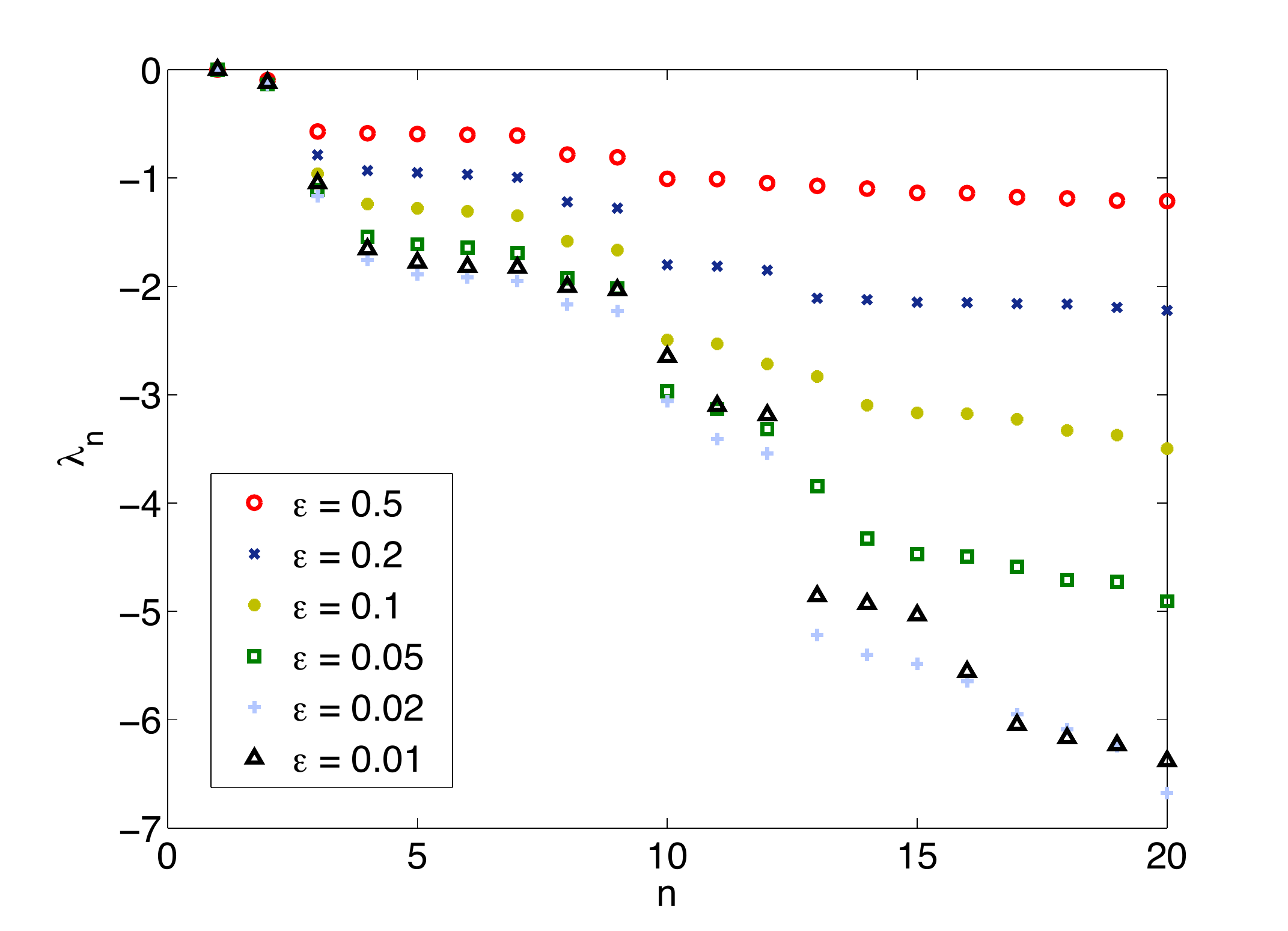}
\hfill
\includegraphics[width = 0.49\textwidth]{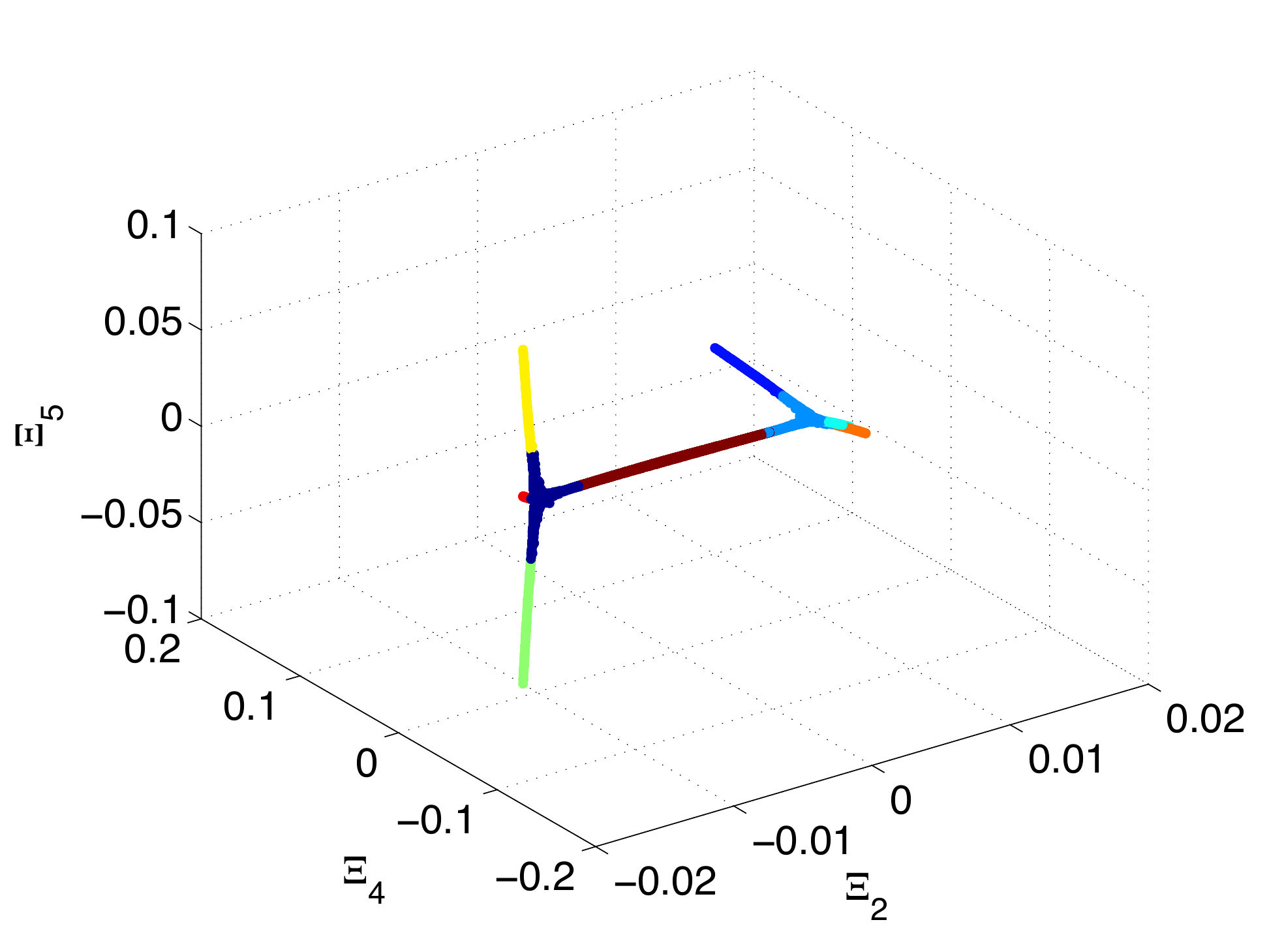}
\caption{Bickley jet, eigenvalues (left) and embedding using the eigenfunctions $\Xi_2$, $\Xi_4$ and $\Xi_5$ (right).}
\label{fig:bickley_evals}
\end{figure}


\begin{figure}[htb]
\centering
\includegraphics[width = 0.52\textwidth]{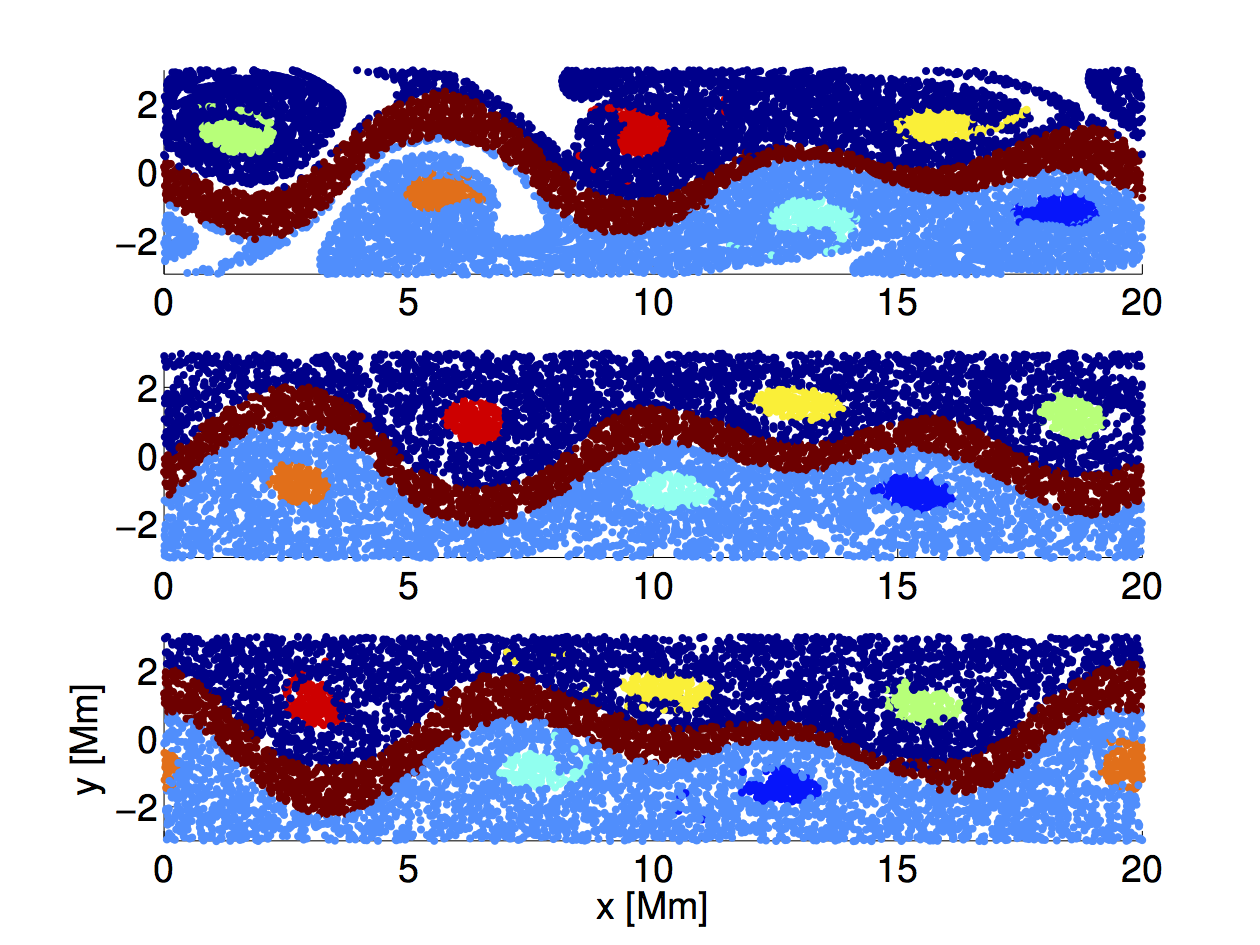}
\hfill
\includegraphics[width = 0.46\textwidth]{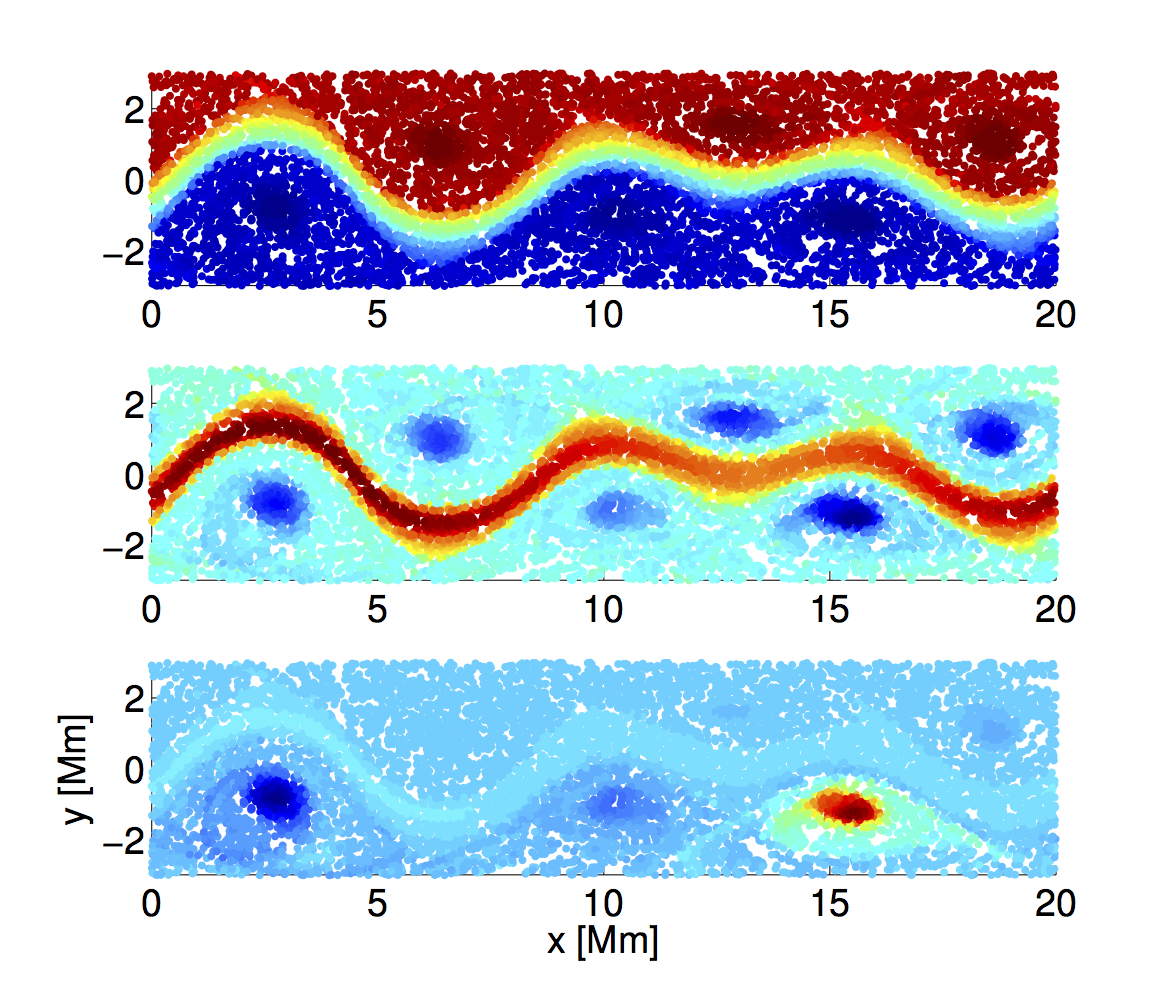}
\caption{Left, top to bottom: Bickley jet, clusters at times $t=5$, $t=20$ and $t=35$, for~$\Lambda = 9$ (multimedia view online). Right: top to bottom: Eigenfunctions~$\Xi_2,~\Xi_3$ and~$\Xi_4$ at~$t=20$.}
\label{fig:bickley_sets}
\end{figure}

The clustering for $\Lambda = 9$ is shown in Figure \ref{fig:bickley_sets} on the left at times $t=5$, $t=20$ and $t=35$. The long and narrow cluster in the jet stream region stays perfectly coherent for the whole time interval, while the six clusters in the vortex region loose some mass. This is in perfect agreement with the eigenvalue structure in Figure \ref{fig:bickley_evals}. For~$\Lambda = 3$, the six clusters in the vortex region merge with the corresponding background cluster (not shown). A movie showing the full time evolution can be found in the supporting information. In Figure~\ref{fig:bickley_evals} on the right, the~$m=12000$ trajectories are embedded as points in~$\mbox{span}\{\Xi_2, \Xi_4, \Xi_5\}$ and coloured according to the clustering in Figure~\ref{fig:bickley_sets}. This embedding highlights the connectivity structure of the clusters.

Note that we only use the Euclidean metric as input; no information about the global cylindrical geometry of the state space is given. The fact that our method extracts the jet stream region clearly shows that it learns the cylindrical geometry and highlights dynamical features that are encoded in the time-ordering of the data.
A purely geometrical, heuristic method based on the Euclidean metric alone will always struggle to identify the long, narrow and meandering clusters that we find in the data.

\subsection{Ocean drifter data set}

To test our method on real world data, we consider a dataset of ocean drifters from the Global Ocean Drifter Program available at AOML/NOAA Drifter Data Assembly Center (\url{http://www.aoml.noaa.gov/envids/gld/}). We focus on the years 2005-2009 and restrict to those drifters that have a minimum lifespan of one year within this timespan. We record the position of these 2267 drifters every month, i.e.\ our trajectories have~60 time frames. This is the same dataset which has been studied in~\cite{FrPa15}.

The drifter data is sparse: The average lifetime of a drifter is only~23 months, and there are also gaps in observations where a drifter location failed to be recorded. On average, only~$38\%$ of the drifters are available at any given time instant. The dataset is also extremely sparse spatially, with only~2267 drifters covering the global ocean, it serves therefore as a good test case for our method. Additionally, we do not use any metric that is adapted to the sphere. We simply consider the drifters as data points in~$\R^3$ and take the Euclidean metric in~$\R^3$. Hereby we scale all distances such that radius of the Earth is equal to one.

\begin{figure}[h]
\centering
\includegraphics[width = 0.49\textwidth]{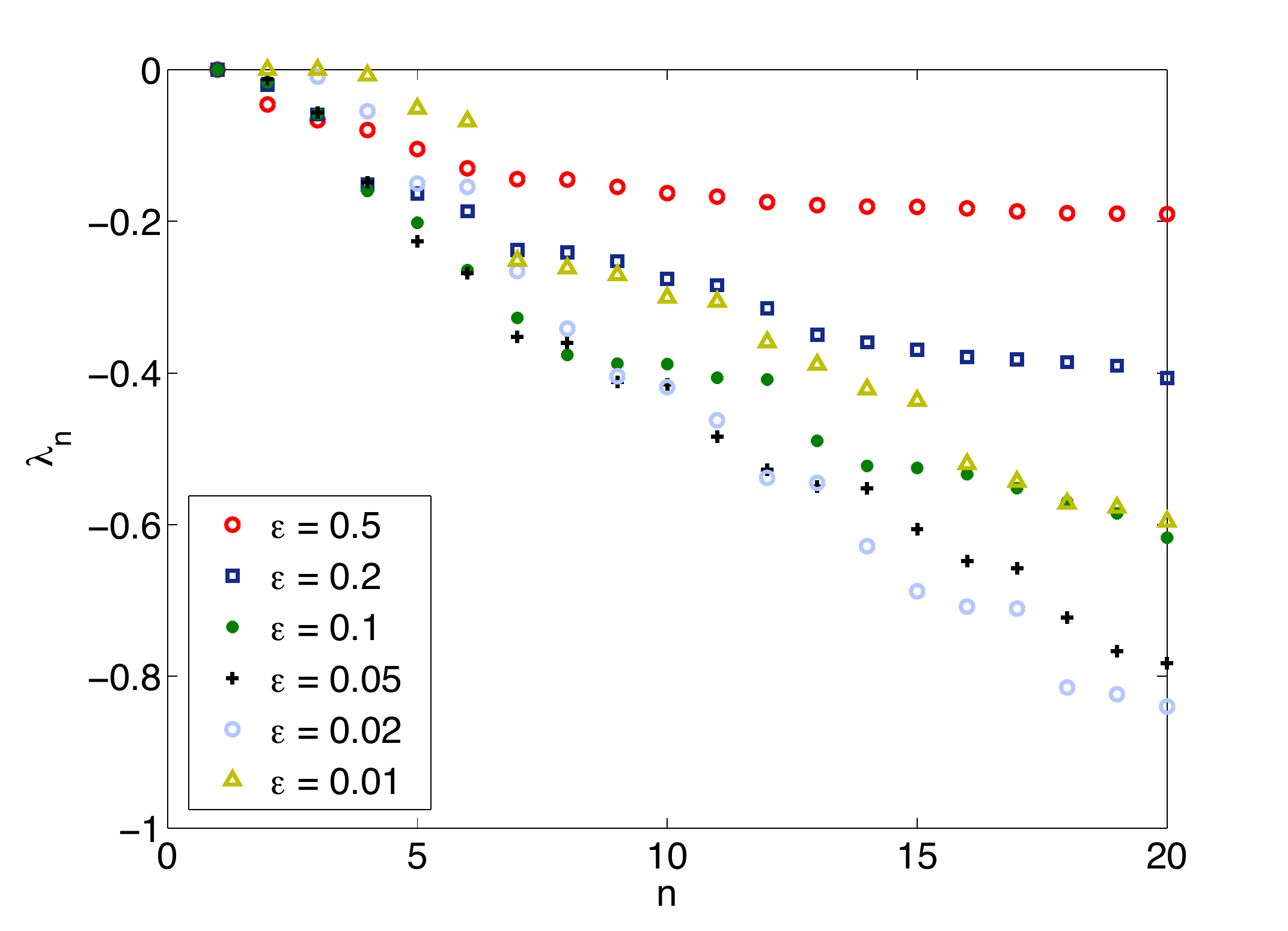}
\hfill
\includegraphics[width = 0.49\textwidth]{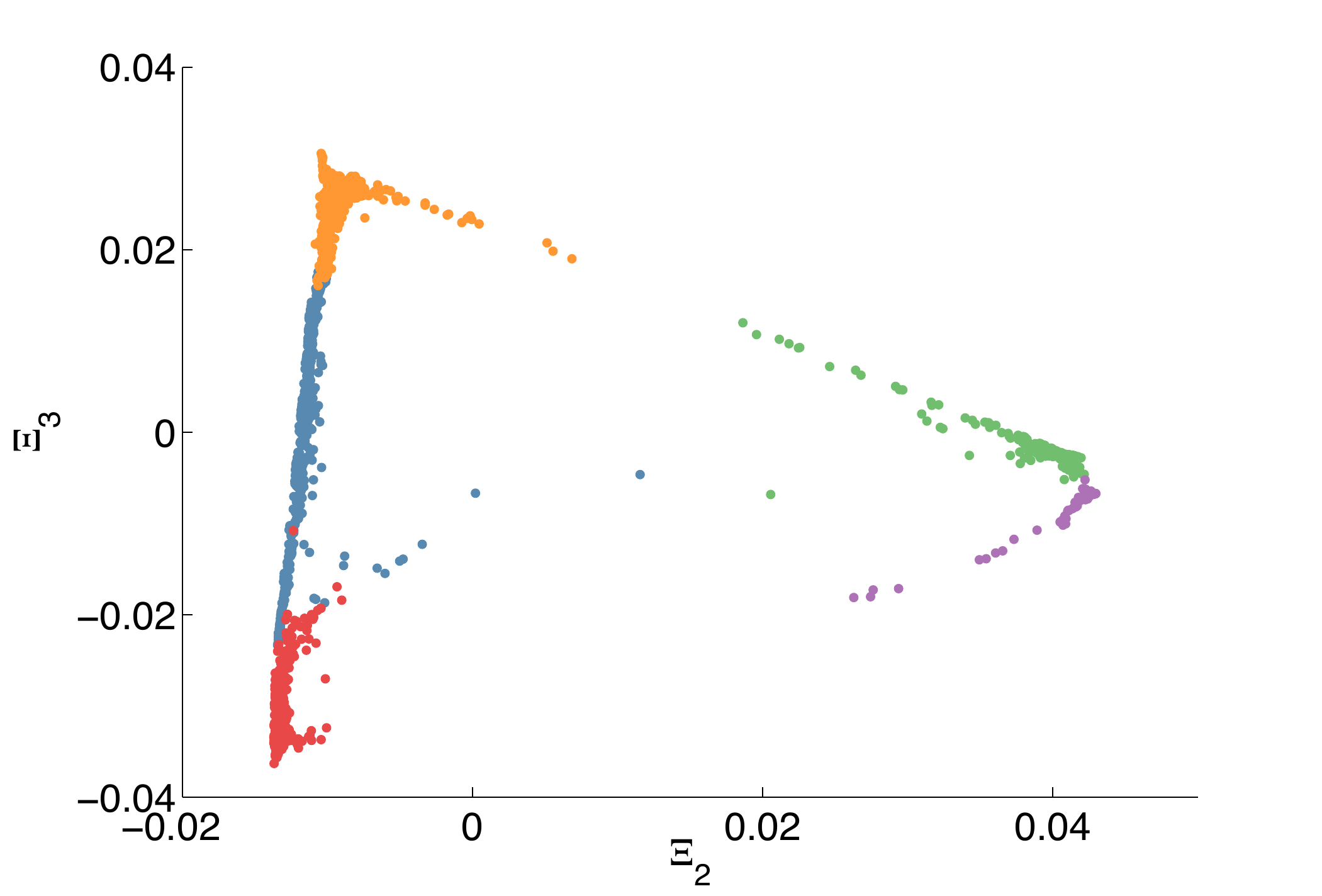}
\caption{Ocean drifter data. Left: Eigenvalues for different $\eps$. Right: Embedding using the eigenfunctions $\Xi_2$ and $\Xi_3$ for $\eps = 0.1$ with coloring according to the clusters Figure \ref{fig:drifter_sets} (red: Northern Pacific, blue: Southern Pacific, Yellow: Southern Atlantic/Indian Ocean, green: Northern Atlantic, purple: Arctic).}
\label{fig:drifter_evals}
\end{figure}

To set~$\eps$, we compute~$\mathbf{Q}_\eps$ for a range of values for $\eps$, the result is shown in Figure~\ref{fig:drifter_evals} on the left. Because the data is so sparse, the spectra show some variation with changing~$\eps$. For~$0.05 \leq \eps \leq 0.2$ they are reasonably close, indicating an optimal balance between the variance and bias terms in \eqref{eq:dynLap}. We choose~$\eps = 0.1$, which leads to a sparsity of~$\mathbf{Q}_\eps$ of $18\%$. There is no clear spectral gap in the data, so we choose~$\Lambda = 5$, as did the authors in~\cite{FrPa15}. The resulting clusters are shown in Figure~\ref{fig:drifter_sets}. To display as much information as possible, we divide the full time span into the four time intervals Jan 2005 -- Mar 2006, Apr 2006 -- Jun 2007, Jul 2007 -- Sep 2008 and Oct 2008 -- Dec 2009. For every time interval, we plot all drifter locations in a single plot and color-code time in each of the plots by color saturation (the darker the color, the ``later'' the drifter location). A movie showing all~60 frames can be found in the supplementary information.

\begin{figure}[th]
\centering
\includegraphics[width = 0.99\textwidth]{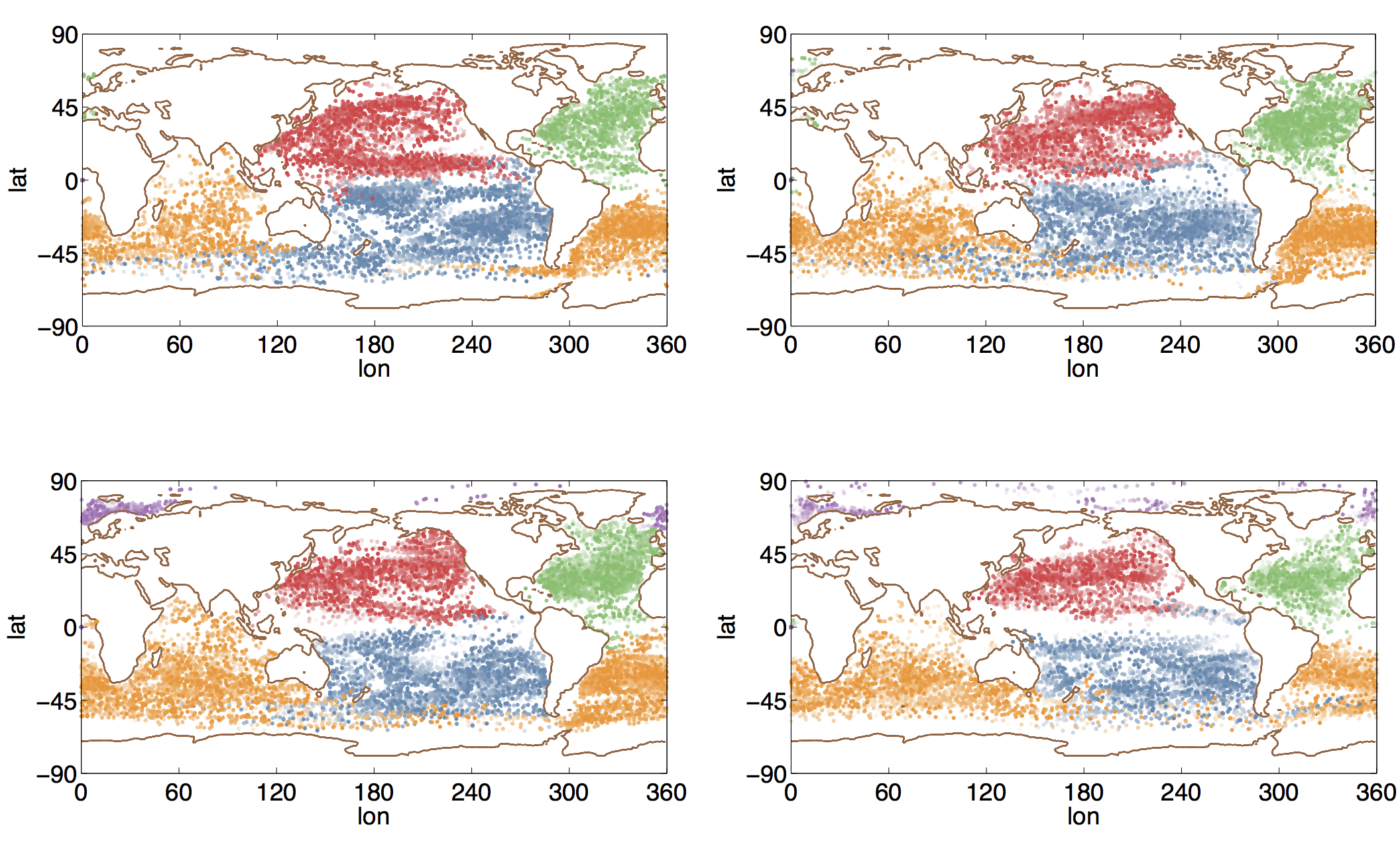}
\hfill
\caption{Ocean drifter data, clusters. Top left: Jan 2005 -- Mar 2006. Top right: Apr 2006 -- Jun 2007. Bottom left: Jul 2007 -- Sep 2008. Bottom right: Oct 2008 -- Dec 2009. Color saturation is proportional to time in the respective~$15$ month time window. Multimedia view online.}
\label{fig:drifter_sets}
\end{figure}

The five clusters we find may be described broadly as the Northern Pacific, the Southern Pacific, the Northern Atlantic, the Southern Atlantic together with the Indian Ocean, and the Arctic Ocean. Boundaries between clusters are in locations where continents and islands form bottlenecks (for example, the boundary between the green and purple cluster is a line between Great Britain and Iceland) and at the equator. In Figure \ref{fig:drifter_evals} on the right, we show the embedding of the 2267 drifters produced by~$\Xi_2$ and~$\Xi_3$. We see that~$\Xi_2$ separates the Arctic and Northern Atlantic from the rest, while~$\Xi_3$ distinguishes between the Northern Pacific, the Southern Pacific and the Southern Atlantic/Indian Ocean. We can also infer connectivity patterns from this plot. Note that there is no connection between the red, the yellow, and the purple clusters, showing that none of the drifters passed trough the Bering Strait and the Indonesian Archipelago, respectively. A few isolated data points hint at a possible connection between the blue and green clusters, this could be due to the vicinity of drifters across the Panama Strait.

The main difference to the result of Froyland and Padberg-Gehle~\cite{FrPa15} is that we do not separate the Indian Ocean from the Southern Atlantic, but instead separate the Arctic from the Northern Atlantic. A possible explanation for this is that fuzzy clustering, used by them, has a tendency to produce clusters of similar size, and although this is equally true for the~$k$-means algorithm we use, we measure size in terms of the geometry given by the diffusion coordinates~$\Xi_1, \ldots, \Xi_\Lambda$. As a result, we do produce clusters of different sizes as long as their dynamical separation is strong.

We note that the Southern Atlantic, the Southern Pacific and the Indian Ocean are dynamically well connected through the Antarctic Circumpolar Current, this can be seen by the substantial overlap between the blue and yellow clusters close to the Antarctic. As a result, drifters in this region are difficult to classify. By contrast, the Arctic is well separated from the Northern Atlantic, and the Arctic drifters are actually only available for the last 30 of the 60 months.
\subsection{The ABC-flow}

As a last, three-dimensional example, we consider the steady Arnold--Beltrami--Childress flow (short: ABC flow)~\cite{arnol1966topology}, generated by the ODE
\begin{equation*}
\begin{aligned}
\dot{x} &= A\sin(z)\, +\, C\cos(y) \\
\dot{y} &= B\sin(x)\, +\, A\cos(z) \\
\dot{z} &= C\sin(y)\, +\, B\cos(x) 
\end{aligned}
\label{eq:ABCode}
\end{equation*}
on~$\set{X} = [0,2\pi]^3$ (with periodic boundary conditions), with the ``usual'' set of parameters,~$A = \sqrt{3}$, $B=\sqrt{2}$, and~$C=1$. This autonomous system with this set of parameters yields six three-dimensional vortices, which are invariant under the dynamics~\cite{Dombre1986,FrPa09,BlHa14}.
Thus, they are also coherent sets.

The trajectory data we use consists of initial states building a~$40\times 40\times 40$ uniform grid of~$\set{X}$, integrated on a time window of length~$40$, and sampled uniformly in time every~$0.2$ time instances. Thus~$d=3$, $m=64000$, and~$T=201$.

We build the space-time diffusion map transition matrix~$\tilde{\mat{Q}}_{\eps}$ for~$\eps = 0.02$, and extract~$7$ clusters from its six subdominant eigenvectors. The spectrum of~$\tilde{\mat{Q}}_{\eps}$ does not show a clear spectral gap after six eigenvalues for any values of~$\eps$. This is because on the considered time interval, parts of the respective vortices are also coherent.

\begin{figure}[htb]
\centering
\includegraphics[width = 0.4\textwidth]{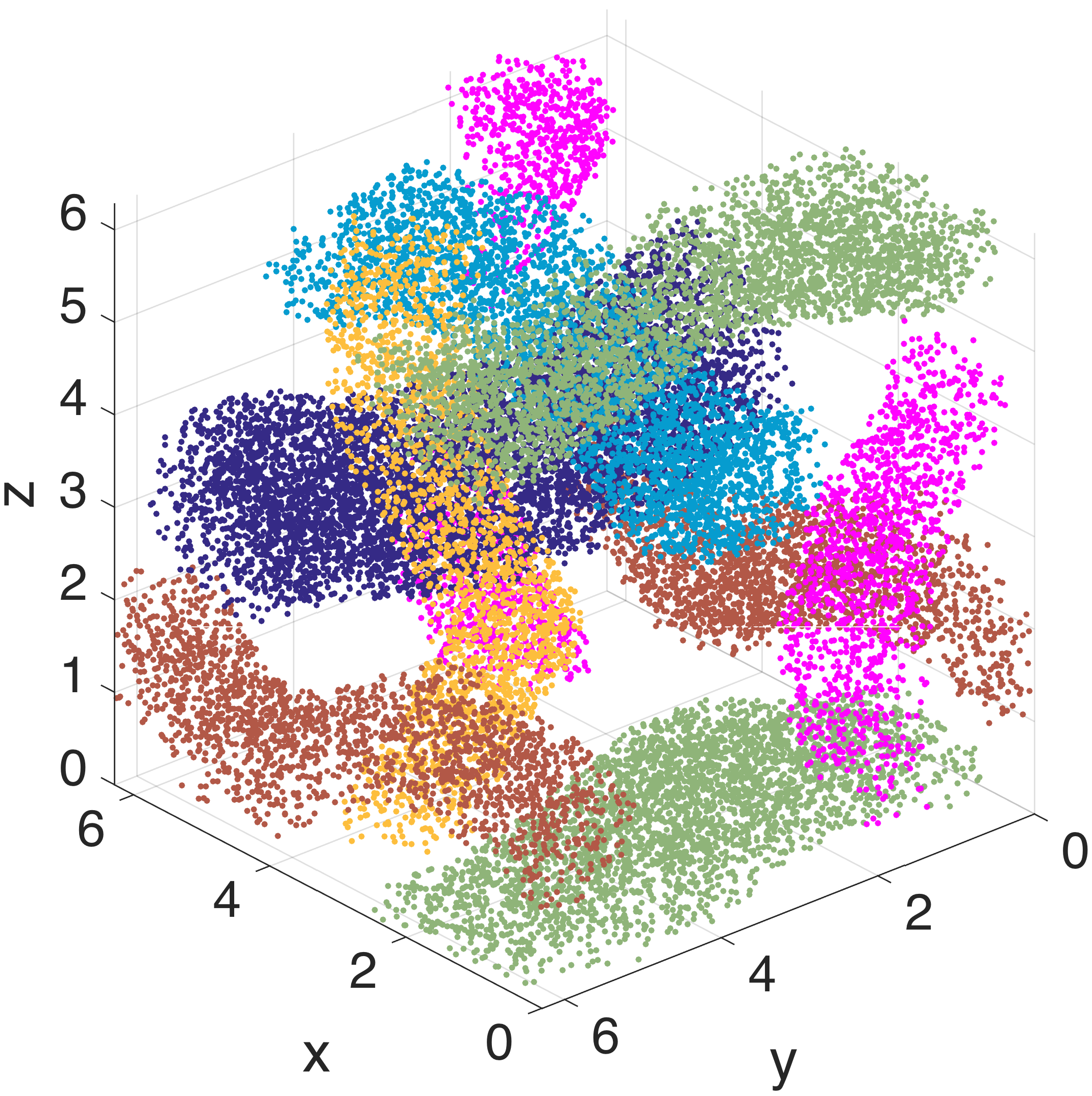}
\qquad
\includegraphics[width = 0.4\textwidth]{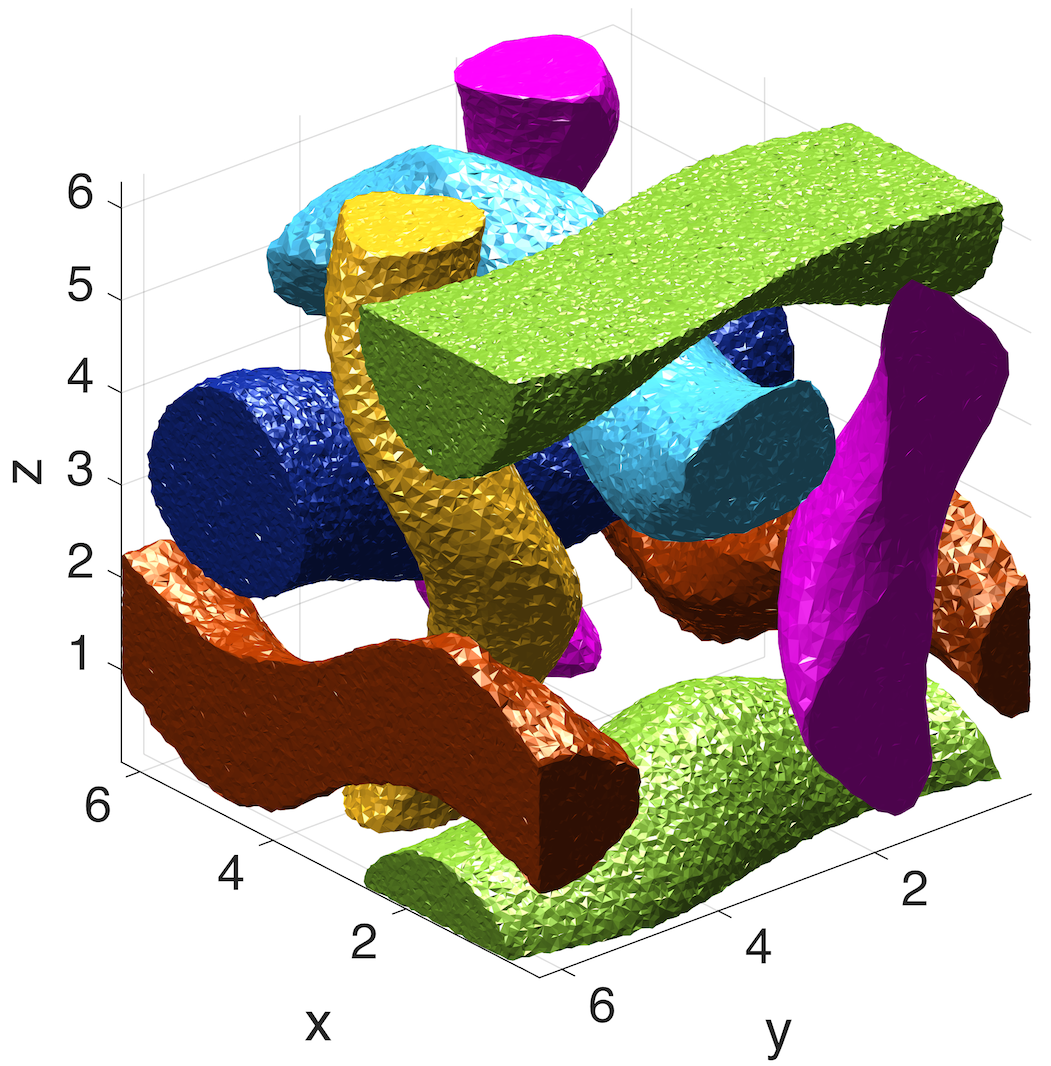}
\caption{Left: the six coherent vortices extracted by a~$7$-clustering of the eigenvectors, using the data points at final time. The seventh cluster, the region between the vortices, is not shown. Right: the boundary of the same six coherent data point sets, computed by Matlab's \texttt{boundary} function.}
\label{fig:ABCclusts}
\end{figure}

Figure~\ref{fig:ABCclusts} shows the clusters that indicate the six invariant vortices. Note that the vortices in this autonomous system do not move in space, hence the clusters look the same at every time slice. The right-hand side of Figure~\ref{fig:ABCclusts} shows the boundaries of the clusters computed by using the data points from all time slices.

\begin{figure}[htb]
\centering
\includegraphics[width = 0.49\textwidth]{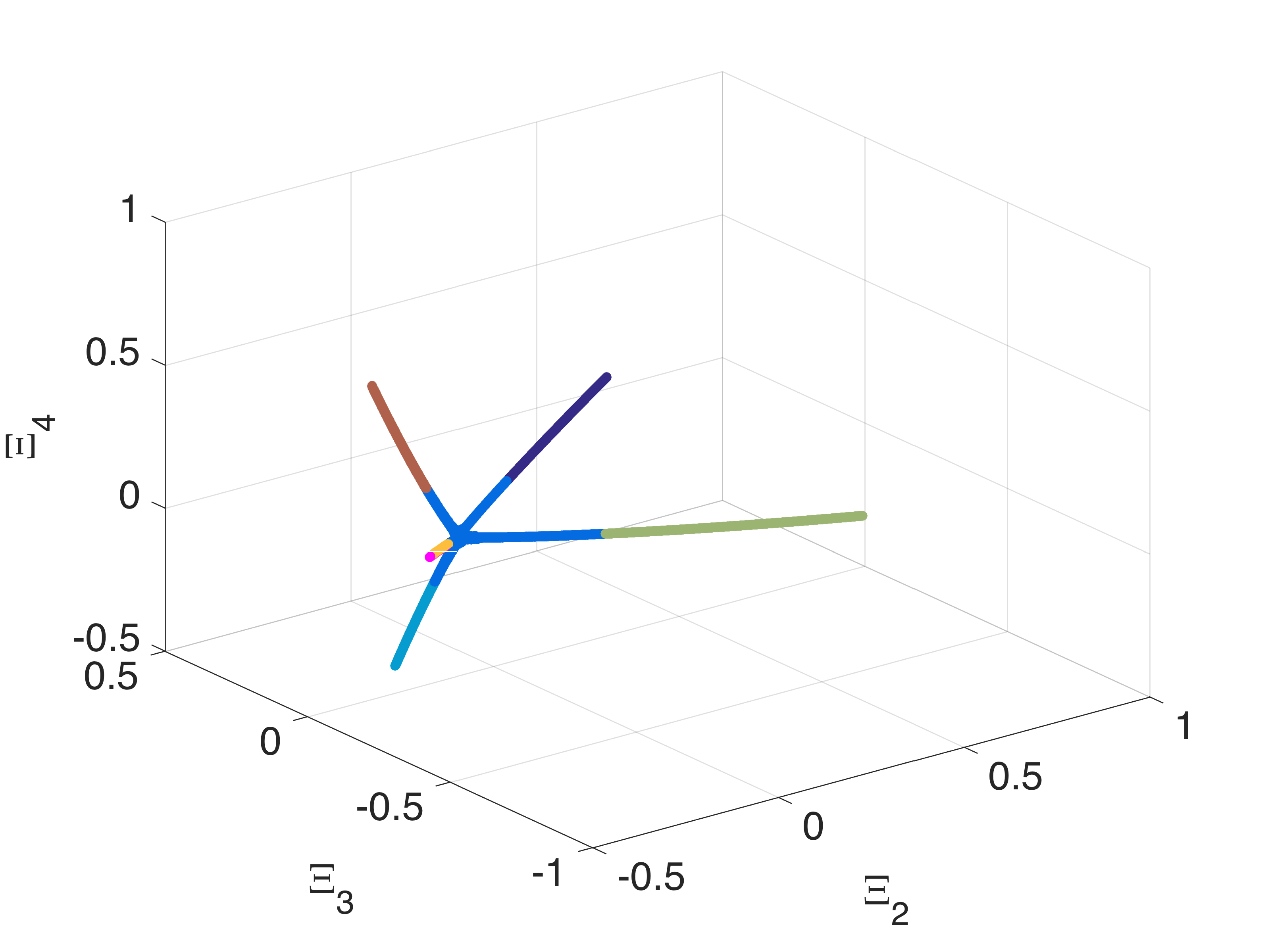}
\hfill
\includegraphics[width = 0.49\textwidth]{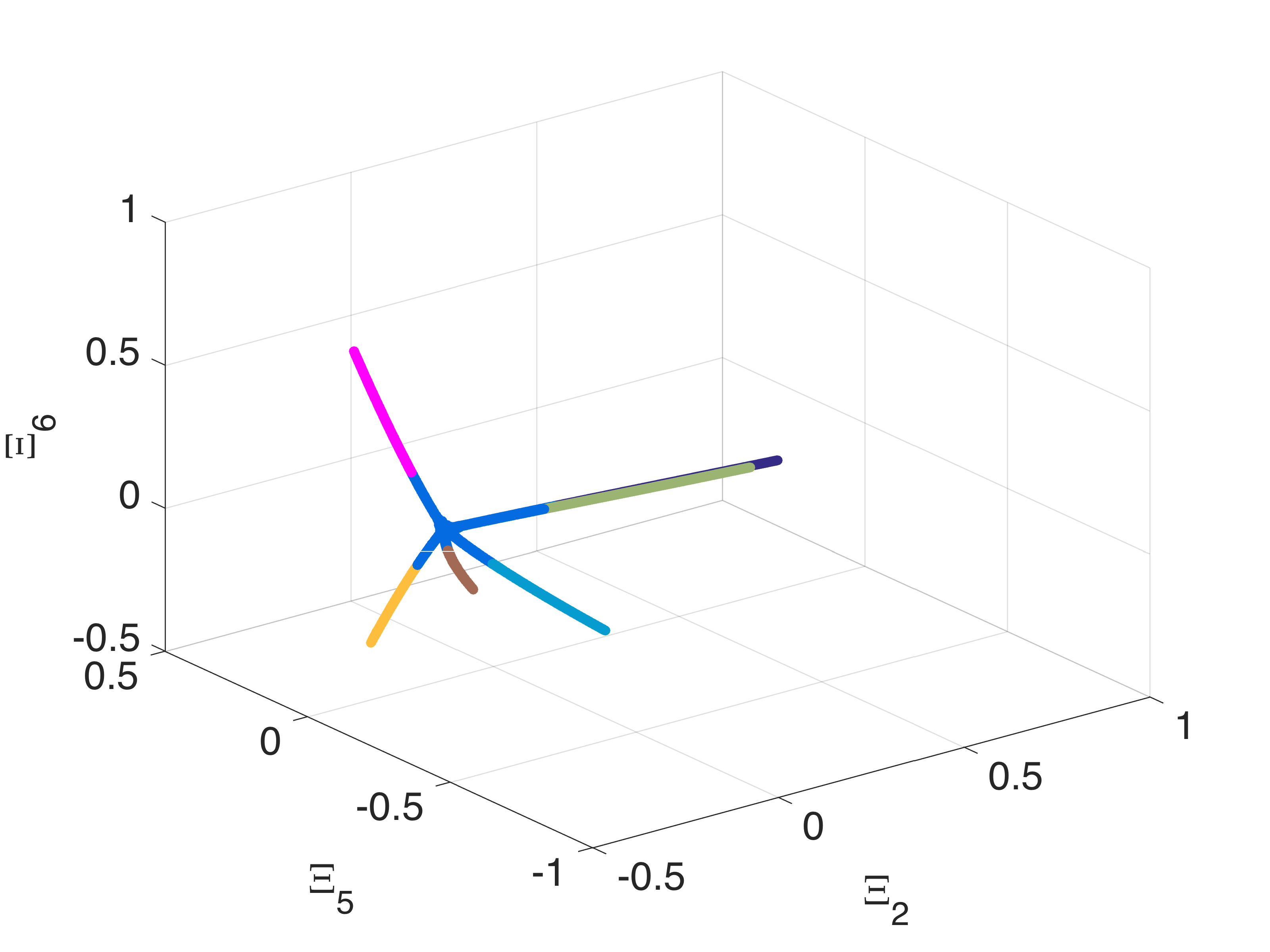}
\caption{Eigenvector-embedding of the data into~$\R^3$ (left:~$\Xi_2$, $\Xi_3$, and~$\Xi_4$; right:~$\Xi_2$,~$\Xi_5$, and~$\Xi_6$), with colors identical to those of the clusters in Figure~\ref{fig:ABCclusts}.}
\label{fig:ABCevals}
\end{figure}

Figure~\ref{fig:ABCevals} shows the embedding of the data  by three different eigenvectors, respectively. Note that the star-shaped geometry indicates that transport between the vortices can only occur through the ``transition region'' between the vortices. This was similar in the Bickley jet example, but with more than one single transition region. However, for the ocean drifters, the topology of continents and ocean basins resulted in a quite different dynamical connectivity pattern; cf~Figure~\ref{fig:drifter_evals}.

\section{Conclusion}

In this article, we provided a data-driven method for the detection of coherent sets. Our main result is Theorem~\ref{thm:main}, which establishes a connection between our method and the ``forward-diffuse-backward'' transfer operator~$\mathcal{T}^*\mathcal{T}$ studied within the analytical framework of coherence~\cite{Froyland2013}. This allows us to give meaning to the dominant eigenfunctions~$\Xi_i$ of~$\mathbf{Q}_\eps$, which represent our main computational output: They are approximations of the respective eigenfunctions of a time-averaged version of~$\mathcal{T}^*\mathcal{T}$. We use the~$\Xi_i$ in two ways: (i) To detect coherent sets via spectral clustering, and (ii) as ``dynamical coordinates'' which can be used to reveal the intrinsic low-dimensional organization of the trajectory data, such as the connectivity structure between clusters.

Coherent sets do not have to have crisp boundaries, e.g.~in the form of some transport barriers. One can enlarge of reduce these sets, by potentially taking small fluctuations in the amount of transport between the set and its surrounding into account. Moreover, in the case of sparse data, or where the underlying dynamics has a stochastic component, ``hard'' assignment of trajectories to coherent sets may not even make sense. In these situations, a ``soft'', fuzzy assignment seems to be more appropriate~\cite{FrPa15}. In future work on non-deterministic systems, we shall analyze this thoroughly.

We based our method on diffusion maps, but we expect that the techniques presented herein can be used to analyze other methods based on the construction of similarity graphs between trajectories in a similar manner. However, we found that using diffusion maps has several important advantages:
\begin{itemize}
\item The diffusion kernel function~$h(x) = \exp(-x)\mathbf{1}_{x\leq r}$ is numerically very well behaved.
\item We need very little a priori knowledge about the system at hand. In fact, we only need a distance function $\|\cdot\|$ which is a good approximation of the actual intrinsic distance \emph{locally}. No knowledge about invariant measures, the global geometry of the state space, or a good set of basis functions is assumed. In all of our numerical examples we just used the Euclidean distance, even though the state spaces considered included spherical and toroidal geometries.
\item Only a single scale parameter~$\eps$ needs to be tuned, and we provided criteria for doing so.
\end{itemize}
These properties indicate that space-time diffusion maps are well suited as an analysis tool for trajectory data generated from a ``black box'' dynamical system. Our numerical experiments suggest robust results even for very sparse and incomplete data.

There are further two possible directions to extend this research. First, one could consider stochastic dynamics. The transfer operator framework used here incorporates this case but the pointwise assertions of Theorem~\ref{thm:main} will have to be replaced by suitable local averages due to the noise in the dynamics. Second, one could consider noisy, incomplete, and even corrupted observations. For example, the data might be of the form $y_t = Ox_t + \eta_t$, where $x_t$ is the true state of the system, $O$ is some linear operator and $\eta_t$ is additive noise. There is evidence that diffusion maps is robust under additive noise \cite{coifman2006}, but if observations are incomplete then Euclidean distances between observations will not represent distances between the underlying states even locally, and one has to resort to other techniques.

\section*{Acknowledgments}
RB was supported by EPSRC Grant No.\ EP/K039512/1. PK was supported by the Einstein Center for Mathematics Berlin (ECMath), project grant CH7. The authors would like to thank Gary Froyland for helpful comments.

\appendix

\section{Proofs}

\subsection{Reversed dynamics and adjoint operator}\label{sec:adjoint}

To see that $\T^*$ is the forward operator of the backward dynamics, let~$\bm x, \bm y$ be random variables with distributions~$\bm x\sim \mu$ and~$\bm y\sim k(\bm x,\cdot)d\ell$; that is, we think of~$\bm y$ as the image of~$\bm x$ under the non-deterministic dynamics~$\Psi$. Then, for any~$\mu$-measurable set~$\set{B}$, and~$\nu$-measurable set~$\set{A}$, we have by Bayes' law for the probability that the time-reversed dynamics ends up in~$\set{B}$ after one step, provided it started in~$\set{A}$, that
\begin{eqnarray*}
\prob [\bm x\in \set{B}\,\vert\, \bm y\in \set{A} ] & = & \frac{\prob [\bm x\in \set{B} ,\ \bm y\in \set{A}]}{\prob [\bm y\in \set{A}]} = \frac{\mu(\set{B})\int_{\set{A}} \T\tfrac{\mathbf{1}_{\set{B}}}{\mu(\set{B})}d\nu}{\nu(\set{A})} = \frac{1}{\nu(\set{A})}\int \mathbf{1}_{\set{A}} \T\mathbf{1}_{\set{B}}\,d\nu\\
& = & \frac{1}{\nu(\set{A})}\int \mathbf{1}_{\set{B}} \T^* \mathbf{1}_{\set{A}}\,d\mu
 =  \int_{\set{B}} \T^* \frac{\mathbf{1}_{\set{A}}}{\nu(\set{A})}\,d\mu\,.
\end{eqnarray*}

\subsection{Diffusion map and diffusion operators} \label{app:diffusion_finite_eps}

We are going to need the following finite $\eps$ version of the result (\ref{eq:Dmaps_eps_convergence}):

\begin{lemma}\label{lemma:diffusion_finite_eps}
Recall from~\eqref{eq:diffusion_operators} that
\[
D_\eps f = \frac{1}{\eps^{d/2}}\int_{\set{M}}k_\eps(\cdot,y)f(y)\dd y, \quad \mathcal{D}_\eps f = \frac{D_\eps(qf)}{D_\eps f}
\]
Assume that $\set{M}$ has no boundary and that $f$ is bounded on $\set{M}$. Then
\begin{itemize}
\item[(i)] $P_{\eps,0}f = \cDep f$.
\item[(ii)] If further $f,q \in C^3(\set{M})$, then
\begin{equation}
\label{eq:diffmaps_finiteeps}
P_{\eps,\alpha}f = \frac{D_\eps\left(q^{1-\alpha}f\right)}{D_\eps q^{1-\alpha}} + \mathcal{O}\left(\eps^2\right).
\end{equation}
In particular,~$P_{\eps,1}f = D_\eps f + \mathcal{O}\left(\eps^2\right)$.
\end{itemize}
\end{lemma}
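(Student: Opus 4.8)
The plan is to evaluate the $m\to\infty$ limit defining $P_{\eps,\alpha}$ in \eqref{eq:Peps} explicitly via the strong law of large numbers \eqref{eq:lim_int}, and then, for (ii), to Taylor-expand the resulting closed-form expression in $\eps$. For part (i) I set $\alpha=0$, so the normalizing factors $k_\eps(x^i)^\alpha,k_\eps(x^j)^\alpha$ are trivial and $p_{\eps,0}(x,x^j)=k_\eps(x,x^j)/\sum_l k_\eps(x,x^l)$. Dividing numerator and denominator of $\sum_j p_{\eps,0}(x,x^j)f(x^j)$ by $m$ and applying \eqref{eq:lim_int} to both gives
\[
P_{\eps,0}f(x)=\frac{\int_\set{M}k_\eps(x,y)f(y)q(y)\dd y}{\int_\set{M}k_\eps(x,y)q(y)\dd y}=\frac{D_\eps(qf)(x)}{D_\eps q(x)}=\mathcal{D}_\eps f(x),
\]
where the prefactor $\eps^{-d/2}$ in $D_\eps$ cancels between numerator and denominator; cf.~\eqref{eq:diffusion_operators}.

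For part (ii) the factor $k_\eps(x)^\alpha$ cancels between $k_\eps^{(\alpha)}(x,x^j)$ and $d_\eps^{(\alpha)}(x)$, leaving
\[
\sum_j p_{\eps,\alpha}(x,x^j)f(x^j)=\frac{\sum_j k_\eps(x,x^j)\,k_\eps(x^j)^{-\alpha}f(x^j)}{\sum_l k_\eps(x,x^l)\,k_\eps(x^l)^{-\alpha}}.
\]
The key input is $\tfrac1m k_\eps(x^j)=\tfrac1m\sum_l k_\eps(x^j,x^l)\to\eps^{d/2}q_\eps(x^j)$ with $q_\eps:=D_\eps q$, \emph{uniformly} in $j$. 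Substituting this, the powers $m^{-\alpha}$ and the factors $\eps^{-d\alpha/2}$ cancel in the ratio, and \eqref{eq:lim_int} applied to the surviving $\tfrac1m$-sums yields the exact limit
\[
P_{\eps,\alpha}f=\frac{D_\eps\!\big(q_\eps^{-\alpha}q\,f\big)}{D_\eps\!\big(q_\eps^{-\alpha}q\big)}.
\]

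It remains to match this with $D_\eps(q^{1-\alpha}f)/D_\eps q^{1-\alpha}$ to within $\mathcal{O}(\eps^2)$. Using the standard diffusion-maps expansion $D_\eps g=g+\eps\,\mathcal{L}g+\mathcal{O}(\eps^2)$ for $g\in C^3$ (change variables $y=x+\sqrt{\eps}u$ and Taylor expand; the odd moments of the rotation-invariant $h$ vanish, pushing the remainder from $\mathcal{O}(\eps^{3/2})$ to $\mathcal{O}(\eps^2)$), and that $q$ is bounded away from $0$ on the compact $\set{M}$, I write $q_\eps^{-\alpha}q=q^{1-\alpha}+\eps\,w+\mathcal{O}(\eps^2)$ for some $w\in C^1(\set{M})$ whose precise form is immaterial. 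Setting $\phi:=q^{1-\alpha}$, $N_0:=D_\eps(\phi f)$, $D_0:=D_\eps\phi$ and applying $D_\eps$ (a positive operator with $D_\eps|g|\le\|g\|_\infty D_\eps\mathbf{1}$, hence preserving $\mathcal{O}(\eps^2)$ bounds) gives
\[
P_{\eps,\alpha}f=\frac{N_0-\eps\,wf+\mathcal{O}(\eps^2)}{D_0-\eps\,w+\mathcal{O}(\eps^2)}=\frac{N_0}{D_0}+\frac{\eps w}{D_0}\Big(\frac{N_0}{D_0}-f\Big)+\mathcal{O}(\eps^2).
\]
The crux is that $N_0/D_0-f=\mathcal{O}(\eps)$, since $D_\eps$ is an approximate identity, so that $N_0=\phi f+\mathcal{O}(\eps)$ and $D_0=\phi+\mathcal{O}(\eps)$ with $\phi$ bounded below. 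This cancellation, engineered by the symmetric $\alpha$-normalization, upgrades the surviving first-order term to $\mathcal{O}(\eps^2)$ and yields $P_{\eps,\alpha}f=D_\eps(q^{1-\alpha}f)/D_\eps q^{1-\alpha}+\mathcal{O}(\eps^2)$. The special case $\alpha=1$ follows by taking $\phi=\mathbf{1}$ and noting $D_\eps\mathbf{1}=1+\mathcal{O}(\eps^2)$ (indeed $D_\eps\mathbf{1}\equiv1$ for flat, boundaryless $\set{M}$, since $\int h(\|u\|^2)\dd u=1$).

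The main obstacle is the passage to the exact limit in part (ii): one must justify replacing the nested sum $\tfrac1m k_\eps(x^j)$ by its continuum value $\eps^{d/2}q_\eps(x^j)$ \emph{uniformly} inside the outer sum, using a uniform law of large numbers together with $q_\eps$ bounded below so the $-\alpha$ power is stable, which is what makes the powers of $m$ cancel cleanly. Conceptually, the delicate point is the $\mathcal{O}(\eps)\!\to\!\mathcal{O}(\eps^2)$ cancellation above: one must resist discarding the first-order term of $q_\eps^{-\alpha}q$ naively, and instead observe that its contribution is multiplied by $N_0/D_0-f=\mathcal{O}(\eps)$.
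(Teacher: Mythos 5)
Your proof is correct, and part (i) is essentially identical to the paper's (divide numerator and denominator by $m$, invoke the law of large numbers, cancel the $\eps^{-d/2}$). For part (ii), however, you take a genuinely different route. The paper's proof never computes the exact infinite-data limit of $P_{\eps,\alpha}$: it simply writes down the first-order expansion of $P_{\eps,\alpha}f$ supplied by the cited Coifman--Lafon result \eqref{eq:Dmaps_eps_convergence}, separately expands $D_\eps(q^{1-\alpha}f)/D_\eps q^{1-\alpha}$ via $D_\eps g = g+\eps\Delta g+\mathcal{O}(\eps^2)$ and the quotient rule, and observes that the two first-order terms coincide. You instead derive the closed-form limit $P_{\eps,\alpha}f = D_\eps\bigl(q_\eps^{-\alpha}qf\bigr)/D_\eps\bigl(q_\eps^{-\alpha}q\bigr)$ with $q_\eps=D_\eps q$, and then show that perturbing $q_\eps^{-\alpha}q$ to $q^{1-\alpha}$ costs only $\mathcal{O}(\eps^2)$ in the ratio, because the first-order correction comes multiplied by $N_0/D_0-f=\mathcal{O}(\eps)$. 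Your approach is more self-contained (it re-derives the mechanism behind the $\alpha$-normalization rather than citing its consequence) and it is in fact the same machinery the paper deploys later, in the proof of Lemma~\ref{lemma:fbdiffmaps}~(iii), where the expansion $q_\eps^{-\alpha}=q^{-\alpha}(1+\alpha\eps(\Delta q/q-\omega))+\mathcal{O}(\eps^2)$ and the quotient expansion $\tfrac{a+\eps c}{b+\eps d}=\tfrac{a}{b}+\eps\tfrac{c}{b}-\eps\tfrac{ad}{b^2}+\mathcal{O}(\eps^2)$ appear verbatim; the paper's proof of the present lemma is shorter because it outsources exactly this computation to \eqref{eq:Dmaps_eps_convergence}. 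Two small bookkeeping caveats in your version, at the same level of informality as the paper's own appendix: the statement \eqref{eq:Dmaps_eps_convergence} as quoted is only a limit (hence a priori an $o(\eps)$ remainder), so the $\mathcal{O}(\eps^2)$ tail you and the paper both use really rests on the sharper Coifman--Lafon estimates; and replacing $\eps D_\eps(w f)$ by $\eps wf+\mathcal{O}(\eps^2)$ tacitly asks for one more derivative of $w$ (hence of $q$, since $w$ involves $\Delta q$) than the stated $C^3$ regularity --- alternatively one can avoid this by keeping $D_\eps(w f)-fD_\eps w=\mathcal{O}(\eps)$, which follows from the vanishing first moment of $h$.
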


\begin{proof}
On the one hand, from~\eqref{eq:Dmaps_eps_convergence} we have
\[
P_{\eps,\alpha}f(x) = f(x) + \eps \left(\frac{\Delta (f q^{1-\alpha})}{q^{1-\alpha}} - \frac{\Delta (q^{1-\alpha})}{q^{1-\alpha}}f\right) + \mathcal{O}(\eps^2).
\]
On the other hand, by using $D_\eps f = f + \eps\Delta f + \mathcal{O}(\eps^2)$ and the product rule, we have
\[
\frac{D_\eps(q^{1-\alpha}f)}{D_\eps q^{1-\alpha}} = f + \eps \left(\frac{\Delta (f q^{1-\alpha})}{q^{1-\alpha}} - \frac{\Delta (q^{1-\alpha})}{q^{1-\alpha}}f\right) + \mathcal{O}(\eps^2).
\]
Thus the left-hand side and right-hand side of~\eqref{eq:diffmaps_finiteeps} are equal up to~$\mathcal{O}(\eps^2)$. To show~$P_{\eps,0}f = \mathcal{D}_\eps f$, note that almost surely
\begin{equation}
\label{eq:lem1_step1}
P_{\eps,0}f(x) = \lim_{m\rightarrow\infty} \frac{\frac{1}{m\eps^{d/2}}\sum_{j=1}^m k_\eps(x,x^j)f(x^j)}{\frac{1}{m\eps^{d/2}}\sum_{k=1}^m k_\eps(x,x^k)} = \frac{D_\eps(fq)(x)}{D_\eps q(x)} = \mathcal{D}_\eps f(x).
\end{equation}
This finishes the proof.
\end{proof}

\subsection{Proof of Lemma \ref{lemma:fbdiffmaps}} \label{app:fbdiffmaps}

\begin{enumerate}[(i)]
\item follows directly from (\ref{eq:Bmatrix}).
\item Recall that by Lemma \ref{lemma:diffusion_finite_eps} (i), $P_{\eps,0}f = \mathcal{D}_\eps f$. Now note that we can write $\mathcal{B}_\eps f(x) = \frac{1}{d_\eps^\infty(x)}P^*_{\eps,0}P_{\eps,0}f(x)$ where
\[
P^*_{\eps,0}f(x) = \frac{1}{\eps^{d/2}}\int k_\eps(x,y)\frac{q(y)}{q_\eps(y)} f(y)\dd y 
\]
is the adjoint of $P_{\eps,0}$. This follows by inspecting (\ref{eq:Bkernel_infty}) and (\ref{eq:bInf}). Further note that $d_\eps^\infty = P^*_{\eps,0} \mathbf{1}$. Set $h = \mathbf{P}_{\eps,0}f$ and estimate
\begin{align*}
\left| \mathbf{B}_\eps f(x^i) - \mathcal{B}_\eps f(x^i)\right| &\; = \left|\left(\mbox{diag}(\mathbf{P}^T_{\eps,0} \mathbf{1})\right)^{-1}\mathbf{P}_{\eps,0}^T\mathbf{P}_{\eps,0}f(x^i) - \frac{1}{d_\eps^\infty(x^i)} P_{\eps,0}^* P_{\eps,0}f(x^i) \right| \\
& \; \leq \left|\left(\mathbf{P}^T_{\eps,0} \mathbf{1}(x^i)\right)^{-1} - (P_{\eps,0}^*\mathbf{1}(x^i))^{-1}\right| \mathbf{P}_{\eps,0}^T\mathbf{P}_{\eps,0}f(x^i)  \\
 & \quad + \frac{1}{P_{\eps,0}^*\mathbf{1}(x^i)} \left| \mathbf{P}_{\eps,0}^T h(x^i) - P_{\eps,0}^* h(x^i)\right| + \frac{1}{P_{\eps,0}^*\mathbf{1}(x^i)} \left| P^*_{\eps,0}( \mathbf{P}_{\eps,0}f - P_{\eps,0}f)(x^i)\right|
\end{align*}
In \cite{Hein2005}, it was shown that $|\mathbf{P}_{\eps,0}f(x^i) - P_{\eps,0}f(x^i)| = \mathcal{O}(\eps^{-d/4}m^{-1/2})$ holds uniformly. $|\mathbf{P}_{\eps,0}^Tf(x^i) - P^*_{\eps,0}f(x^i)| = \mathcal{O}(\eps^{-d/4}m^{-1/2})$ follows, and since $\mathbf{P}_{\eps,0}$ and $P_{\eps,0}$ are bounded, we have
\[
\left| \mathbf{B}_\eps f(x^i) - \mathcal{B}_\eps f(x^i)\right| = \mathcal{O}(\eps^{-d/4}m^{-1/2}) + \frac{1}{P^*_\eps\mathbf{1}(x^i)} \mathcal{O}(\eps^{-d/4}m^{-1/2}).
\]
 (ii) follows by using the Taylor expansion $q_\eps = q + \mathcal{O}(\eps)$, which holds uniformly on $\set{M}$ \cite{coifman2006} and gives $P_\eps^* \mathbf{1} = \mathbf{1} + \mathcal{O}(\eps)$.
\item As already noted in (ii), we can write $\mathcal{B}_\eps f(x) = \frac{1}{d_\eps^\infty(x)}P^*_{\eps,0}g(x)$ with $g := \mathcal{D}_\eps f$. We use the expansion $q^{-\alpha}_\eps = q^{-\alpha}\left(1 + \alpha\eps\left(\frac{\Delta q}{q} - \omega\right)\right) + \mathcal{O}(\eps^2)$ which holds uniformly on $\set{M}$ with $\omega:\R^n\rightarrow \R$ being a potential term depending on the embedding of $\set{M}$, see \cite{coifman2006}. With the shorthand $h_q := \frac{\Delta q}{q} - \omega$, (\ref{eq:dinfty}) yields
\[
d_\eps^\infty = \mathbf{1} + \eps D_\eps h_q + \mathcal{O}(\eps^2)
\]
uniformly on $\set{M}$. Using the same expansion on $P^*_{\eps,0}g$ gives $P^*_{\eps,0}g = D_\eps\left[ g + \eps h_q g\right] + \mathcal{O}(\eps^2)$, and finally
\[
\mathcal{B}_\eps f = \frac{D_\eps g + \eps D_\eps[h_q g]}{\mathbf{1} + \eps D_\eps h_q} + \mathcal{O}(\eps^2)
\]
uniformly on $\set{M}$, where the quotient is taken pointwise. Using $\frac{a+\eps c}{b + \eps d} = \frac{a}{b} + \eps \frac{c}{b} - \eps \frac{ad}{b^2} + \mathcal{O}(\eps^2)$ this can be rewritten as
\[
\mathcal{B}_\eps f = D_\eps g + \eps \left( D_\eps[h_q g] - D_\eps [h_q] D_\eps [g]\right) + \mathcal{O}(\eps^2)
\]
uniformly on $\set{M}$. Now note that since~$D_\eps = I + \mathcal{O}(\eps)$, where~$\mathcal{O}(\eps)$ depends on the first derivative of the argument of~$D_{\eps}$, the term~$D_\eps[h_q g] - D_\eps [h_q] D_\eps [g]$ is~$\mathcal{O}(\eps)$ uniformly on $\set{M}$ \cite[Lemma~8]{coifman2006}. This together with~$g = \mathcal{D}_\eps f$ shows (iii).

\item From (ii), we know $|\mathbf{B}_\eps f(x^i) - \mathcal{B}_\eps f(x^i)| = \mathcal{O}(\eps^{-d/4}m^{-1/2})$ and  $|\mathbf{B}^T_\eps f(x^i) - \mathcal{B}_\eps^* f(x^i)| = \mathcal{O}(\eps^{-d/4}m^{-1/2})$. From (iii), $|\mathcal{B}_\eps f(x^i) - \mathcal{B}^*_\eps f(x^i)| = \mathcal{O}(\eps^2)$. Then, for any bounded $f$,
\begin{align*}
\left| \mathbf{B}_\eps f(x^i) - \mathbf{B}_\eps^T f(x^i) \right| & \; \leq \left|\mathbf{B}_\eps f(x^i) - \mathcal{B}_\eps f(x^i)\right| + \left| \mathcal{B}_\eps f(x^i) - \mathcal{B}_\eps^* f(x^i)\right| + \left| \mathbf{B}^T_\eps f(x^i) - \mathcal{B}_\eps^* f(x^i)\right|\\
&\; = \mathcal{O}(\eps^{-d/4}m^{-1/2}) + \mathcal{O}(\eps^2)
\end{align*}
uniformly on $\set{M}$. Since this holds for any bounded $f$ and for any $x^i$, $i=1,\ldots, m$, the result follows for any compatible matrix norm $\| \cdot \|$.
\item If $q_\eps = q$, then $d_\eps^\infty = \mathbf{1}$, and $P^*_{\eps,0}f = D_\eps f$. Then $\mathcal{B}_\eps f(x) = \frac{1}{d_\eps^\infty(x)}P^*_{\eps,0}P_{\eps,0}f(x) = D_\eps\mathcal{D}_\eps f(x)$ follows for any $x\in\set{M}$.

\item By (iv), $\lim_{\eps \rightarrow 0}\frac{1}{\eps}(\mathcal{B}_\eps f - f) = \frac{d}{d\eps} (D_\eps \mathcal{D}_\eps f) |_{\eps = 0} = \frac{d}{d\eps} D_\eps f|_{\eps = 0} + \frac{d}{d\eps}\mathcal{D}_\eps f|_{\eps = 0}$. By Lemma \ref{lemma:diffusion_finite_eps} (ii) and (\ref{eq:Dmaps_eps_convergence}),
\[
\frac{d}{d\eps} D_\eps f|_{\eps = 0} = \frac{d}{d\eps} P_{\eps,1}f|_{\eps = 0} = \frac14\Delta f
\]
and
\[
\frac{d}{d\eps} \mathcal{D}_\eps f|_{\eps = 0} = \frac{d}{d\eps} P_{\eps,0}f|_{\eps = 0} = \frac{\Delta(fq)}{4q} - \frac{\Delta q}{4q}f = \frac14\Delta f + \frac12\nabla q \cdot \nabla f.
\]
Thus
\[
\lim_{\eps \rightarrow 0}\frac{1}{\eps}(\mathcal{B}_\eps f - f) = \frac12\Delta f +  \frac12\nabla q \cdot \nabla f = \frac12 q^{-1}\nabla\cdot (q\nabla f).
\]
\end{enumerate}

%

\subsection{Proof of Theorem \ref{thm:main}} \label{app:mainthm}

We have, by the definition of $\mathbf{Q}_{\eps}$,
\[
\sum_{j=1}^m \mathbf{Q}_\eps(i,j)f(x^j) = \frac{1}{T}\sum_{t\in I_t}\sum_{j=1}^m b_{\eps,t}(\Phi_tx^i,\Phi_tx^j)f(x^j).
\]
For one fixed $t\in I_t$, we label the images $\{\Phi_tx^1,\ldots, \Phi_t x^j\}$ with $\{y^1,\ldots, y^l\}$, taking into account possible duplicates since $\Phi_t$ is not assumed to be injective (if it is, then $l=m$). Define
\[
\hat g(y_k) := \frac{\sum_{j=1}^m \delta(y^k - \Phi_t x^j)f(x^j)}{\sum_{j=1}^m \delta(y^k - \Phi_t x^j)}.
\]
Then
\begin{equation}\label{eq:hatg}
\sum_{j=1}^m b_{\eps,t}(\Phi_t x^i,\Phi_tx^j)f(x^j) = \sum_{k=1}^l b_{\eps,t}(\Phi_t x^i , y^k) d(y^k)\hat g(y^k).
\end{equation}
where $d(y^k):= \sum_{j=1}^m \delta(y^k = \Phi_t x^j)$ counts the multiplicities of the $y^k$'s. Define further $g := \mathcal{P}_t f$. Now the proof has 3 steps:
\begin{enumerate}
\item For any test function $h$, we have $\left| \sum_k h(y^k)d(y^k)\hat g(y^k) - \langle h,g\rangle_{q_t}\right| = \mathcal{O}(m^{-1/2})$. This follows since
\[
\sum_{k=1}^l h(y^k)d(y^k)\hat g(y^k) = \sum_{k=1}^l \sum_{j=1}^m h(y^k) \delta(y^k - \Phi_tx^j) f(x^j) = \sum_{j=1}^m h(\Phi_tx^j)f(x^j),
\]
and the right-hand side is an unbiased Monte Carlo estimator of $\int h(\Phi_t x) f(x) q_0(x) \dd x = \langle U_t h,f\rangle_{q_0} = \langle h, \mathcal{P}_t f\rangle_{q_t}$ with variance $\mathcal{O}(m^{-1})$.
\item $\left|\langle h, g\rangle_{q_t} - \sum_k h(y^k)d(y^k) g(y^k)\right| = \mathcal{O}(m^{-1/2})$. This follows since the data points $\{y^k\}_{k=1}^l$, taking into account their multiplicities $d(y^k)$, are i.i.d.~$q_t$-distributed.
\item The kernel~$b_{\eps}$ from~\eqref{eq:Bmatrix} reads explicitly as
\begin{equation}\label{eq:Bkernel}
b_\eps(x,y) = \frac{1}{d_\eps(x)}\sum_{i=1}^m \frac{k_\eps(x,x^i)k_\eps(x^i,y)}{k_\eps(x^i)^2}, \quad d_\eps(x) := \sum_{i=1}^m \frac{k_\eps(x,x^i)}{k_\eps(x^i)}\,.
\end{equation}
Observe that~$b_{\eps,t}(x,y)$ from~\eqref{eq:bt} is exactly equal to~$b_\eps(x,y)$ in~\eqref{eq:Bkernel}, but with the~$q$-distributed samples~$x^i$ replaced by~$q_t$-distributed samples~$\Phi_t x^i$. By Lemma \ref{lemma:fbdiffmaps} (ii), we thus have
\begin{equation}
\label{eq:Bepst}
\left| \sum_k b_{\eps,t}(x, y^k)d(y^k)g(y^k) - \mathcal{B}_{\eps,t}g(x)\right| = \mathcal{O}(\eps^{-d/4}m^{-1/2})
\end{equation}
where $\mathcal{B}_{\eps,t}f(x)$ is defined as
\[
\mathcal{B}_{\eps,t}f(x) = \int_\X b^\infty_{\eps,t}(x,y)q_t(y) f(y) \dd y,
\]
and $b_{\eps,t}^\infty$ is given by the same formula as~$b_{\eps}^{\infty}$ in~\eqref{eq:bInf} and~\eqref{eq:dinfty}, with~$q$ replaced by~$q_t$ everywhere.
\end{enumerate}
Now for the test function $h(\cdot) = \mathcal{B}_{\eps,t}(x,\cdot)$, the steps 1.-3.\ allow the estimate
\begin{align*}
\left| \sum_k h(y^k)d(y^k)\hat g(y^k) - b_{\eps,t}g(x)\right| & \; \leq \left| \sum_k h(y^k)d(y^k)\hat g(y^k) - \langle h,g\rangle_{q_t}\right|  + \left|\langle h, g\rangle_{q_t} - \sum_k h(y^k)d(y^k) g(y^k)\right| \\
& \; + \left| \sum_k h(y^k)d(y^k)g(y^k) - \mathcal{B}_{\eps,t}g(x)\right| \\
& \; = \mathcal{O}(m^{-1/2}) + \mathcal{O}(m^{-1/2}) + \mathcal{O}(\eps^{-d/4}m^{-1/2}).
\end{align*}
But in view of the definition of $g$, we have $\mathcal{B}_{\eps,t} g(\Phi_tx^i) = U_t \mathcal{B}_{\eps,t}\mathcal{P}_t f(x^i)$. By using (\ref{eq:hatg}), we can rewrite the estimate above as
\[
\left| \sum_{j=1}^m b_{\eps,t}(\Phi_t x^i,\Phi_tx^j)f(x^j) - U_t \mathcal{B}_{\eps,t}\mathcal{P}_t f(x^i)\right| = \mathcal{O}(m^{-1/2}) + \mathcal{O}(\eps^{-d/4}m^{-1/2})
\]
Finally, Property (iii) of Lemma \ref{lemma:fbdiffmaps}, $\mathcal{B}_{\eps,t}f = D_\eps \mathcal{D}_{\eps,t}f + \mathcal{O}(\eps^2)$, finishes the proof. $\blacksquare$

\small

\bibliographystyle{plain}
\bibliography{References}

\end{document}